\def\DATE{April 15, 2014}
\providecommand\@dotsep{5}
\def\listtodoname{List of Todos}
\def\listoftodos{\@starttoc{tdo}\listtodoname}
\newtheorem{theorem}{Theorem}
\newtheorem*{Principle}{Principle}
\newtheorem{lemma}[theorem]{Lemma}
\newtheorem{proposition}[theorem]{Proposition}
\theoremstyle{definition}
\newtheorem{example}[theorem]{Example}
\newtheorem{remark}[theorem]{Remark}
\newtheorem{definition}[theorem]{Definition}
\def\Triang{\underline{{\mathcal T}ri}}
\def\Arc{\underline{{\mathcal A}rc}}
\def\Wind{\underline{{\mathcal S}ur}}
\def\Mult{{\rm coProd}}
\def\catC{{\EuScript C}}
\def\and{\ \hbox { and } \ }
\def\CC{{\mathfrak C}}
\def\Lex{{\rm Lex}}
\def\nsF{{\underline{F}}}
\def\({(\hskip -.15em(}    \def\){)\hskip -.15em)}
\def\bl{\big(\hskip -.2em\big(}    \def\br{\big)\hskip -.2em\big)}
\def\Des{{\rm Des}}
\def\card{{\rm card}}
\def\Span{{\rm Span}}
\def\nsm{\ns\ modular}
\def\nsBox{{\underline{\Box}}}
\def\Iso{{\tt Iso}}
\def\sqplus{{\hskip -.1em + \hskip .1em}}
\def\@evenfoot{\rule{0pt}{20pt}[\DATE] \hfill [{\tt \jobname.tex}]}
\def\@oddfoot{\rule{0pt}{20pt}{[\tt \jobname.tex}]\hfill [\DATE]}
\def\Gammacat#1#2{{\fatGamma}\(#2;#1\)}
\def\fatGamma{{\mathbf \Gamma}}
\def\Flag{{\it Flag}}\def\edge{{\it edge}}\def\leg{{\it Leg}}
\def\colim#1{\mathop{{\rm colim}}%
             \limits_{\rule{0em}{1em}\mbox{\scriptsize $#1$}}}
\def\nsP{{\underline{\oP}}}
\def\Cyc{{\tt Cyc}}
\def\Free{{\underline{\mathbb M}}}\def\Freesym{{{\mathbb M}}}
\def\sfModtriple{\Free}
\def\redukce#1{\vbox to .3em{\vss\hbox{#1}}}
\def\rediso{{\ \redukce{$\stackrel \cong\lra$}\ }}
\def\lra{{\longrightarrow}}
\def\Fin{{\tt Fin}}
\def\bfk{{\mathbb k}}
\def\sfLeg{{\sf Leg}}
\def\nsMod{{\underline{\rm Mod}}}
\def\nsModMod{{{\tt NsModMod}}}
\def\ModMod{{{\tt ModMod}}}
\def\cupred{{\hskip .1em \cup \hskip .10em}}
\def\sfD{{\sf D}}
\def\otred{{\hskip -.01em \otimes \hskip -.01em}}
\def\nsAss{\underline{{\mathcal A}{\it ss}}}
\def\Com{\mbox{{$\mathcal C$}\hskip -.3mm {\it om}}}
\def\Ass{{\mathcal A}{\it ss}}
\def\ttM{{\tt M}}\def\Set{{\tt Set}}\def\id{{\it id}}
\newcommand{\oP}{{\mathcal{P}}}
\newcommand{\ooo}[2]{\sideset{_{#1}}{_{#2}}{\mathop{\circ}}}
\def\stt#1{{\{#1\}}}\def\sfS{{\sf S}}
\def\bbN{{\mathbb N}}\def\ot{\otimes}
\def\k-Mod{\hbox{\tt $\bfk$-Mod}}
\def\MultCyc{{\tt MultCyc}}
\def\Sym{{\rm Sym}}\def\Mod{{\rm Mod}}
\def\NsCycOp{{\tt NsCycOp}}
\def\CycOp{{\tt CycOp}}\def\NsModOp{{\tt NsModOp}}\def\ModOp{{\tt ModOp}}
\def\Rada#1#2#3{#1_{#2},\dots,#1_{#3}}
\def\bbN{{\mathbb N}}
\def\Leg{{\it Leg\/}}
\def\Vert{{\it Vert\/}}
\def\ns{{non-$\Sigma$}}
\def\calP{{\mathcal P}}
\title{Modular envelopes, OSFT and nonsymmetric (non-$\Sigma$) modular operads}
\author{Martin Markl}
\thanks{The second author was supported by the Eduard \v Cech
  Institute P201/12/G028 and RVO: 67985840.}
\keywords{Open string, surface, modular completion, terminal object}
\subjclass[2000]{18D50, 32G15, 18D35}
\begin{document}

\allowdisplaybreaks
\bibliographystyle{plain}

\begin{abstract}
  Our {\bf aim\/} is to introduce and advocate \ns\ (non-symmetric)
  modular operads.  While ordinary modular operads were inspired by
  the structure of the moduli space of stable complex curves, \ns\
  modular operads model surfaces with open strings outputs.  An
  immediate {\bf application} of our theory is a short proof that the
  modular envelope of the associative operad is the linearization of
  the terminal operad in the category of \ns\ modular operads. This
  gives a succinct description of this object that plays an important
  r\^ole in open {\bf string field theory\/}. We also sketch further
  perspectives of the presented approach.
\end{abstract}

\maketitle

\vskip 1em

\begin{center}
Author's addresses:
\end{center}

\small
\begin{center}
{\sc Math.\ Inst.\ of the Academy, {\v Z}itn{\'a} 25,
         115 67 Prague 1, The Czech Republic}
\\ and \\
{\sc MFF UK, 186 75 Sokolovsk\'a 83, Prague 8, The Czech Republic}
\end{center}
\begin{center}
email:
{\tt  markl@math.cas.cz}
\end{center}

\tableofcontents

\section*{Introduction}

Operads have their \ns\ (non-symmetric) versions obtained by
forgetting the symmetric group actions. Likewise, for the \ns\ version
of cyclic operads one requires only the actions of the cyclic
subgroups of the symmetric groups, see e.g.\ definitions~II.1.4,
II.1.14 and Section~II.5.1 of \cite{markl-shnider-stasheff:book}. In
both cases we thus demand less structure. As we explain in
Example~\ref{dead_end} below, this straightforward approach fails for
modular operads whose \ns\ versions have been a mystery so~far.

There are, fortunately, some clues and inspirations, namely modular
envelopes of cyclic operads, introduced under the name modular
completions by the author in~\cite[Definition~2]{markl:la}. The
modular envelope $\Mod(*_C)$ of the terminal cyclic operad $*_C$ in
the category $\Set$ of sets turned out to be the terminal modular
operad in $\Set$, see~\cite[page~382]{markl:la}. Notice that the
linearization $\Span(*_C)$ of $*_C$ is the cyclic operad $\Com$ for
commutative associative algebras.

The modular envelope $\Mod(\underline*_C)$ of the terminal \ns\ cyclic
$\Set$-operad, described much later 
in~\cite{chuang-lazarev:dual-fey,doubek:modass}, turned out to be
surprisingly complex. (The authors
  of~\cite{chuang-lazarev:dual-fey,doubek:modass} worked with the
linearized version, i.e.\ with the cyclic operad $\Ass$ for associative
  algebras, but the linear structure is irrelevant here as everything
  important happens inside $\Set$.)  A derived version of this modular
  envelope was studied in \cite{Costello,Giansiracusa}.   Guided by
the example of  $\Mod(*_C)$, one would expect
$\Mod(\underline*_C)$ to be the (symmetrization of) the terminal
$\Set$-operad in the conjectural category of \ns\ modular operads.
The description of $\Mod(\underline*_C)$ given in~\cite[Theorem~3.1]{doubek:modass}
and its relation to the moduli space of Riemann surfaces with
boundaries~\cite[Theorem~3.7]{chuang-lazarev:dual-fey} 
therefore gives some feeling what \ns\ modular
operads should be.

Another natural requirement is that the conjectural category of \ns\
modular operads should fill the left bottom corner of the diagram
\[
\xymatrix@C = 3.5em@R = 3em
{\NsCycOp  \ar@/^.7em/[r]^{\Sym} \ar@/_/[d]_\nsMod  & \CycOp \ar@/^/[d]^{\Mod} \ar@/^.7em/[l]_\Des
\\
??   \ar@/_/[u]_\nsBox  \ar@/^.7em/[r]^{\Sym} &  \ModOp \ar@/^/[u]^\Box \ar@/^.7em/[l]_\Des
}
\]
in which $\Box: \ModOp \to \CycOp$ is the forgetful functor, $\Mod:
\CycOp \to \ModOp$ the modular
completion~\cite{markl:la} (known today as the modular envelope), $\Des:
\CycOp \to \NsCycOp$ the forgetful 
functor (the desymmetrization, not to be mistaken with
  Batanin's desymmetrization of~\cite{batanin:globular})
and the symmetrization
$\Sym: \NsCycOp \to \CycOp$ its left adjoint.

The category of (ordinary) modular operads contains the category of
cyclic operads as the full subcategory of operads concentrated in genus
$0$. Requiring the same from the category of \ns\ modular operads
leads to the notion of geometricity that does not have analog in the
symmetric world.

Our {\bf aim\/} is to introduce and advocate the notion of \ns\
(non-symmetric) modular operads. The main {\bf definitions} are
Definitions~\ref{opet_podleham} and~\ref{sec:un-biased-definition},
and the main {\bf result} is
isomorphism~(\ref{Jarka_stale_bez_prace-a}) of
Theorem~\ref{sec:modular-envelopes-6}. As an immediate {\bf
  application} we give a short,  elementary proof of the description
of the modular envelope $\Mod(\Ass)$ of the associative operad given
in~\cite{doubek:modass,chuang-lazarev:dual-fey}.

 \subsection*{Perspectives.} 
It turns out that the elements of the modular
envelope $\Mod(*_C)$ of the terminal cyclic $\Set$ operad $*_C$
describe isomorphism classes of oriented surfaces with
holes, and likewise the \ns\
modular envelope $\nsMod(\underline*_C)$ of the terminal \ns\ cyclic
$\Set$-operad describe isomorphism  classes of oriented surfaces with
teethed holes. These geometric objects describe interactions in closed
and open string field theory. Very crucially,  
Theorem~\ref{sec:modular-envelopes-6} asserts that both  $\Mod(*_C)$ and
$\nsMod(\underline*_C)$ are the {\bf terminal\/} objects in an appropriate
category of modular operads.

In Section \ref{lazarev} we consider the operad $*_D$ describing
associative algebras with an involution. It is a cyclic dihedral
operad in the sense of \cite[Section~3]{markl-remm:JA06} that equals
the  {M\"obiusisation\/} \cite[Definition
3.32]{braun} of the terminal cyclic operad $*_C$.  By
\cite[Theorem~3.10]{braun}, its modular envelope $\Mod(*_D)$ consist of
isomorphism classes of {\em non-oriented\/} surfaces with teethed
holes. We believe that there exists another version of modular
operads (say dihedral modular operads) such that $\Mod(*_D)$ is the
(symmetrization of the) 
{\bf terminal\/} modular operad of this~type.

We hope that similar reasoning applies to other objects, such
as the surfaces describing interactions in open-closed string field
theory that may include also D-branes, or even higher-dimensional
manifolds. In each case the corresponding type of modular operad
should reflect how a geometric object is composed from simper pieces,
e.g.\ pair of pants in the case of closed string field theory. 
We are lead to formulate

\begin{Principle}
For a large class of geometric objects there exists a version of
modular operads such that the set of isomorphism classes of these
objects is the terminal modular $\Set$-operad of a given type.
\end{Principle}

Several steps in this direction have already been made
in~\cite{Dyc-Kap} where various generalizations of dihedral,
quaternionic and other generalizations of cyclic structures based on
crossed simplicial groups of Krasauskas \cite{krasauskas:LMS87} and
Fiedorowicz-Loday \cite{fiedorowicz-loday:TAMS91} were studied, and their
relation to structured surfaces was clarified.

The nature of the above principle is similar to the 
cobordism hypothesis~\cite{freed} as they both describe objects of
geometric nature by purely categorial means, see again \cite{Dyc-Kap}
where the relation to the structured cobordism hypothesis in dimension
$2$ was explicitly formulated, cf.\ also Remark~4.3.7 of~\cite{Dyc-Kap}.

\subsection*{Notations and conventions} 
Throughout the paper, $\ttM = (\ttM,\ot,1)$ will stand for a complete and
cocomplete, possibly enriched, symmetric monoidal category, with the
initial object $0 \in \ttM$. Typical examples will be $\k-Mod$, the
category of modules over a commutative unital ring $\bfk$, or the cartesian
category $\Set$ of sets. By $\Span : \Set \to \k-Mod$ we denote the
$\bfk$-linear span, i.e.\ the left adjoint to the forgetful functor $\k-Mod
\to  \Set$.
The glossary of categories 
introduced and used throughout the paper is given in
Figure~\ref{ten_externi_disk_je_smejd.}, various forgetful functors
and their adjoints are listed in Figure~\ref{Jaruska_bez_prace}.

\begin{figure}
\[
\def\arraystretch{1.2}
\begin{array}{rl}
\Set,&\mbox {the cartesian monoidal category of sets,}
\cr
$\k-Mod$,&\mbox {the category of $\bfk$-modules,}
\cr
\Fin,&\mbox {the category of finite sets,}
\cr
\Cyc,&\mbox {the category of finite cyclically ordered sets,}
\cr
\MultCyc,&\mbox {the category of finite multicyclically ordered sets,}
\cr
\ModMod,&\mbox {the category of modular modules,}
\cr
\nsModMod,&\mbox {the category of \ns\ modular modules,}
\cr
\CycOp,& \mbox {the category of cyclic operads,}
\cr
\NsCycOp,& \mbox {the category of \ns\ cyclic operads,}
\cr
\ModOp,& \mbox {the category of modular operads,}
\cr
\NsModOp,& \mbox {the category of \ns\ modular operads,}
\cr
\Gammacat g{\sfS}, & \mbox {the category of \ns\ modular graphs.}
\end{array}
\]
\caption{\label{ten_externi_disk_je_smejd.}Notation of categories.}
\end{figure}

\begin{figure}
\[
\def\arraystretch{1.2}
\begin{array}{|c|c|}
\hline
\mbox {forgetful functor:}  & 
\mbox{\rule{20pt}{0pt}left adjoint:}\rule{20pt}{0pt}
\\
\hline\hline
\Box : \ModOp \longrightarrow \CycOp & \Mod : \CycOp \longrightarrow \ModOp
\\
\nsBox : \NsModOp \longrightarrow \NsCycOp & \nsMod : \NsCycOp \longrightarrow \NsModOp
\\
\Des : \CycOp  \longrightarrow \NsCycOp & \Sym:  \NsCycOp \longrightarrow  \CycOp
\\
\Des : \ModOp  \longrightarrow \NsModOp & \Sym:  \NsModOp \longrightarrow  \ModOp
\\
F: \ModOp \longrightarrow \ModMod  & \Freesym: \ModMod  \longrightarrow \ModOp
\\
\nsF: \NsModOp \longrightarrow \nsModMod  & \Free: \nsModMod  \longrightarrow \NsModOp
\\
U: \k-Mod \longrightarrow \Set  & \Span:\Set    \longrightarrow  \k-Mod
\\
\hline
\end{array}
\]
\caption{\label{Jaruska_bez_prace}
Forgetful functors and their left adjoints.}
\end{figure}

An order (without an adjective) in this paper will always be a linear
order of a finite set. By $*$ we denote a chosen one-point set.

\subsection*{Assumptions}
We assume certain familiarity with basic notions of
operad theory.  There exists rich and easily accessible literature, 
for instance the monograph
\cite{markl-shnider-stasheff:book}, overview articles
\cite{getzler:operads-revisited,markl:pokroky,markl:handbook} or a recent
account~\cite{loday-vallette}. For the reader's convenience, we 
recall a definition of
cyclic and modular operads based on finite sets in the Appendix.

\subsection*{Acknowledgment} 
We would like to express my thanks to Martin Doubek and Ralph Kaufmann
for inspiring suggestions and comments. Some of the ideas presented here
stemmed from our discussions during Ralph's visit of Prague in April
2014. We are also indebted to anonymous referees for helpful
comments that lead to substantial improvement of the exposition.

\section{Cyclic orders and the first try}
\label{Druhy_na_Vivat_tour!}

We open this section by recalling basic facts about cyclically ordered
sets and their morphisms.  Although we will actually 
need only combinations of {\em
  iso\/}morphisms of {\em totally\/} cyclically ordered finite sets,
see Remark~\ref{sec:multicyclic-orders-2}, 
we will present the definitions in full
generality, putting the material of this and the following sections
into a broader context. Our exposition follows
closely~\cite[Section~2.2]{Dyc-Kap}, cf.\ also the classical 
sources~\cite{huntington, novak}.  

\begin{definition}
A {\em partial cyclic order\/} on a set $C$ 
is a ternary relation $\CC \subset C^{\times 3}$
satisfying the following conditions.
\begin{itemize}
\item[(i)]  
The triplet $(c,c,c)$ belongs to $\CC$ for any $c\in C$.
\item[(ii)] 
If  $(a,b,c)\in\CC$ and $(b,a,c)\in\CC$, then
$\card\{a,b,c\}\leq 2$. 
 \item[(iii)] 
Let $a,b,c,d\in C$ be mutually  distinct elements.
 If $(a,b,c)\in\CC$ and $(a,c,d)\in\CC$, then $(a,b,d)\in\CC$
 and $(b,c,d)\in\CC$.
 \item[(iv)]
If $(a,b,c)\in\CC$, then $(b,c,a)\in\CC$. 
 \item[(v)]   
If $(a,b,c)\in\CC$, then $(a, a, c)\in\CC$.
 \end{itemize}
If $\CC$ satisfies, moreover, the condition:
\begin{itemize}
\item[(vi)] 
for any $a,b,c \in C$, either $(a,b,c)\in\CC$, or $(b,a,c)\in\CC$, 
\end{itemize}
then  $\CC$ is called a {\em total\/} cyclic order. A {\em cyclically ordered
  set\/} is a couple $C = (C,\CC)$ of a set with a~(partial or total) 
cyclic~order.
\end{definition}

\begin{example}
  Each set $C$ with less than two elements (including the empty one)
  has a unique total cyclic order $\CC = C^{\times 3}$. The disjoint
  union $S = C_1 \cup \cdots \cup C_b$ of cyclically ordered sets $C_i
  = (C_i,\CC_i)$, $1\leq i \leq b$, bears a natural induced cyclic
  order $\CC_1 \cup \cdots \cup \CC_b$,
  cf.~\cite[Definition~3.7]{novak}. Each subset $T \subset C$ of a
  cyclically ordered set $C = (C,\CC)$ bears an induced cyclic order
  $\CC|_T$.
\end{example}

To define morphisms, we need the following construction.
Assume that $\sigma : C' \to C''$ is a map of sets, that $C''$ bears a
partial cyclic order $\CC''$, and that one is given a partial linear order $\leq_c$
on each fiber $\sigma^{-1}(c)$, $c \in
C''$. 

\begin{definition}
The {\em lexicographic order\/}
$\CC' = \Lex\big(\sigma,\{\leq_c\}\big)$ on $C'$ is given  by postulating that 
$(a,b,c)\in\CC'$ if $\big(\sigma(a), \sigma(b),
\sigma(c)\big)\in\CC''$ and either
\begin{enumerate}
\item[(i)]  
The elements $\sigma(a), \sigma(b), \sigma(c)$  are mutually distinct, or

\item[(ii)] 
for some cyclic permutation $(x,y,z) \in \big\{(a,b,c),(b,c,a),(c,a,b)\big\}$ one has
\[
\sigma(x)=\sigma(y)\neq \sigma(z),  \and 
x\leq_{\sigma(x)} y,
\]  
\item[(iii)] 
or $\sigma(a)\!=\!\sigma(b)\!=\!\sigma(c)$ and 
$x\leq_{\sigma(x)} y\leq_{\sigma(x)}  z$  for some $(x,y,z) \in
\big\{(a,b,c),(b,c,a),(c,a,b)\big\}$. 
\end{enumerate}
\end{definition}

If $\CC'$ is a total cyclic order and each $\leq_c$ a total linear
order, then $\Lex\big(\sigma, \{\leq_c\}\big)$ is a total cyclic order.  If,
moreover, $C'$ and $C''$ are finite sets, a simple alternative
description of the lexicographic order is given
in~\cite[Example~2.2.2(4)]{Dyc-Kap}.

\begin{example}
  If $\sigma : C' \to C''$ is a monomorphism of sets and $C''$ is totally
  ordered, then the induced lexicographic order on $C'$ is the inverse
  image $(\sigma^{\times 3})^{-1}(\CC'')$ of
  $\CC''$ under the map $\sigma^{\times 3}:  C'^{\times 3} \to  C''^{\times 3}$.
The opposite extreme is when $A = (A,\leq)$ is a 
totally linearly ordered set and $\sigma : A \to *$ the (unique) map
to a one-point set.  
The total 
lexicographic order $ \CC$ induced on  $A$ by $\sigma$ is then given by
\begin{equation}
\label{eq:22}
(a,b,c)\in \CC \ \hbox  { if and only if }
a \leq b \leq c \ \hbox { or } b \leq c \leq a \ 
\hbox { or } c \leq a \leq b .
\end{equation}
\end{example}

The following notation will  often be used in the sequel.

\begin{definition}
\label{Kdy_prijede_Kveta_z_Rataj?} For a totally ordered set $A = (A,\leq)$ we
denote by $[A]$ the underlying  set of $A$ with the total cyclic
order~(\ref{eq:22}). We say that this cyclic order is {\em
  represented\/}  by the linear order $\leq$.
\end{definition}

We can finally define morphisms of cyclically ordered sets,
cf.~\cite[Definition~2.2.9]{Dyc-Kap}.

\begin{definition}
  Let $C' =(C',\CC')$ and $C'' =(C'', \CC'')$ be partially cyclically
  ordered sets.  

If the order $\CC'$ is  total, then a morphism $(C',\CC')\to
  (C'',\CC'')$ is a couple
  $\big(\sigma,\{\leq_c\}\big)$ consisting of a map $\sigma: C'\to
  C''$ of the underlying sets and of a family $\{\leq_c\}$ of total
  linear orders on each fiber $\sigma^{-1}(c)$, $c\in C''$, such that
  $\Lex\big(\sigma,\{\leq_c\}\big)=\CC'$.

  If $(C',\CC')$ is a partially cyclically ordered set, then a
  morphism is a compatible system of morphisms $(T,\CC'|_T)\to
  (C'',\CC'')$ given for all totally cyclically ordered subsets
  $T\subset C'$.
\end{definition}

\begin{example}
  A morphism of cyclically ordered sets is therefore a map of the
  underlying sets plus some additional data. We will however actually
  need only morphisms whose underlying map is an isomorphism. Since
  all its fibers are one-point sets, we do not need any additional
  data, and such a morphism will simply be an isomorphism $\sigma : C'
  \to C''$ of the underlying sets for which $(\sigma^{\times
    3})^{-1}(\CC'') = \CC'$.
\end{example}

From now on, by a cyclic order (without the adjective partial) we will
always mean a {\em total\/} cyclic order.
While linearly ordered sets $A$ are naturally represented by horizontal
intervals oriented from the left to the right, or as the combs
\begin{center}
\psscalebox{.4.4} 
{
\begin{pspicture}(0,-0.78)(7.08,0.78)
\psline[linecolor=black, linewidth=0.08](0.04,-0.74)(7.04,-0.74)
\psline[linecolor=black, linewidth=0.08](0.04,0.26)(0.04,-0.74)
\psline[linecolor=black, linewidth=0.08](1.04,0.26)(1.04,-0.74)(1.04,-0.74)
\psline[linecolor=black, linewidth=0.08](2.04,0.26)(2.04,-0.74)(2.04,-0.74)
\psline[linecolor=black, linewidth=0.08](3.04,0.26)(3.04,-0.74)(3.04,-0.74)
\psline[linecolor=black, linewidth=0.08](4.04,0.26)(4.04,-0.74)(4.04,-0.74)
\psline[linecolor=black, linewidth=0.08](5.04,0.26)(5.04,-0.74)(5.04,-0.74)
\psline[linecolor=black, linewidth=0.08](6.04,0.26)(6.04,-0.74)(6.04,-0.74)
\psline[linecolor=black, linewidth=0.08](7.04,0.26)(7.04,-0.74)(7.04,-0.74)
\psline[linecolor=black, linewidth=0.05, arrowsize=0.2cm 2.0,arrowlength=1.4,arrowinset=0.0]{<-}(7.04,0.76)(0.04,0.76)(0.04,0.76)
\end{pspicture}
}
\end{center}
whose teeth represent elements of $A$, 
we depict cyclically
ordered sets $C$ as circumferences of counterclockwise oriented circles
in the plane ${\mathbb R}^2$. We call such pictures {\em pancakes\/}:
\begin{equation}
\label{the_pancake}
\raisebox{-1.5em}{}
\psscalebox{.4.4}
{
\begin{pspicture}(0,-.5)(4.0460277,1.7)
\pscircle[linecolor=black, linewidth=0.08, dimen=outer](2.4460278,-0.029330902){1.0}
\psline[linecolor=black, linewidth=0.08](3.1460278,0.6706691)(3.546028,1.070669)(3.546028,1.070669)
\psline[linecolor=black, linewidth=0.08](1.7460278,0.6706691)(1.3460279,1.070669)(1.3460279,1.070669)
\psline[linecolor=black, linewidth=0.08](0.9460278,-0.029330902)(1.4460279,-0.029330902)(1.4460279,-0.029330902)
\psline[linecolor=black, linewidth=0.08](2.4460278,-1.0293308)(2.4460278,-1.5293308)(2.4460278,-1.5293308)
\psline[linecolor=black, linewidth=0.08](3.1460278,-0.7293309)(3.546028,-1.1293309)(3.546028,-1.1293309)
\psline[linecolor=black, linewidth=0.08](2.4460278,1.570669)(2.4460278,0.9706691)(2.4460278,0.9706691)
\psarc[linecolor=black, linewidth=0.04, dimen=outer, arrowsize=0.2cm 2.0,arrowlength=1.4,arrowinset=0.0]{->}(2.3460279,-0.029330902){2.1}{120}{240}
\psline[linecolor=black, linewidth=0.08](3.3460279,-0.42933092)(3.8460279,-0.6293309)(3.8460279,-0.6293309)
\psline[linecolor=black, linewidth=0.08](2.8460279,-0.9293309)(3.046028,-1.429331)(3.046028,-1.429331)
\psline[linecolor=black, linewidth=0.08](2.046028,0.8706691)(1.8460279,1.4706692)(1.8460279,1.4706692)
\psline[linecolor=black, linewidth=0.08](2.8460279,0.8706691)(3.046028,1.3706691)(3.046028,1.3706691)
\psline[linecolor=black, linewidth=0.08](3.4460278,-0.029330902)(4.0460277,-0.029330902)(4.0460277,-0.029330902)
\psline[linecolor=black, linewidth=0.08](0.9460278,0.5706691)(1.5460278,0.3706691)(1.5460278,0.3706691)
\psline[linecolor=black, linewidth=0.08](1.6460278,-0.5293309)(1.1460278,-0.8293309)
\psline[linecolor=black, linewidth=0.08](2.046028,-0.8293309)(1.6460278,-1.429331)
\psline[linecolor=black, linewidth=0.08](3.8460279,0.6706691)(3.3460279,0.3706691)
\end{pspicture}
}
\end{equation}
with the spikes representing elements of $C$. Our
pancakes appeared as `spiders'
in~\cite{conant-vogtmann,markl-shnider-stasheff:book}, and, in the
form of $2k$-gons, as complementary regions of quasi-filling arc
systems in~\cite[Section~4]{Kaufmann-FeynmanII}.

We will need to extend the notation of
Definition~\ref{Kdy_prijede_Kveta_z_Rataj?} as follows.  For finite
ordered sets $\Rada A1k$ we denote by $A_1\cdots A_k$, their union
with the unique order in which $A_1 < \cdots < A_k$, and by
$[A_1\cdots A_k]$ the corresponding cyclically ordered set.  If e.g.\
$A_1 = \{u\}$ we abbreviate $[\{u\}A_2]$ by $[uA_2]$, \&c.

\begin{remark}
\label{zase_mam_nutkani}
While $A_1A_2
\not= A_2A_1$ when both $A_1$ and $A_2$ are non-empty,  $[A_1A_2]$ always equals
$[A_2A_1]$. Notice that $[A'] = [A'']$ for some finite ordered sets
$A',A''$ if and only if there are ordered sets  $A_1$ and $A_2$ such
that $A' = A_1A_2$ and $A'' = A_2A_1$.
\end{remark}

Let $\Fin$ (resp.~$\Cyc$) denote the category of finite
(resp.~cyclically ordered finite) sets and their
\underline{iso}morphisms.  As we recall in the Appendix, the pieces
$\oP\(S\)$ of an (ordinary) cyclic operad $\calP$ are indexed by
finite sets $S \in \Fin$, and their structure operations are
\begin{equation}
\label{eq:14}
\ooo{u}{v}:\oP\(S'\)
\otimes \oP\(S''\) \ \lra\ \oP\bl(S'\cup S'') \setminus \{u,v\}\br,
\end{equation}
where $S'$ and $S''$ are disjoint finite sets and $u\in S'$, $v \in S''$.

We will follow
the convention used in \cite{markl-shnider-stasheff:book} and distinguish \ns\
versions of operads by \underline{underl}y\underline{in}g.
A \ns\ cyclic operad~\cite[II.5.1]{markl-shnider-stasheff:book} $\nsP$ has its components 
$\nsP\(C\)$ indexed by cyclically ordered sets $C
\in \Cyc$, and structure operations 
\begin{equation}
\label{za_chvili_sednu_na_kolo_snad_uz}
\ooo{u}{v}:\nsP\(C'\)
\otimes \nsP\(C''\) \ \lra \ \nsP\bl(C'\cup C'') \setminus \{u,v\}\br
\end{equation}
of the same type as~(\ref{eq:14}). 
The codomain of~(\ref{za_chvili_sednu_na_kolo_snad_uz}) however does not make
sense unless we specify a cyclic order on the set
\begin{equation}
\label{eq:1}
(C' \cup C'') \setminus \{u,v\},
\end{equation}
were $C' \cup C''$ is the union of the corresponding underlying sets;
we will use this kind of shorthand freely.  It is given by the {\em
  pancake merging at $\{u,v\}$\/} as follows.

Assume that the cyclic order of $C'$ is
represented by the linear order
\[
a_1 < a_2 < \cdots < a_k < u
\]
and the cyclic order of $C''$ by
\[
v < b_1 < b_2 < \cdots < b_l.
\]
Then we equip~(\ref{eq:1}) with the cyclic order is represented by
\[
a_1 < a_2 < \cdots < a_k < b_1 < b_2 < \cdots < b_l.
\]

Notice that we allow the case when $C' = \stt u$ and $C'' = \stt v$,
then~(\ref{eq:1}) is an empty cyclically ordered set. In the pancake
world,~(\ref{eq:1}) is realized by merging two pancakes into one: 
\begin{equation}
\raisebox{-2em}{}
\label{pancake_surgery}
\psscalebox{.5.5}
{
\begin{pspicture}(0,-.4)(18.316029,1.84)
\pscircle[linecolor=black, linewidth=0.08, dimen=outer](2.4460278,0.020668488){1.0}
\pscircle[linecolor=black, linewidth=0.08, dimen=outer](6.4460278,0.020668488){1.0}
\psline[linecolor=black, linewidth=0.04, linestyle=dashed, dash=0.17638889cm 0.10583334cm](3.4460278,0.020668488)(5.4460278,0.020668488)(5.4460278,0.020668488)(5.4460278,0.020668488)
\psline[linecolor=black, linewidth=0.08](3.1460278,0.7206685)(3.546028,1.1206685)(3.546028,1.1206685)
\psline[linecolor=black, linewidth=0.08](1.7460278,0.7206685)(1.3460279,1.1206685)(1.3460279,1.1206685)
\psline[linecolor=black, linewidth=0.08](0.9460278,0.020668488)(1.4460279,0.020668488)(1.4460279,0.020668488)
\psline[linecolor=black, linewidth=0.08](2.4460278,-0.9793315)(2.4460278,-1.4793315)(2.4460278,-1.4793315)
\psline[linecolor=black, linewidth=0.06](3.1460278,-0.67933154)(3.546028,-1.0793315)(3.546028,-1.0793315)
\psline[linecolor=black, linewidth=0.06](1.7460278,-0.67933154)(1.3460279,-1.0793315)(1.3460279,-1.0793315)
\psline[linecolor=black, linewidth=0.08](6.4460278,1.5206685)(6.4460278,1.5206685)(6.4460278,1.0206685)
\psline[linecolor=black, linewidth=0.06](6.4460278,-0.9793315)(6.4460278,-1.4793315)(6.4460278,-1.4793315)
\psline[linecolor=black, linewidth=0.08](2.4460278,1.5206685)(2.4460278,1.0206685)(2.4460278,1.0206685)
\psline[linecolor=black, linewidth=0.06](5.246028,-0.9793315)(5.246028,-0.9793315)(5.646028,-0.5793315)
\psline[linecolor=black, linewidth=0.08](7.146028,0.7206685)(7.5460277,1.1206685)(7.5460277,1.1206685)
\psline[linecolor=black, linewidth=0.08](5.746028,0.7206685)(5.346028,1.1206685)(5.346028,1.1206685)
\psline[linecolor=black, linewidth=0.06](7.646028,-0.9793315)(7.246028,-0.5793315)(7.246028,-0.5793315)
\psline[linecolor=black, linewidth=0.08](13.046028,0.7206685)(13.446028,1.1206685)(13.446028,1.1206685)
\psline[linecolor=black, linewidth=0.08](11.646028,0.7206685)(11.246028,1.1206685)(11.246028,1.1206685)
\psline[linecolor=black, linewidth=0.06](10.846027,0.020668488)(11.346027,0.020668488)(11.346027,0.020668488)
\psline[linecolor=black, linewidth=0.06](12.346027,-0.9793315)(12.346027,-1.4793315)(12.346027,-1.4793315)
\psline[linecolor=black, linewidth=0.06](13.046028,-0.67933154)(13.446028,-1.0793315)(13.446028,-1.0793315)
\psline[linecolor=black, linewidth=0.06](11.646028,-0.67933154)(11.246028,-1.0793315)(11.246028,-1.0793315)
\psline[linecolor=black, linewidth=0.08](15.546028,1.5206685)(15.546028,1.5206685)(15.546028,1.0206685)
\psline[linecolor=black, linewidth=0.06](15.546028,-0.9793315)(15.546028,-1.4793315)(15.546028,-1.4793315)
\psline[linecolor=black, linewidth=0.08](12.346027,1.5206685)(12.346027,0.9206685)(12.346027,0.9206685)
\psline[linecolor=black, linewidth=0.08](14.846027,0.6206685)(14.446028,1.1206685)(14.846027,0.6206685)
\psline[linecolor=black, linewidth=0.06](16.746027,-0.9793315)(16.446028,-0.67933154)(16.446028,-0.67933154)
\psarc[linecolor=black, linewidth=0.08, dimen=outer](13.996028,1.0706685){0.95}{207.51837}{327.6011}
\psarc[linecolor=black, linewidth=0.08, dimen=outer](15.646028,0.020668488){1.0}{208}{147}
\psarc[linecolor=black, linewidth=0.08, dimen=outer](12.346027,0.020668488){1.0}{34.346157}{328.253}
\psarc[linecolor=black, linewidth=0.08, dimen=outer](13.996028,-1.0293316){0.95}{28.095282}{150.83333}
\psarc[linecolor=black, linewidth=0.04, dimen=outer, arrowsize=.2cm 2.0,arrowlength=1.4,arrowinset=0.0]{->}(2.3460279,0.020668488){2.1}{165}{311}
\rput[bl](0.9460278,1.5){\psscalebox{1.5 1.5}{$C'$}}
\rput[bl](9.7,1.5){\psscalebox{1.5 1.5}{$C' \setminus \stt u $}}
\rput[bl](5,1.5){\psscalebox{1.5 1.5}{$C''$}}
\rput[br](17.946028,1.5){\psscalebox{1.5 1.5}{$C''\setminus \stt v$}}
\rput[bl](8.4,-.23){\psscalebox{2.5 2.5}{$\Longrightarrow$}}
\rput[br](4,0.2206685){\psscalebox{1.5 1.5}{$u$}}
\rput[bl](4.9,0.2206685){\psscalebox{1.5 1.5}{$v$}}
\psline[linecolor=black, linewidth=0.08](16.346027,0.7206685)(16.346027,0.7206685)(16.746027,1.1206685)
\psline[linecolor=black, linewidth=0.06](14.446028,-0.9793315)(14.446028,-0.9793315)(14.846027,-0.5793315)
\psline[linecolor=black, linewidth=0.06](16.646029,0.020668488)(17.246027,0.020668488)(17.246027,0.020668488)
\psline[linecolor=black, linewidth=0.06](7.4460278,0.020668488)(7.9460278,0.020668488)(7.9460278,0.020668488)
\psdots[linecolor=black, dotstyle=o, dotsize=0.23125](5.4460278,0.020668488)
\rput{92.38133}(3.6098614,-3.4215245){\psdots[linecolor=black, dotstyle=o, dotsize=0.23125](3.4460278,0.020668488)}
\end{pspicture}
}
\end{equation}

Modular operads~\cite[Section~2]{getzler-kapranov:CompM98},
\cite[Section~II.5.3]{markl-shnider-stasheff:book} have,
besides~(\ref{eq:14}), also the contractions
\begin{equation*}
\label{eq:15}
\xi_{uv}  : \oP\(S;g\) \to \oP\bl S \setminus
\{u,v\};g+1\br,
\end{equation*}
where $S\in \Fin$ and $u,v \in S$ are distinct elements; $\oP$ here
has an additional grading by the genus $g \in \bbN$ which is irrelevant
now.  It is therefore natural to expect that our conjectural \ns\
modular operad has pieces $\nsP(C;g)$ indexed by $C \in \Cyc$, $g \in
\bbN$ and, besides~(\ref{za_chvili_sednu_na_kolo_snad_uz}), the
contractions
\begin{equation}
\label{eq:15bis}
\xi_{uv} : \nsP\(C;g\) \to \nsP\bl C \setminus \{u,v\};g+1\br.
\end{equation}
There is only one natural cyclic order on the subset $C \setminus
\{u,v\}$ of $C$, namely the restriction of the cyclic
order of $C$, so we are forced to equip~(\ref{eq:1}) with
this cyclic order. 
The following example shows that it does not work.

\begin{example}
\label{dead_end}
Consider ordered sets $X$, $Y$ and $Z$, and distinct symbols $u',u'',v'$ and
$v''$. Let
\[
x \in \nsP\bl[Xv'Zu'];g'\br \ \hbox{ and } \   y \in
\nsP\bl[Yu''v''];g''\br
\]
be arbitrary elements.
According to the definition of the cyclic order of~(\ref{eq:1}) used 
in~(\ref{za_chvili_sednu_na_kolo_snad_uz}),
\[
(x \ooo {v'}{v''} y) \in \nsP\bl[Zu'XYu''];g'\sqplus g''\br\  \hbox { and } \
(x \ooo {u'}{u''} y) \in \nsP\bl[Xv'Zv''Y];g' \sqplus g'' \br,
\]
thus
\[
\xi_{u'u''}(x \ooo {v'}{v''} y) \in \nsP\bl[{\it XYZ}];g' \sqplus g''
\sqplus1
\br \
\hbox { while } \
\xi_{v'v''}(x \ooo {u'}{u''} y) \in \nsP\bl[{\it YXZ}]; 
g' \sqplus g'' \sqplus1\br.
\] 
The standard exchange rule in Definition~\ref{modular}(iv) between compositions and contractions in
a modular operad must of course hold also in the \ns\ case, therefore
\begin{equation}
\label{za_chvili_prijde_rodina}
\xi_{u'u''}(x \ooo {v'}{v''} y) = \xi_{v'v''}(x \ooo {u'}{u''} y). 
\end{equation}
But this is  {\bf not  possible}. If  $X,Y,Z \not= \emptyset$,
the cyclically ordered sets $[{\it XYZ}]$ and $[{\it XZY}]$ are non-isomorphic,
so~(\ref{za_chvili_prijde_rodina}) compares elements of different
spaces.  This quandary will be resolved
by introducing multicyclically ordered sets.
\begin{figure}
\psscalebox{.3.3} 
{
\begin{pspicture}(0,-16.5)(50,1)
\psarc[linecolor=black, linewidth=0.18, dimen=outer](26.22,-2.835658){3.0}{31.845161}{326.7251}
\psarc[linecolor=black, linewidth=0.18,
dimen=outer](33.17,-2.785658){2.95}{212.0579}{151.71265}
\psarc[linecolor=black, linewidth=0.18, dimen=outer](29.77,-5.485658){1.45}{48.062286}{142.09}
\psarc[linecolor=black, linewidth=.08, dimen=outer, arrowsize=0.1cm
2.0,arrowlength=1.4,arrowinset=0.0]{->}(33.26,-2.775658){1.4}{69.026505}{313.13522}
\psarc[linecolor=black, linewidth=.08, dimen=outer, arrowsize=0.1cm 2.0,arrowlength=1.4,arrowinset=0.0]{->}(26.24,-2.855658){1.4}{69.026505}{313.13522}
\psarc[linecolor=black, linewidth=0.18, dimen=outer](29.76,-0.57565796){1.2}{213.06888}{327.65515}
\psline[linecolor=black, linewidth=.1](30.52,-0.63565797)(31.32,-1.335658)
\rput(26.12,-2.735658){\psscalebox{3.0 3.0}{$x$}}
\rput(33.2,-2.8156579){\psscalebox{3.0 3.0}{$y$}}
\rput[b](22.22,-2.835658){\psscalebox{3.0 3.0}{$u'$}}
\rput[tr](30.9,-5.635658){\psscalebox{3.0 3.0}{$u''$}}
\rput[t](26.12,-6.335658){\psscalebox{3.0 3.0}{$Z$}}
\rput(26.12,1.064342){\psscalebox{3.0 3.0}{$X$}}
\rput(37.22,-2.535658){\psscalebox{3.0 3.0}{$Y$}}


\psarc[linecolor=black, linewidth=.18,
dimen=outer](26.16,-13.075658){3.0}{31.845161}{326.7251}
\psarc[linecolor=black, linewidth=.18, dimen=outer](33.11,-13.025658){2.95}{212.0579}{151.71265}
\psarc[linecolor=black, linewidth=0.12,
dimen=outer](29.66,-10.775658){1.2}{213.06888}{327.65515}
\psarc[linecolor=black, linewidth=0.08, dimen=outer, arrowsize=0.1cm
2.0,arrowlength=1.4,arrowinset=0.0]{->}(33.2,-13.015658){1.4}{69.026505}{313.13522}
\psarc[linecolor=black, linewidth=0.08, dimen=outer, arrowsize=0.1cm 2.0,arrowlength=1.4,arrowinset=0.0]{->}(26.14,-13.055658){1.4}{69.026505}{313.13522}
\psarc[linecolor=black, linewidth=.18,
dimen=outer](26.2,-13.115658){3.0}{31.845161}{326.7251}
\psarc[linecolor=black, linewidth=.18,
dimen=outer](29.7,-10.815658){1.2}{213.06888}{327.65515}
\psarc[linecolor=black, linewidth=.18, dimen=outer](29.71,-15.725658){1.45}{48.062286}{142.09}
\psline[linecolor=black, linewidth=.1](31.62,-15.035658)(31.02,-15.735658)
\rput(26.06,-12.975658){\psscalebox{3.0 3.0}{$x$}}
\rput(33.1,-13.015658){\psscalebox{3.0 3.0}{$y$}}
\rput(22.12,-12.9356575){\psscalebox{3.0 3.0}{$v'$}}
\rput(26.12,-9.035658){\psscalebox{3.0 3.0}{$Z$}}
\rput(30.52,-10.035658){\psscalebox{3.0 3.0}{$v''$}}
\rput(37,-12.8){\psscalebox{3.0 3.0}{$Y$}}
\rput[t](26.12,-16.6){\psscalebox{3.0 3.0}{$X$}}

\rput(-1,0){
\pscircle[linecolor=black, linewidth=0.18, dimen=outer](4.22,-7.935658){3.2}
\psarc[linecolor=black, linewidth=0.08, dimen=outer, arrowsize=0.1cm
2.0,arrowlength=1.4,arrowinset=0.0]{->}(4.24,-7.855658){1.4}{69.026505}{313.13522}
\rput(4.1,-7.815658){\psscalebox{3.0 3.0}{$x$}}
\rput(0.12,-7.935658){\psscalebox{3.0 3.0}{$Z$}}
\rput(8.22,-7.935658){\psscalebox{3.0 3.0}{$X$}}
\rput[b](5.12,-4.3){\psscalebox{3.0 3.0}{$u'$}}
\rput[t](5.12,-11.7){\psscalebox{3.0 3.0}{$v'$}}
}

\pscircle[linecolor=black, linewidth=0.18,
dimen=outer](13.86,-7.975658){3.2}
\psarc[linecolor=black, linewidth=.08, dimen=outer, arrowsize=0.1cm
2.0,arrowlength=1.4,arrowinset=0.0]{->}(13.82,-7.935658){1.4}{69.026505}{313.13522}
\rput(13.82,-7.835658){\psscalebox{3.0 3.0}{$y$}}
\rput[b](17.4,-6){\psscalebox{3.0 3.0}{$u''$}}
\rput[lb](17.4,-9.2){\psscalebox{3.0 3.0}{$v''$}}
\rput(9.82,-7.935658){\psscalebox{3.0 3.0}{$Y$}}

\pscircle[linecolor=black, linewidth=0.18, dimen=outer](45.7,-3.015658){3.2}
\psarc[linecolor=black, linewidth=.08, dimen=outer, arrowsize=0.1cm
2.0,arrowlength=1.4,arrowinset=0.0]{->}(45.86,-3.0756578){1.4}{69.026505}{313.13522}
\psline[linecolor=black, linewidth=0.06, linestyle=dashed, dash=0.17638889cm 0.10583334cm](47.02,-0.33565795)(47.12,-5.735658)
\psline[linecolor=black, linewidth=.1](42.12,-2.935658)(43.02,-2.935658)

\pscircle[linecolor=black, linewidth=0.18, dimen=outer](45.74,-12.955658){3.2}
\psarc[linecolor=black, linewidth=.08, dimen=outer, arrowsize=0.1cm 2.0,arrowlength=1.4,arrowinset=0.0]{->}(45.76,-12.875658){1.4}{69.026505}{313.13522}
\psline[linecolor=black, linewidth=0.06, linestyle=dashed, dash=0.17638889cm 0.10583334cm](47.12,-10.235658)(47.22,-15.635658)
\psline[linecolor=black, linewidth=.1    ](42.22,-13.035658)(43.12,-13.035658)

\psdots[linecolor=black, fillstyle=solid, dotstyle=o, dotsize=0.6](23.12,-13.035658)
\psdots[linecolor=black, fillstyle=solid, dotstyle=o, dotsize=0.6](31.26,-10.775658)
\psdots[linecolor=black, fillstyle=solid, dotstyle=o, dotsize=0.6](23.2,-2.915658)
\psdots[linecolor=black, fillstyle=solid, dotstyle=o, dotsize=0.6](31.24,-5.055658)
\psdots[linecolor=black, fillstyle=solid, dotstyle=o, dotsize=0.6](16.78,-8.995658)
\psdots[linecolor=black, fillstyle=solid, dotstyle=o, dotsize=0.6](16.62,-6.635658)
\psdots[linecolor=black, fillstyle=solid, dotstyle=o, dotsize=0.6](4.16,-10.975658)
\psdots[linecolor=black, fillstyle=solid, dotstyle=o, dotsize=0.6](4.1,-4.815658)
\psdots[linecolor=black, fillstyle=solid, dotstyle=o, dotsize=0.6](47.28,-15.595658)
\psdots[linecolor=black, fillstyle=solid, dotstyle=o, dotsize=0.6](47.12,-10.235658)
\psdots[linecolor=black, fillstyle=solid, dotstyle=o, dotsize=0.6](47.06,-0.27565795)
\psdots[linecolor=black, fillstyle=solid, dotstyle=o, dotsize=0.6](47.2,-5.615658)

\rput(42.72,-0.035657957){\psscalebox{3.0 3.0}{$Y$}}
\rput(42.72,-5.935658){\psscalebox{3.0 3.0}{$X$}}
\rput(42.62,-10.135658){\psscalebox{3.0 3.0}{$X$}}
\rput(42.62,-15.835658){\psscalebox{3.0 3.0}{$Y$}}
\rput(49.92,-3.035658){\psscalebox{3.0 3.0}{$Z$}}
\rput(47.62,0.64342){\psscalebox{3.0 3.0}{$u''$}}
\rput(47.62,-6.35658){\psscalebox{3.0 3.0}{$u'$}}
\rput(47.62,-9.135658){\psscalebox{3.0 3.0}{$v'$}}
\rput(47.62,-16.5658){\psscalebox{3.0 3.0}{$v''$}}
\rput(50.02,-12.9356575){\psscalebox{3.0 3.0}{$Z$}}
\rput(19.8,-3.6){\psscalebox{3.0 3.0}{$\ooo {v'}{v''}$}}
\rput[b](21.1,-10.4){\psscalebox{3.0 3.0}{$\ooo {u'}{u''}$}}
\rput(39.62,-2.835658){\psscalebox{3.0 3.0}{$=$}}
\rput(39.62,-13.035658){\psscalebox{3.0 3.0}{$=$}}

\psline[linecolor=black, linewidth=.06, arrowsize=0.2cm 2.0,arrowlength=1.4,arrowinset=0.0]{<-}(21.62,-4.135658)(18.22,-5.935658)
\psline[linecolor=black, linewidth=0.06, arrowsize=0.2cm 2.0,arrowlength=1.4,arrowinset=0.0]{<-}(21.52,-11.735658)(18.12,-10.035658)
\end{pspicture}
}
\caption{\label{fig:quandary}
Na\"\i ve attempts fail: the relative positions of $X$
  and $Y$ decorating the circumferences of the two pancakes in the last
  column are interchanged.}
\end{figure}

We believe that Figure~\ref{fig:quandary} helps to understand the
situation.  It shows (from the left to the right the pancake
representing the cyclically ordered set $C' = [Xv'Zu']$, the one
representing $C'' = [Yu''v'']$ and two realizations of the pancakes
representing the merging of $C' \cup C''$ at $\{v',v''\}$ resp.\ the
merging $C' \cup C''$ at $\{u',u''\}$. The meaning of the dotted lines
will be explained in Example~\ref{zas_mne_poboliva_v_krku}.
\end{example}

\section{Multicyclic orders}\label{multord}

In this section we introduce multicyclically ordered sets as objects
appropriately indexing the pieces of \ns\ modular operads.

\begin{definition}
\label{sec:multicyclic-orders-1}
A {\em multicyclic order\/} on a set finite $S$ is a disjoint decomposition $\sfS = C_1 \cup
\cdots \cup C_b$ of $S$ into $b > 0$ possibly empty totally cyclically
ordered sets. 
A {\em morphism\/} 
\[
\sigma: \sfS' = C'_1 \cup \cdots \cup C'_{b'}
\longrightarrow \sfS'' = C''_1 \cup \cdots \cup C'_{b''}
\]
is a couple
$(\sigma,u)$ consisting of

\begin{itemize}
\item[(i)]
a morphism
$\sigma = \big(\sigma,\{\leq_s\}\big)    : S' \to S''$ of
the underlying sets with the induced cyclic orders, and of

\item[(ii)]
a map $u : \{1,\ldots,b'\} \to  \{1,\ldots,b''\}$ of the indexing sets 
\end{itemize}
such that
 that $\sigma(C'_i) \subset C''_{u(i)}$
for each $1\leq i \leq b'$. 
\end{definition} 

Notice that a given set has infinitely many
multicyclic orders, but the geometricity  that we
introduce in Definition~\ref{sec:multicyclic-orders} below allows only
finite number of them. 

\begin{remark}
\label{sec:multicyclic-orders-3}
  It is clear that a morphism in
  Definition~\ref{sec:multicyclic-orders-1} is given by a family
\[
\sigma_i = \big(\sigma_i,\{\leq_{c_i}\}\big) : C'_i \to C''_{u(i)},\
 1\leq i \leq b',
\]
of morphisms of totally cyclically ordered sets. This offers the
following alternative description.  For a category $\catC$ denote by
$\Mult(\catC)$ the category of formal finite coproducts $A_1 \sqcup
\cdots \sqcup A_s$, $s \geq 1$, of objects of $\catC$ with the
Hom-sets
\[
\Mult(\catC)\big(A_1 \sqcup \cdots \sqcup A_s,B_1 \sqcup \cdots \sqcup
B_t\big) :=
\prod_{1 \leq i \leq s}\coprod_{1 \leq j \leq t}
\catC(A_i,B_j).
\]
Morphisms of Definition~\ref{sec:multicyclic-orders-1} are precisely
morphism in the category $\Mult({\mathbf \Lambda})$ generated by
the category ${\mathbf \Lambda}$ of finite totally cyclically
ordered sets.
\end{remark}

We denote by $\MultCyc$ the category of multicyclically ordered sets
and their {\bf iso}morphisms. It contains the full subcategory $\Cyc$
of cyclically ordered finite sets and their isomorphisms 
embedded as multicyclically ordered
sets with $b =1$. 

\begin{remark}
\label{sec:multicyclic-orders-2}
It follows from the observations in
Remark~\ref{sec:multicyclic-orders-3} that 
\[
\MultCyc \cong \Mult(\Cyc),
\]
our basic category of multicyclic sets could therefore be introduced
using only isomorphisms of totally cyclically ordered sets.
\end{remark}

\begin{remark}
  The underlying set $S$ of each $\sfS = C_1\cup \cdots \cup C_b \in
  \MultCyc$ has a partial cyclic order $\CC_1 \cup \cdots \cup \CC_b$
  induced by the cyclic orders $\CC_i$ of $C_i$, $1\leq i \leq
  b$. Notice however that the correspondence $\sfS \mapsto (S,\CC_1
  \cup \cdots \cup \CC_b)$ does not induce an embedding of $\MultCyc$
  into the category of partially cyclically ordered sets. If, for
  instance, $\sfS' = C_1$ is a non-empty totally cyclically ordered
  set and $\sfS'' = C_1\cup C_2$ with $C_2 = \emptyset$, then the
  underlying cyclically ordered sets agree, but $\sfS'$ and $\sfS''$
  are not isomorphic in $\MultCyc$.
\end{remark}

\begin{definition}
  A {\em \ns\ modular module\/} is a functor
 \[
E : \MultCyc \times \bbN  \rightarrow \ttM,
\]
where \ttM\ is our fixed symmetric monoidal category 
and the natural numbers $\bbN = \{0,1,2,\ldots\}$ are considered as a
discrete category.
\end{definition}

Explicitly, a \ns\ modular module is a rule $(\sfS,g)\mapsto
E\(\sfS;g\)$ that assigns  to each multicyclically ordered $\sfS$ and
 $g \in \bbN$ an object $E\(\sfS,g\) \in \ttM$,
together with a functorial family of maps $E\(\sigma\) : E\(\sfS',g\) \to
E\(\sfS'',g\)$ defined for each isomorphism $\sigma : \sfS' \to \sfS''$
of multicyclically ordered sets. If $\sfS = C_1 \cup \cdots\cup C_b$, we
will sometimes write more explicitly $E\(\Rada C1b;g\)$ instead of
$E\(\sfS;g\)$. We call $g$ the (operadic) {\em genus\/}.

\begin{definition}
\label{sec:multicyclic-orders}
We call a couple $(\sfS,g) \in \MultCyc \times \bbN$ with $\sfS =
C_1 \cup \cdots\cup C_b$ {\em geometric\/} if
\begin{equation}
\textstyle  
\label{eq:Jaruska}
G :=\frac 12( g- b +1) \in \bbN.   
\end{equation}
A \ns\ modular module $E$ is {\em geometric\/}, if  $E\(\sfS;g\)
\not=0$ implies that $(\sfS,g)$ is geometric.
\end{definition}

Geometricity therefore means that $g- b +1$ is an even non-negative
integer. We will call $G$ defined
in~(\ref{eq:Jaruska}) the {\em geometric genus\/} and $b$ the {\em number
of boundaries\/}. 
The reasons for this terminology
will be clarified later in Section~\ref{lazarev}, see also Remark~\ref{why_geometric}.

\begin{example}
For $g \leq 3$, only the following components of a geometric \ns\
modular module can be nontrivial:
 $E\(C_1;0\),\ E\(C_1,C_2;1\),\  E\(C_1,C_2,C_3;2\)$ and $E\(C_1,C_2,C_3,C_4;3\)$
 in the geometric genus $0$, and $E\(C_1;2\),\ 
E\(C_1,C_2;3\)$ in geometric genus $1$.
\end{example}

Let $C$ be a cyclically ordered set. For $u,v \in C$, the set $C\setminus \{u,v\}$
has an obvious induced cyclic order given by the
restriction of the original one. It naturally decomposes as
\begin{equation}
  \label{eq:3}
 C \setminus \{u,v\} = I_1 \cup I_2
\end{equation}
where $I_1$, $I_2$ are the open intervals whose boundary points are
$u$ and $v$, considered with the induced cyclic orders. Notice that if
$u$ and $v$ are adjacent in the cyclic order, one or
both of $I_1$, $I_2$ may be empty.

If we distribute the elements of $C$ 
around the circumference of a  pancake so that their
cyclic order is induced by the (say) counterclockwise orientation of
the oven plate, then~(\ref{eq:3}) is realized by cutting the pancake, 
with the knife running through $u$ and $v$, as in:
\begin{center}
\psscalebox{.5.5} 
{
\begin{pspicture}(0,-2.3344152)(11.346027,2.3344152)
\pscircle[linecolor=black, linewidth=0.08, dimen=outer](2.4460278,-0.029330902){1.0}
\psline[linecolor=black, linewidth=0.08](3.1460278,0.6706691)(3.546028,1.070669)(3.546028,1.070669)
\psline[linecolor=black, linewidth=0.08](1.7460278,0.6706691)(1.3460279,1.070669)(1.3460279,1.070669)
\psline[linecolor=black, linewidth=0.08](0.9460278,-0.029330902)(1.4460279,-0.029330902)(1.4460279,-0.029330902)
\psline[linecolor=black, linewidth=0.08](2.4460278,-1.0293308)(2.4460278,-1.5293308)(2.4460278,-1.5293308)
\psline[linecolor=black, linewidth=0.08](3.1460278,-0.7293309)(3.546028,-1.1293309)(3.546028,-1.1293309)
\psline[linecolor=black, linewidth=0.08](2.4460278,1.570669)(2.4460278,0.9706691)(2.4460278,0.9706691)
\psarc[linecolor=black, linewidth=0.04, dimen=outer, arrowsize=0.2cm 2.0,arrowlength=1.4,arrowinset=0.0]{->}(2.3460279,-0.029330902){2.1}{165}{311}
\rput[bl](0.5,1){\psscalebox{1.5 1.5}{$C$}}
\rput[bl](2.7,0.1706691){\psscalebox{1.5 1.5}{$u$}}
\rput[bl](1.9,-0.7){\psscalebox{1.5 1.5}{$v$}}
\rput[br](8,0.6){\psscalebox{1.5 1.5}{$I_1$}}
\rput[bl](11.5,-1){\psscalebox{1.5 1.5}{$I_2$}}
\rput[bl](5,-.4){\psscalebox{2.5 2.5}{$\Longrightarrow$}}
\psdots[linecolor=black, fillstyle=solid, dotstyle=o, dotsize=0.23125](2.046028,-0.9293309)
\psdots[linecolor=black, fillstyle=solid, dotstyle=o, dotsize=0.23125](3.3460279,0.3706691)
\psline[linecolor=black, linewidth=0.08](3.3460279,-0.42933092)(3.8460279,-0.6293309)(3.8460279,-0.6293309)
\psline[linecolor=black, linewidth=0.08](2.8460279,-0.9293309)(3.046028,-1.429331)(3.046028,-1.429331)
\psline[linecolor=black, linewidth=0.08](2.046028,0.8706691)(1.8460279,1.4706692)(1.8460279,1.4706692)
\psline[linecolor=black, linewidth=0.08](2.8460279,0.8706691)(3.046028,1.3706691)(3.046028,1.3706691)
\psline[linecolor=black, linewidth=0.08](10.446028,0.6706691)(10.846027,0.8706691)(10.846027,0.8706691)
\psline[linecolor=black, linewidth=0.08](9.046028,0.6706691)(8.646028,1.070669)(8.646028,1.070669)
\psline[linecolor=black, linewidth=0.08](8.246028,-0.029330902)(8.746028,-0.029330902)(8.746028,-0.029330902)
\psline[linecolor=black, linewidth=0.08](9.746028,-1.0293308)(9.746028,-1.5293308)(9.746028,-1.5293308)
\psline[linecolor=black, linewidth=0.06](10.446028,-0.7293309)(10.846027,-1.1293309)(10.846027,-1.1293309)
\psline[linecolor=black, linewidth=0.08](9.746028,1.570669)(9.746028,0.9706691)(9.746028,0.9706691)

\psline[linecolor=black, linewidth=0.08](10.646028,-0.42933092)(11.146028,-0.6293309)(11.146028,-0.6293309)
\psline[linecolor=black, linewidth=0.08](10.146028,-0.9293309)(10.346027,-1.429331)(10.346027,-1.429331)
\psline[linecolor=black, linewidth=0.08](8.846027,0.3706691)(8.346027,0.6706691)(8.346027,0.6706691)
\psline[linecolor=black, linewidth=0.08](9.346027,0.8706691)(9.146028,1.4706692)(9.146028,1.4706692)
\psline[linecolor=black, linewidth=0.08](10.146028,0.8706691)(10.346027,1.3706691)(10.346027,1.3706691)
\psline[linecolor=black, linewidth=0.08](3.4460278,0.3706691)(3.9460278,0.5706691)(3.9460278,0.5706691)
\psline[linecolor=black, linewidth=0.04, linestyle=dashed, dash=0.17638889cm 0.10583334cm](2.1460278,-0.8293309)(3.246028,0.2706691)(3.246028,0.2706691)
\psline[linecolor=black, linewidth=0.08](3.4460278,-0.029330902)(4.0460277,-0.029330902)(4.0460277,-0.029330902)
\psline[linecolor=black, linewidth=0.08](0.9460278,0.5706691)(1.5460278,0.3706691)(1.5460278,0.3706691)
\psline[linecolor=black, linewidth=0.08](10.746028,-0.029330902)(11.346027,-0.029330902)(11.346027,-0.029330902)
\psline[linecolor=black, linewidth=0.08](1.6460278,-0.5293309)(1.1460278,-0.8293309)
\psline[linecolor=black, linewidth=0.08](8.946028,-0.5293309)(8.646028,-0.9293309)
\psline[linecolor=black, linewidth=0.08](10.446028,-0.7293309)(10.846027,-1.1293309)
\psline[linecolor=black, linewidth=0.08](1.9460279,-1.0293308)(1.6460278,-1.429331)
\psarc[linecolor=black, linewidth=0.08, dimen=outer](9.696028,0.020669097){0.95}{38.997017}{210.56715}
\psarc[linecolor=black, linewidth=0.08, dimen=outer](9.796028,-0.0793309){0.95}{274.397}{355.44293}
\psarc[linecolor=black, linewidth=0.08, dimen=outer](10.246028,0.5706691){0.2}{299}{60}
\psarc[linecolor=black, linewidth=0.08, dimen=outer](9.096027,-0.3793309){0.25}{199.6999}{319.36636}
\psarc[linecolor=black, linewidth=0.08, dimen=outer](10.596027,-0.0793309){0.15}{341}{145}
\psarc[linecolor=black, linewidth=0.08, dimen=outer](9.796028,-0.8793309){0.15}{129.16226}{281.66156}
\psline[linecolor=black, linewidth=0.08](10.346027,0.3706691)(9.246028,-0.6293309)
\psline[linecolor=black, linewidth=0.08](10.546028,0.0706691)(9.646028,-0.8293309)
\psline[linecolor=black, linewidth=0.08](9.746028,-1.0293308)(9.946028,-1.0293308)(9.946028,-1.0293308)
\psline[linecolor=black, linewidth=0.08](10.746028,-0.029330902)(10.746028,-0.2293309)
\end{pspicture}
}
\end{center}

Pancake cutting together with pancake merging~(\ref{pancake_surgery})  
induce two basic
operations on multicyclically ordered sets.
The {\em merging\/} starts with two multicyclically ordered sets 
\[
\sfS' = C'_1\cup
\cdots \cup C'_{b'} \ \mbox{ and } \ \sfS'' = C''_1\cup \cdots \cup
C''_{b''},
\]
whose underlying sets $S'$ and $S''$ are disjoint. For $u \in C'_i$
and $v \in C''_j$, let 
\[
\sfS' \cup \sfS'' \setminus \stt {u,v}
\]
be the multicyclically ordered set whose 
underlying set is $S' \cup S'' \setminus \stt {u,v}$, decomposed as
\begin{equation}
\label{eq:4}
C'_1\cup  \cdots\cup \widehat
{C'_i} \cup \cdots \cup C'_{b'} \cup 
C''_1\cup \cdots \widehat {C''_j} \cup \cdots \cup C''_{b''} \cup
\big(C'_i \cup C''_j \setminus \stt {u,v}\big),
\end{equation}
where $\widehat{\hphantom{C}}$ indicates that the corresponding term
has been omitted, and $\big(C'_i \cup C''_j \setminus \stt {u,v}\big)$
is cyclically ordered as in~(\ref{eq:1}).

Let
$\sfS = C_1\cup \cdots \cup C_{b}$ is a multicyclically ordered set,  $u \in C_i$, $v \in C_j$. If $i
\not= j$, we define the {\em cut\/}
\begin{equation}
\label{Za_chvili_s_Jaruskou_v_levnem_baru}
\sfS  \setminus \stt {u,v}
\end{equation}
to be the multicyclically ordered set whose 
underlying set $S  \setminus \stt {u,v}$ decomposed as
\begin{equation}
\label{eq:5}
S\setminus \stt {u,v} = C_1\cup  \cdots\cup \widehat
{C_i} \cup \cdots \widehat {C_j} \cup \cdots \cup C_{b} \cup
\big(C_i \cup C_j \setminus \stt {u,v}\big)
\end{equation}
with $C_i \cup C_j \setminus \stt {u,v}$ cyclically ordered
as in~(\ref{eq:1}).  If $i =
j$, we define~(\ref{Za_chvili_s_Jaruskou_v_levnem_baru}) as the multicyclically
ordered set given by the decomposition
\begin{equation}
\label{eq:6}
S\setminus \stt {u,v} = C_1\cup  \cdots\cup \widehat
{C_i}\cup \cdots \cup C_{b} \cup
\big(C_i \setminus \stt {u,v}\big),
\end{equation}
where $C_i \setminus \stt {u,v}$ is the union of two multicyclically
ordered sets as in~(\ref{eq:3}). Notice that the number of cyclically
ordered components of~(\ref{eq:4}) is $b'+b'' -1$, of~(\ref{eq:5}) is
$b-1$ and of~(\ref{eq:6}) is $b+1$.

\section{Biased definition of non-$\Sigma$ modular operads.}
\label{sec:bias-defin-non}

We formulate a definition of \ns\ modular operads
biased towards the bilinear operations $\ooo uv$ and contractions
$\xi_{uv}$.  Recall that $\ttM$ denotes our basic symmetric monoidal
category; let $\tau$ be its commutativity constraint. Regarding
multicyclically ordered sets, we use the notation introduced in
\S\ref{multord}. 

\begin{definition}
\label{opet_podleham}
A {\ns\ modular operad\/} in $\ttM = (\ttM,\ot,1)$  is a \ns\ modular module
\[
\nsP = \big\{\nsP\(\sfS;g\) \in \ttM;\  (\sfS,g)   \in  \MultCyc \times \bbN \big\}
\]
together with morphisms (compositions)
\begin{equation}
\label{eq:7}
\ooo{u}{v}:\nsP\(\sfS';g'\)
\otimes \nsP\(\sfS'';g''\)  \to \nsP\bl\sfS'\cup \sfS'' \setminus \{u,v\};g'+g''\br
\end{equation}
defined for arbitrary disjoint multicyclically ordered 
sets $\sfS'$ and $\sfS''$ with elements $u \in S'$,
$v \in S''$ of their underlying sets, and contractions
\begin{equation}
\label{eq:8}
\xi_{uv} = \xi_{vu} : \nsP\(\sfS;g\) \to \nsP\bl\sfS \setminus \{u,v\};g+1\br
\end{equation}
given for any multicyclically ordered 
set $\sfS$ and distinct elements $u,v \in S$ of its underlying set.
These data are required to satisfy the following axioms.
\begin{enumerate}
\itemindent -1em
 \itemsep .3em 
\item [(i)]
For $\sfS'$, $\sfS''$ and $u$, $v$ as in~(\ref{eq:7}),
one has the equality
\[
\ooo{u}{v} =  \ooo{v}{u} \tau
\]
of maps $\nsP\(\sfS';g'\)\otimes \nsP\(\sfS'';g''\) \to
\nsP\bl\sfS'\cup \sfS'' 
\setminus  \{u,v\};g'+g''\br$.

\item  [(ii)]
For mutually disjoint multicyclically ordered sets
  $\sfS_1,\sfS_2,\sfS_3$, 
and $a \in S_1$, $b,c \in
  S_2$, $b \not= c$, $d  \in S_3$, one has the equality
\[
\ooo ab (\id \ot \ooo cd)  = \ooo cd (\ooo ab \ot
\id)
\] 
of maps $\nsP\(\sfS_1;g_1\) \otred \nsP\(\sfS_2;g_2\) \otred  \nsP\(\sfS_3;g_3\) \to 
\nsP\bl\sfS_1 \cupred \sfS_2 \cupred \sfS_3 \setminus \{a,b,c,d\};
g_1\!+\!
g_2\! +\! g_3\br$.

\item  [(iii)]
For a multicyclically ordered set $\sfS$ and mutually distinct $a,b,c,d \in S$,
one has the equality
\[
\xi_{ab} \ \xi_{cd} = \xi_{cd} \ \xi_{ab}
\]
of maps $\nsP\(\sfS;g\) \to \nsP\bl\sfS \setminus \{a,b,c,d\};g+2\br$.

\item  [(iv)]
For multicyclically ordered sets $\sfS', \sfS''$ and distinct $a,c \in
S'$, $b,d \in S''$, one has the equality
\[
\xi_{ab} \ \ooo{c}{d} = \xi_{cd} \ \ooo{a}{b}
\]
of maps $\nsP\(\sfS' \cup \sfS'';g\) \to \nsP\bl\sfS' \cup \sfS'' \setminus
\{a,b,c,d\};g+1\br$.

\item [(v)] For multicyclically ordered sets $\sfS', \sfS''$ and
  mutually distinct $a,c,d \in S'$, $b \in S''$, one has the equality
\[
\ooo{a}{b} \ (\xi_{cd}\ot\id) = \xi_{cd} \ \ooo{a}{b}
\]
of maps $\nsP\(\sfS' \cup \sfS'';g\) \to \nsP\bl\sfS' \cup \sfS'' \setminus
\{a,b,c,d\};g+1\br$.

\item  [(vi)]
For arbitrary isomorphisms $\rho : \sfS'\to \sfD'$
  and $\sigma : \sfS''\to \sfD''$ of multicyclically ordered sets and
  $u$, $v$ as in~(\ref{eq:7}), one has the equality
\[
\nsP\bl\rho|_{S'\setminus \{u\}}\cup\sigma|_{S''\setminus
  \{v\}}\br
\ooo{u}{v} =
\ooo{\rho(u)}{\sigma(v)} \ \big(\nsP\(\rho\)\ot\nsP\(\sigma\)\big)
\]
of maps
$\nsP\(\sfS';g'\)\otimes \nsP\(\sfS'';g''\) \to
\nsP\bl\sfD'\cup \sfD'' 
\setminus  \{\rho(u),\sigma(v)\};g'+g''\br$.

\item  [(vii)]
For $\sfS$, $u$, $v$ as in~(\ref{eq:8}) and an isomorphism $\rho :
\sfS \to \sfD$ of multicyclically ordered sets, one has the equality
\[
\nsP\bl\rho|_{\sfD \setminus \{\rho(u),\rho(v)\}}\br \ \xi_{ab} = 
\xi_{\rho(u)\rho(v)}\nsP\(\rho\)
\]
of maps
$\nsP\(\sfS;g\) \to \nsP\bl\sfS \setminus \{\rho(u),\rho(v)\};\ g+1\br$.
\end{enumerate} 
\end{definition}

\begin{remark}
  While $\xi_{uv} =\xi_{vu}$, the behavior of the $\ooo uv$-operation
  under the interchange $u \leftrightarrow v$ is given by
  axiom~(i). Axioms (ii)--(v) are interchange rules between $\ooo
  uv$'s and the contractions, while the remaining two axioms describe
  how the structure operations behave under automorphisms. In (vi) and
  (vii) one sees the restrictions of automorphisms of multicyclically
  ordered sets. It is clear that they are automorphisms of the
  corresponding multicyclically ordered subsets.
\end{remark}

\begin{example}
\label{zas_mne_poboliva_v_krku}
With the definition of \ns\ modular operads given above, both sides
of~(\ref{za_chvili_prijde_rodina}) belong to the {\em same\/} space,
namely to $\nsP\bl[XY],[Z];g'\sqplus g'' \sqplus 1\br$. The
problem risen in Example~\ref{dead_end} is thus resolved by cutting
the two rightmost pancakes in Figure~\ref{fig:quandary} along the
dotted lines.
\end{example}

\begin{definition}
A \ns\ modular operad $\nsP$ is {\em geometric\/}, if its underlying
\ns\ modular module is geometric.
\end{definition}

Notice that the $\ooo uv$-operations always preserve the geometric
genus~(\ref{eq:Jaruska}). The contractions $\xi_{uv}$ preserve
it if $u,v$ in~(\ref{eq:8}) belong to the same component of the
multicyclically ordered set~$\sfS$, and raise it by $1$ if they belong
to different components of $\sfS$.
Therefore each \ns\ modular operad $\nsP$ contains a maximal
geometric suboperad. 

From this point on, we assume that all \ns\ modular operads are {\bf
  geometric}.  With this assumption, the only nontrivial components of
$\nsP$ in (operadic) genus $0$ are $\nsP\(C;0\)$, where $C$ is a
cyclically ordered set, i.e.\ a multicyclically ordered set with one
component. It is simple to show that the collection
\[
\nsBox\, \nsP : = \big\{\nsP\(C;0\); \ \hbox{$C$ is cyclically ordered}\big\} 
\]
together with $\ooo uv$ operations~(\ref{eq:7}) is a \ns\ cyclic
operad. So we have the forgetful functor
\begin{equation}
\label{eq:21}
\nsBox: \NsModOp \to \NsCycOp.
\end{equation}
In Section~\ref{envelopes} 
we construct its left adjoint $\nsMod :
\NsCycOp \to \NsModOp$.  

One also has the forgetful functor (the {\em desymmetrization\/}) $\Des : \ModOp \to \NsModOp$ given~by
\[
\Des(\oP)\(\sfS;g\) := \oP\(S\), 
\]
where $S$ is the underlying set of the multicyclically ordered set
$\sfS$. It has the left adjoint  $\Sym :  \NsModOp \to \ModOp$ given by 
\begin{equation}
\label{eq:20}
\Sym(\nsP)\(S;g\) := \coprod\nsP\(\sfS;g\),
\end{equation}
where the coproduct runs over all multicyclically ordered sets whose
underlying set equals~$S$. Notice that the geometricity guarantees
that the coproduct in~(\ref{eq:20}) is finite. We call $\Sym(\nsP)$
the {\em symmetrization\/} of the \nsm\ operad $\nsP$.

\begin{remark}
\label{why_geometric}
  Assuming the geometricity, the category of \ns\ cyclic operads is
  isomorphic to the full subcategory of \ns\ modular operads $\nsP$
  such that $\nsP\(\sfS;g\)= 0$ for $g \geq 1$. Without the
  geometricity assumption, this natural property that obviously holds
  for ordinary modular operads, will not be true. A `geometric'
  explanation of the geometricity will be given in
  Remark~\ref{je_mi_trochu_lip}.
\end{remark}

\begin{example}
\label{Jaruska_ma_obsazeno}
Assume that the basic monoidal category is the cartesian category $\Set$ of
sets and let $*$ be a fixed one-point set. Then one has the {\em
  terminal\/} \ns\ modular operad  $\underline*_M$ with 
\[
\underline*_M\(\sfS;g\) := * \ \hbox{ for each geometric } (\sfS,g) \in \MultCyc
\times \bbN,
\] 
with all structure operations the unique maps $* \to *$ or $*\times *
\to *$.
\end{example}

\section{Un-biased definition of non-$\Sigma$ modular operads.}

We give an alternative definition of \ns\ modular operads as algebras
over a certain monad of decorated graphs representing their pasting
schemes, thus extending the table
in~\cite[Figure~14]{markl:handbook}. This way of defining various
types of operads is standard, see
e.g.~\cite[\S2.20]{getzler-kapranov:CompM98}, \cite[II.1.12,
II.5.3]{markl-shnider-stasheff:book} or
\cite[Theorem~40]{markl:handbook}, so we only emphasize the particular
features of the \ns\ modular case.  We start by recalling a definition
of graphs suggested by Kontsevich and
Manin~\cite{kontsevich-manin:preprint} commonly used in operad
theory. More refined notions of graphs already exist, see
e.g.~\cite[Part~4]{batanin13:_homot}, but we will not need them here.

\begin{definition}
\label{graph-def}
A {\em graph\/} $\Gamma$ is a finite set $\Flag(\Gamma)$
(whose elements are called {\em flags\/} or {\em half-edges\/}) 
together with an involution 
$\sigma$ and a partition $\lambda$. 

The {\em vertices\/} $\Vert(\Gamma)$
of a graph $\Gamma$ are the blocks of the partition
$\lambda$. The {\em edges\/}
$\edge(\Gamma)$ are pairs of flags forming a two-cycle of $\sigma$
relative to the decomposition of a permutation into disjoint cycles. The
{\em legs\/} $\leg(\Gamma)$ are the fixed points of $\sigma$.
\end{definition}

We  denote by $\Leg(v)$ the flags belonging to the
block $v$ or, in common speech, half-edges adjacent to the vertex
$v$. The cardinality of $\Leg(v)$ is the {\em valency\/} of $v$.  
We say that two flags $x,y \in \Flag(\Gamma)$ {\em meet\/} if
they belong to the same block of the partition $\lambda$. In plain
language, this means that they share a common vertex. 


\begin{definition}
  A {\em \ns\ modular graph\/} $\Gamma$ is a connected graph as above that has,
  moreover, 
the following
local structure at each vertex $v\in \Vert(\Gamma)$:
\begin{itemize}
\item[(i)] 
a multicyclic order of the set $\Leg(v)$ of half-edges adjacent to
 $v$ and 
\item[(ii)]
a genus $g_v \in \mathbb N$.
\end{itemize}
\end{definition}

\begin{remark}
Non-$\Sigma$ modular graphs satisfying moreover a stability
condition already appeared under the name {\em stable graphs\/}
in~\cite[Appendix~B]{Airy}, or as {\em stable ribbon graphs\/}
in~\cite[Section~8]{barannikov}, in connection with compactifications
of moduli spaces of Riemann surfaces, cf.\
also~\cite[Section~1]{penner:JDG88}. The relation between stable
ribbon graphs and quotients of ribbon graphs was discussed
in~\cite[Section~9]{Looijenga}. The local multicyclic structure
of graphs dual to arc families was also explicitly recognized 
in~\cite[Appendix~A.1]{kaufmann:dim_vs_genus}, while the global one is
apparent at the set of marked points of 
windowed surfaces~\cite[Section~1]{kaufmann-penner:NP06}.
\end{remark}

We will denote by $\sfLeg(v)$ the set $\Leg(v)$ with the given
multicyclic order and by $b_v$ the number of cyclically ordered subsets
in the corresponding decomposition. We say that $\Gamma$ is
{\em geometric\/} if at each $v \in \Vert(\Gamma)$, 
\[
\textstyle G_v := \frac12( g_v+1 - b_v) \in \bbN.
\]

Crucially, the local structure of a \ns\
modular graph induces the same kind of structure on its external legs:

\begin{proposition}
\label{Jarka_nevola}
The set of legs of a \ns\ modular graph has an induced
multicyclic order.
\end{proposition}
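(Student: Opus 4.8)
The plan is to build the induced multicyclic order on $\leg(\Gamma)$ by running a global version of the local pancake-merging and pancake-cutting operations of Section~\ref{multord} along the edges of the graph $\Gamma$, reducing $\Gamma$ to a single "total vertex" whose local multicyclic structure is the sought-after structure on the legs. Concretely, each vertex $v$ carries the multicyclically ordered set $\sfLeg(v)$, and the disjoint union $\bigcup_{v \in \Vert(\Gamma)} \sfLeg(v)$ is a multicyclically ordered set whose underlying set is $\Flag(\Gamma)$. Each edge of $\Gamma$ is a two-cycle $\{u,v\}$ of the involution $\sigma$; contracting all the edges one at a time, using the merging~(\ref{eq:4}) when $u,v$ sit on different components or the cut~(\ref{eq:6}) when they sit on the same component, successively removes the two flags forming each edge while keeping a well-defined multicyclic order on the remaining flags. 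After all edges are processed, the only surviving flags are the fixed points of $\sigma$, i.e.\ the legs, and the resulting multicyclic order is the claimed structure on $\leg(\Gamma)$.

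\emph{First I would} set up the one-edge step carefully. Fix an edge $\{u,v\}$. There are two cases according to whether $u$ and $v$ lie in the same cyclically ordered component of the current multicyclically ordered set or in two different components. In the "different components" case I apply the merging operation, which by the count at the end of \S\ref{multord} decreases the number of cyclic components by one; in the "same component" case I apply the cut~(\ref{Za_chvili_s_Jaruskou_v_levnem_baru}), which raises the number of components by one. In either case the operation is defined purely in terms of the induced cyclic orders~(\ref{eq:1}) and~(\ref{eq:3}), so it produces a genuine multicyclically ordered set on the underlying flag set with the two edge-flags removed. Since $\Gamma$ is connected and finite, iterating this over a chosen ordering of $\edge(\Gamma)$ terminates and leaves exactly $\leg(\Gamma)$.

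\textbf{The main obstacle} will be showing that the resulting multicyclic order on $\leg(\Gamma)$ is \emph{independent of the order in which the edges are contracted}. To handle this I would prove a local commutativity statement: processing two distinct edges $\{u_1,v_1\}$ and $\{u_2,v_2\}$ in either order yields the same multicyclically ordered set. This splits into finitely many configurations depending on how many of the four flags $u_1,v_1,u_2,v_2$ share cyclic components, and each is an instance of the associativity-type and exchange-type identities already enshrined as axioms~(ii)--(v) in Definition~\ref{opet_podleham} — indeed those axioms were precisely engineered so that merging and cutting commute in the appropriate sense, and the local verification mirrors the computation resolving Example~\ref{dead_end}. Because any two orderings of $\edge(\Gamma)$ are related by adjacent transpositions, this pairwise commutativity propagates to full independence, and functoriality under isomorphisms of $\Gamma$ follows from axioms~(vi) and~(vii).

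\emph{Finally I would} record that this construction is natural: an isomorphism of \ns\ modular graphs carries the vertex-wise multicyclic data compatibly, so the induced multicyclic order on legs is preserved, which will be what is needed in the monadic construction of the following section.
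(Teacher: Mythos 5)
Your construction is correct, but it takes a genuinely different route from the paper's. The paper defines the multicyclic order on $\Leg(\Gamma)$ in one stroke, globally: it introduces the oriented edge cycles of $\Gamma$ (sequences of pairs $(a_i,b_i)$ with $\sigma(a_i)=b_i$ in which $a_{i+1}$ is the immediate successor of $b_i$ in the local cyclic order at its vertex), and declares the legs occurring in a given cycle to form one cyclically ordered component. That definition involves no choices, so there is nothing to check beyond the fact that the cycles partition the relevant pairs. You instead start from the disjoint union $\bigcup_v \sfLeg(v)$ and eliminate the edges one at a time by the surgery of \S\ref{multord}; your one-edge step is the right one (in permutation language both the merging and the cut perform the same local rewiring of the successor function, and iterating it does reproduce the paper's edge cycles as the first-return map to the legs), but you pay for the choices with an order-independence argument. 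That argument is where your write-up is weakest: you cannot literally invoke axioms (ii)--(v) of Definition~\ref{opet_podleham}, since those are axioms imposed on a \ns\ modular operad $\nsP$, and indeed their codomains are only well defined \emph{because} the merging and cutting operations on multicyclically ordered sets commute in the appropriate configurations --- so the logical dependence runs the other way, and the finitely many configurations of $\{u_1,v_1,u_2,v_2\}$ relative to the cyclic components must be checked directly at the level of multicyclically ordered sets (the same kind of computation that resolves Example~\ref{dead_end}). With that check supplied, your reduction from arbitrary reorderings to adjacent transpositions closes the argument, and your naturality remark is correct. What each approach buys: the paper's is shorter and choice-free; yours makes transparent why the legs inherit exactly the structure produced by the biased operations $\ooo uv$ and $\xi_{uv}$, which is the form in which the proposition is actually used for the free operad in Remark~\ref{K_JArce_zalit_kyticky}.
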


\begin{proof}
By an oriented edge cycle in $\Gamma$ we understand a sequence
\[
(a_1,b_1), (a_2,b_2), \ldots, (a_s,b_s),
\]
where $a_i,b_i$ are half-edges such that $\sigma(a_i) = b_i$ for $1
\leq i \leq s$. So, if
$a_i \not= b_i$, $(a_i,b_i)$ is an oriented edge, if $a_i = b_i$ it is
a leg of $\Gamma$. We require that $a_i$ is the immediate
successor of $b_{i-1}$, $1 < i \leq s$, and that $a_1$ is the immediate
successor of $b_{s}$ in the cyclically ordered set to which these
elements belong.

If this cyclically ordered set 
consists of $b_{i-1}$ resp.\ of $b_s$ only, then of
course $a_i = b_{i-1}$ resp.\ $a_1 = b_s$ so the cycle runs back along the
same half-edge. We also assume that each ordered couple $(a_i,b_i)$ occurs
exactly once so that the cycle does  not not run twice along itself. 
This does not exclude that $(a_i,b_i) = (b_j,a_j)$, i.e.\ that the
cycle runs twice along the same edge, but each time in different direction. 

Let  $
\{X_1,\ldots,X_{b}\}$ be the set of oriented edge cycles of
$\Gamma$ and 
\begin{equation}
\label{zasil_jsem_polstarky}
C_i := \big\{ e \in \Leg(\Gamma);\ (e,e) \in X_i,\ 1 \leq i \leq
b\big\}.
\end{equation}
Then $\Rada C1b$ is the required disjoint decomposition of $\Leg(\Gamma)$ and each
individual $C_i$ is cyclically ordered by the cyclic orientation of
the corresponding edge cycle.
\end{proof}

We will denote by $\sfLeg(\Gamma)$ the set $\Leg(\Gamma)$ with the
multicyclic order of Proposition~\ref{Jarka_nevola} and by $b(\Gamma)$
the number of cyclically ordered sets in the decomposition of $\sfLeg(\Gamma)$. The
(operadic) genus of a \ns\ modular graph is defined by the usual formula
\cite[(II.5.28)]{markl-shnider-stasheff:book}
\[
g(\Gamma) := b_1(\Gamma)+\sum_{v\in \Vert(\Gamma)} g(v),
\]
where $b_1(\Gamma)$ is the first Betti number of $\Gamma$, i.e.~the number of independent
circuits of $\Gamma$. We leave as an exercise to prove

\begin{proposition}
 If $\Gamma$ is geometric then 
 $\big(\sfLeg(\Gamma),g\big) \in \MultCyc \times \bbN$ is geometric,
 too, i.e.\  
\[
\textstyle
G(\Gamma) := \frac12  \big(g(\Gamma) + b(\Gamma) -1\big) \in \bbN.
\]
\end{proposition}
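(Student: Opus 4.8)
The plan is to split the claim into a parity statement and a crude non-negativity bound. Write $V$, $E$, $L$ for the numbers of vertices, internal edges and legs of $\Gamma$, so that $\card\,\Flag(\Gamma) = 2E + L$, and use $b_1(\Gamma) = E - V + 1$ (valid since $\Gamma$ is connected) to rewrite
\[
g(\Gamma) = E - V + 1 + \sum_{v \in \Vert(\Gamma)} g_v .
\]
Geometricity at each vertex means $g_v + 1 - b_v \in 2\bbN$, hence $g_v \equiv b_v + 1 \pmod 2$ and $\sum_v g_v \equiv \big(\sum_v b_v\big) + V \pmod 2$. Substituting, I find
\[
g(\Gamma) + b(\Gamma) - 1 \;\equiv\; E + \sum_{v\in\Vert(\Gamma)} b_v + b(\Gamma) \pmod 2 ,
\]
so it remains to prove that the right-hand side is even and that $g(\Gamma)+b(\Gamma)-1\ge 0$.

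For the parity I would realize $b(\Gamma)$ through the permutation model of Definition~\ref{graph-def}. Let $\sigma$ be the edge involution and let $\nu$ be the permutation of $\Flag(\Gamma)$ sending each flag to its immediate cyclic successor within the cyclic block (at its vertex) that contains it. Tracing through Proposition~\ref{Jarka_nevola}, the oriented edge cycles are exactly the cycles of $\psi := \sigma\circ\nu$: if one has just arrived at a flag $f$, the next arrival flag is $\sigma(\nu(f)) = \psi(f)$, so $\psi$ has precisely $b(\Gamma)$ cycles. Writing $c(\pi)$ for the number of cycles of a permutation $\pi$ of $\Flag(\Gamma)$, one reads off $c(\sigma) = E + L$, $c(\nu) = \sum_v b_v$ (one cycle per cyclic block) and $c(\psi) = b(\Gamma)$. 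The identity $\sigma\circ\nu\circ\psi^{-1} = \id$ forces $\mathrm{sgn}(\sigma)\,\mathrm{sgn}(\nu)\,\mathrm{sgn}(\psi) = 1$; since $\mathrm{sgn}(\pi) = (-1)^{\,\card\Flag(\Gamma) - c(\pi)}$, this is equivalent to
\[
c(\sigma) + c(\nu) + c(\psi) \;\equiv\; \card\,\Flag(\Gamma) \pmod 2 .
\]
Substituting the three counts and $\card\Flag(\Gamma) = 2E+L$ gives $E + \sum_v b_v + b(\Gamma)\equiv 0 \pmod 2$, which is precisely the parity demanded above. Conceptually this is the mod-$2$ shadow of the Euler-characteristic identity $c(\sigma)+c(\nu)+c(\psi)=\card\Flag(\Gamma)+2k-2h$ for integers $k,h\ge 0$ recording the components and genus of the associated orientable surface, whose correction term $2k-2h$ is manifestly even. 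Hence $g(\Gamma)+b(\Gamma)-1$ is even.

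Non-negativity is then easy: $g(\Gamma)\ge 0$ because it is a sum of non-negative terms, and $b(\Gamma)\ge 1$ because a connected graph carrying at least one flag has at least one oriented edge cycle (equivalently $\psi$ permutes a non-empty set). Thus $g(\Gamma)+b(\Gamma)-1\ge 0$, and being an even non-negative integer it is twice an element of $\bbN$, i.e.\ $G(\Gamma)\in\bbN$.

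The step I expect to be the real obstacle is the bookkeeping identity $c(\nu)=\sum_v b_v$, which is clean only when every local cyclic block is non-empty: an empty block raises $b_v$ but carries no flag, so it is invisible both to $\nu$ and to the oriented-edge-cycle count, and it would shift the parity count by exactly its multiplicity modulo $2$. I would therefore either record that the \ns\ modular graphs occurring as pasting schemes have no empty local cyclic components, or treat each surviving empty block as an extra empty boundary circle and absorb it into $b(\Gamma)$. Beyond this, the only genuine care needed is the correct treatment of the legs (the fixed points of $\sigma$) in the three cycle counts; once that is in place, both the parity and the bound fall out immediately.
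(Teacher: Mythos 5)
The paper itself offers no proof of this proposition---it is explicitly left as an exercise---so your argument has to stand on its own, and its parity half does. The reduction of $g(\Gamma)+b(\Gamma)-1$ modulo $2$ to $E+\sum_v b_v+b(\Gamma)$ is correct, the identification of the oriented edge cycles with the cycles of $\psi=\sigma\circ\nu$ matches the construction in the proof of Proposition~\ref{Jarka_nevola}, and the sign identity $c(\sigma)+c(\nu)+c(\psi)\equiv\card\,\Flag(\Gamma)\pmod 2$ delivers exactly the congruence you need. You are also right to single out the empty local cyclic blocks: Definition~\ref{sec:multicyclic-orders-1} allows them, the oriented-edge-cycle construction does not see them, and your patch (each surviving empty block contributes an empty boundary circle to $\sfLeg(\Gamma)$) is the reading under which both Proposition~\ref{Jarka_nevola} and your parity count stay consistent, since an empty block increments $\sum_v b_v$ and $b(\Gamma)$ simultaneously.

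The one substantive issue is which inequality is actually being claimed. The displayed formula $\frac12\big(g(\Gamma)+b(\Gamma)-1\big)$ is at odds with the paper's definition of geometricity of a pair, which requires $\frac12\big(g-b+1\big)\in\bbN$ (see~(\ref{eq:Jaruska}) and~(\ref{eq:jarkoprijed})); the sign in the Proposition is almost certainly a slip, and the phrase ``is geometric, too'' forces the $g-b+1$ version. Your proof gives the parity, which is the same for both versions, but your non-negativity argument ($g\ge 0$, $b\ge 1$) only yields the weaker bound $g(\Gamma)+b(\Gamma)\ge 1$; the intended claim needs $b(\Gamma)\le g(\Gamma)+1$, which does not follow from these crude estimates. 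The missing ingredient is the integral statement behind your mod-$2$ identity: realize each vertex as a connected oriented surface of genus $G_v$ with $b_v$ boundary circles, so that $g_v=2G_v+b_v-1$, and glue along the $E$ edges, each gluing dropping the Euler characteristic by one. Connectedness of $\Gamma$ and orientation-compatibility of the pancake merging give a connected oriented surface whose boundary circles are exactly your (patched) $b(\Gamma)$ cycles, and comparing $\chi=\sum_v(2-2G_v-b_v)-E$ with $\chi=2-2G-b(\Gamma)$ and $g(\Gamma)=E-V+1+\sum_v g_v$ yields $g(\Gamma)-b(\Gamma)+1=2G\ge 0$. In short: promote the Euler-characteristic identity you mention only as a ``conceptual shadow'' into the actual non-negativity argument, and the proof covers the statement the paper means rather than the one it misprints.
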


A {\em morphism\/} $f:\Gamma_0 \to \Gamma_1$ of graphs
is given
by a permutation of 
vertices, followed by a contraction of 
some edges of the graph $\Gamma_0$, leaving the
legs untouched; a precise definition can be found in
\cite[II.5.3]{markl-shnider-stasheff:book}. Assume that $\Gamma_0$
and $\Gamma_1$ bear a \ns\ modular structure. It is simple to see that  
the \ns\ modular structure of $\Gamma_0$ induces via
$f$ a \ns\ modular structure on $\Gamma_1$. We say that
$f:\Gamma_0 \to \Gamma_1$ is a {\em morphism of \ns\ modular graphs\/} if this
induced structure on $\Gamma_1$ coincides with the given one.

For a multicyclically ordered set $\sfS$, let 
$\Gammacat g{\sfS}$  be the category whose objects are 
pairs $(\Gamma,\rho)$
consisting of a \ns\ modular graph $\Gamma$
of genus $g$ and an isomorphism $\rho : \sfLeg(\Gamma) \to \sfS$ of
multicyclically ordered sets. Morphisms of $\Gammacat gS$ are
morphisms 
as above preserving the labelling of the legs. The category $\Gammacat g{\sfS}$
has a terminal object $\star_{\sfS,g}$,
the `\ns\ modular corolla' with no edges, one vertex
$v$ of genus $g$ and legs labeled by $\sfS$.

For a \ns\ modular module $E$  and a 
\ns\ modular graph $\Gamma$, one forms the unordered
product~\cite[Definition II.1.58]{markl-shnider-stasheff:book}
\begin{equation} 
\label{Cosmo}
{{E}}(\Gamma): = \bigotimes_{v\in\Vert(\Gamma)} 
{{E}}\(\sfLeg(v);g_v\) .
\end{equation}
Let $\Iso\Gammacat g{\sfS}$ denote the subcategory of isomorphisms in
$\Gammacat g{\sfS}$.  
The correspondence $\Gamma \mapsto
E(\Gamma)$ extends to a functor from the category $\Iso\Gammacat g{\sfS}$ to
$\ttM$.  We define the endofunctor $\sfModtriple : \nsModMod \to \nsModMod$ on the
category of \ns\ modular modules as the colimit
\[
\sfModtriple {(E)}\(\sfS;g\) := 
\colim{{\Gamma\in\Iso\Gammacat gS}}{E}(\Gamma). 
\]
For each \nsm\ graph $\Gamma \in \Gammacat g{\sfS}$ one
has the coprojection
\begin{equation}
\label{eq:23}
\iota_\Gamma:
E(\Gamma)\ \lra\ \sfModtriple {(E)}\(\sfS,g\).
\end{equation}
In particular, for
the corolla   $\star_{\sfS,g}$, one gets the morphism
\begin{equation}
\label{eq:24}
\iota_{\star_{\sfS,g}} 
: E(\sfS;g) = E(\star_{\sfS,s}) \ \lra \ \sfModtriple {(E)}\(\sfS;g\).
\end{equation}

Given a \nsm\ module $E$, the second iterate $(\Free\circ \Free) (E)$ is
a colimit of \nsm\ graphs whose vertices are decorated by \nsm\ graphs
decorated by $E$, i.e.\ a colimit of `nested' graphs. Forgetting the
nests gives rise to a natural transformation
\[
\mu :\ \Free\circ \Free \ \lra\ \Free\ \hbox { (the multiplication)}    
\]
while morphisms~(\ref{eq:24}) form a natural transformation
\[
\nu:\id\   \lra \  \Free\ \hbox { (the unit).} 
\]
Precisely as in~\cite[\S2.17]{getzler-kapranov:CompM98} 
or in the proof of~\cite[Theorem~II.5.10]{markl-shnider-stasheff:book} one 
shows that  $\sfModtriple = (\sfModtriple,\mu,\nu)$ is a
monad on the category $\nsModMod$. We have the
following theorem/definition.

\begin{theorem}
\label{sec:un-biased-definition}
Non-$\Sigma$  modular operads are algebras for the monad $\Free = (\Free,\mu,\nu)$.
\end{theorem}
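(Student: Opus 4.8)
The statement is a \emph{theorem/definition} identifying the biased notion of Definition~\ref{opet_podleham} with the monadic one, so the plan is to construct an isomorphism between the category of \ns\ modular operads in the sense of Definition~\ref{opet_podleham} and the category of algebras over $\Free=(\Free,\mu,\nu)$. Following the template of~\cite[\S2.17]{getzler-kapranov:CompM98} and of the proof of~\cite[Theorem~II.5.10]{markl-shnider-stasheff:book}, I would exhibit two mutually inverse functors $\Phi$ and $\Psi$; the only genuinely new ingredient is the bookkeeping of the \emph{multicyclic} leg-structure, which was arranged in~\S\ref{multord} precisely so that contracting an edge of a \ns\ modular graph matches the biased operations $\ooo uv$ and $\xi_{uv}$.

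For $\Phi$, starting from a $\Free$-algebra $\alpha:\Free(\nsP)\to\nsP$, I would read off the biased structure by precomposing $\alpha$ with the coprojections~(\ref{eq:23}) of two elementary graphs. The graph with two vertices of genera $g'$, $g''$ and legs $\sfS'$, $\sfS''$ joined by the single edge $\{u,v\}$ yields $\ooo uv$, whose target is the merging $\sfS'\cup\sfS''\setminus\{u,v\}$; the one-vertex graph carrying the single loop $\{u,v\}$ yields $\xi_{uv}$, whose target is the cut $\sfS\setminus\{u,v\}$ with genus raised by the Betti number~$1$. Axioms~(i)--(vii) of Definition~\ref{opet_podleham} then follow from the monad-algebra law $\alpha\circ\mu=\alpha\circ\Free(\alpha)$ evaluated on the two-level graphs that realise each relation --- a three-vertex path for~(ii), a doubly-looped vertex for~(iii), and the two nestings of a loop with an edge for~(iv)--(v) --- while the equivariance axioms~(vi), (vii) and the symmetry~(i) record the functoriality of $\alpha$ along the isomorphisms of $\Iso\Gammacat g{\sfS}$ together with the absence of a preferred orientation on an edge.

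For the inverse $\Psi$, from a biased operad I would define, for each \ns\ modular graph $\Gamma\in\Gammacat g{\sfS}$, a map $\alpha_\Gamma:\nsP(\Gamma)\to\nsP\(\sfLeg(\Gamma);g(\Gamma)\)$ by contracting the edges of $\Gamma$ one at a time: a tree edge between two distinct vertices is executed by an $\ooo uv$ acting on the corresponding two tensor factors of~(\ref{Cosmo}), and a loop at a single vertex by a contraction $\xi_{uv}$, so that after all edges are gone one is left with the corolla $\star_{\sfLeg(\Gamma),g(\Gamma)}$ and an element of $\nsP\(\sfLeg(\Gamma);g(\Gamma)\)$. To let these descend to the colimit defining $\Free(\nsP)\(\sfS;g\)$, I would verify that $\alpha_\Gamma$ is invariant under the isomorphisms of $\Iso\Gammacat g{\sfS}$, which is exactly axioms~(i), (vi) and~(vii); the unit law $\alpha\circ\nu=\id$ then holds because the corolla~(\ref{eq:24}) has no edges to contract, and the associativity law holds because contracting a nested graph in one stroke agrees with contracting the inner graphs first and the outer graph afterwards.

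The \textbf{main obstacle}, and the one place where something must genuinely be proved, is the well-definedness of $\alpha_\Gamma$: the resulting element must not depend on the order in which the edges are contracted. I would connect any two contraction orders by transpositions of adjacent steps and observe that each transposition is licensed by exactly one interchange axiom of Definition~\ref{opet_podleham} --- two compositions commute by~(ii), or by functoriality of $\ot$ when they act on disjoint factors; two contractions commute by~(iii); and a composition may be exchanged with a contraction by~(iv) or~(v) according to whether the contracted pair straddles or lies inside the composed vertex. This is the same coherence underlying the monad associativity above, so once order-independence is settled the remaining verifications are routine, and a final check that $\Phi$ and $\Psi$ are mutually inverse --- the elementary graphs recover $\ooo uv$ and $\xi_{uv}$, and a general $\Gamma$ is reassembled from its elementary pieces --- completes the identification.
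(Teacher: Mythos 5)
Your proposal is correct and follows exactly the route the paper intends: the paper's own proof consists of the single remark that it is ``a straightforward modification of the proof of \cite[Theorem~II.5.41]{markl-shnider-stasheff:book}'', and what you have written out --- reading the biased operations off the two elementary graphs, defining contractions along a general graph edge by edge, and reducing well-definedness to the interchange axioms (ii)--(v) together with equivariance (i), (vi), (vii) --- is precisely that standard argument adapted to the multicyclic bookkeeping of \S\ref{multord}. Nothing in your sketch deviates from or adds to the paper's intended proof.
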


\begin{proof}
A straightforward modification of the proof of 
\cite[Theorem~II.5.41]{markl-shnider-stasheff:book}.
\end{proof}

A \nsm\ operad is thus a \nsm\ module $\nsP$ equipped with a
morphism
\begin{equation}
\label{eq:Jarka_by_uz_mela_prijet!}
\alpha : \Free ( \nsP) \ \lra \ \nsP
\end{equation}
of \nsm\ modules having the usual properties
\cite[Definition~II.1.103]{markl-shnider-stasheff:book}. The following
claim expresses a standard property of algebras over a monad.

\begin{proposition}
The multiplication $\mu : (\Free \circ \Free)( E) \to \Free (E)$ makes 
$\Free(E)$ an algebra for the monad $\Free$. It is the {\em
  free \ns\ modular operad\/} on the \ns\ modular module $E$.
\end{proposition}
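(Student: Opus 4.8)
The plan is to recognize that the proposition asserts nothing specific to the \nsm\ setting: it is the completely general fact that, for any monad $T = (T,\mu,\nu)$ on a category and any object $E$, the pair $\big(T(E),\mu_E\big)$ is the free $T$-algebra on $E$. Since it was already shown above that $\Free = (\Free,\mu,\nu)$ is a monad on $\nsModMod$ and, by Theorem~\ref{sec:un-biased-definition}, that its algebras are exactly \nsm\ operads, the whole argument is a formal diagram chase using only the monad axioms, and I would present it at that level of generality.

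First I would check that $\big(\Free(E),\mu_E\big)$ satisfies the two $\Free$-algebra axioms. The associativity axiom $\mu_E \circ \Free(\mu_E) = \mu_E \circ \mu_{\Free(E)}$ is precisely the associativity law $\mu\circ\Free\mu = \mu\circ\mu\Free$ of the monad, evaluated at the object $E$. The unit axiom $\mu_E \circ \nu_{\Free(E)} = \id_{\Free(E)}$ is the unit law $\mu\circ\nu\Free = \id$ of the monad at $E$. Both hold by the monad structure established above, so $\Free(E)$ is indeed an algebra via~$\mu_E$.

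Next I would verify the universal property that makes $\Free(E)$ free. Let $(\nsP,\alpha)$ be any \nsm\ operad, i.e.\ a $\Free$-algebra with structure map $\alpha$ as in~(\ref{eq:Jarka_by_uz_mela_prijet!}), and let $f : E \to \nsP$ be a morphism of \nsm\ modules. I would set $\bar f := \alpha \circ \Free(f) : \Free(E) \to \nsP$ and check three things. That $\bar f$ is a morphism of $\Free$-algebras follows from naturality of $\mu$ together with the associativity axiom $\alpha\circ\mu_{\nsP} = \alpha\circ\Free(\alpha)$ for $\alpha$. That $\bar f$ extends $f$, i.e.\ $\bar f \circ \nu_E = f$, follows from naturality of $\nu$ and the algebra unit law $\alpha \circ \nu_{\nsP} = \id_{\nsP}$. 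Finally, for uniqueness, if $g : \Free(E) \to \nsP$ is any $\Free$-algebra morphism with $g \circ \nu_E = f$, then using the unit law $\mu_E \circ \Free(\nu_E) = \id_{\Free(E)}$, the algebra-morphism identity $g \circ \mu_E = \alpha \circ \Free(g)$, and functoriality of $\Free$, one computes
\[
g = g\circ\mu_E\circ\Free(\nu_E) = \alpha\circ\Free(g)\circ\Free(\nu_E) = \alpha\circ\Free(g\circ\nu_E) = \alpha\circ\Free(f) = \bar f .
\]

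I do not expect a genuine obstacle here, since the content of the proposition is formal once the monad $\Free$ is in hand; the only thing demanding care is the bookkeeping of \emph{which} monad axiom supplies \emph{which} algebra identity, and in particular the use of the unit law $\mu\circ\Free\nu = \id$ (rather than $\mu\circ\nu\Free = \id$) in the uniqueness step.
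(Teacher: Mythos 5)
Your argument is correct and is exactly the standard monad-theoretic reasoning the paper invokes: the text states this proposition without proof, remarking only that it ``expresses a standard property of algebras over a monad,'' and your verification of the algebra axioms and the universal property via the monad laws is precisely the content being alluded to. The one point you rightly flag --- that the uniqueness step uses the unit law $\mu\circ\Free\nu = \id$ while the algebra unit axiom uses $\mu\circ\nu\Free = \id$ --- is handled correctly.
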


Thus $\Free(-)$ interpreted as a functor 
$\nsModMod \to \NsModOp$ is the left adjoint to the
obvious forgetful functor $\nsF :\NsModOp \to \nsModMod$. 

\begin{remark}
\label{K_JArce_zalit_kyticky}
The biased structure operations of the free operad $\Free(E)$ are
induced by the grafting of the underlying graphs. For graphs
$\Gamma'$, $\Gamma''$ with legs $u \in \Leg(\Gamma')$, $v \in
\Leg(\Gamma'')$, one has the graph $\Gamma' \ooo uv \Gamma''$ obtained
by grafting the free end of the half-edge $u$ to the free end of 
$v$. Formally, $\Gamma' \ooo uv \Gamma''$ is defined by 
\[
\Flag(\Gamma' \ooo uv \Gamma'') : = \Flag(\Gamma') \cup
\Flag(\Gamma''),
\]
the partition of $\Flag(\Gamma' \ooo uv \Gamma'')$ being the union of the
partitions of $\Flag(\Gamma')$ and $\Flag(\Gamma'')$, the
involution $\sigma$ on $\Flag(\Gamma_1 \ooo uv \Gamma_2)$ agreeing with
the involution $\sigma'$ of $\Flag(\Gamma')$ on $\Flag(\Gamma')
\setminus \stt u$, with the involution $\sigma''$ of $\Flag(\Gamma'')$
on $\Flag(\Gamma'') \setminus \stt v$, and $\sigma(u) := v$. The
contraction $\xi_{uv}(\Gamma)$ is, for $u,v \in \Flag(\Gamma)$,
defined similarly.
\end{remark}

\begin{example}
\label{zase_mne_boli_v_krku_tentokrat_jsem_to_chytl_od_Andulky}
Assume that $E$ is a geometric \nsm\ module such that $E\(\sfS;g\) = 0$
for $g \geq 1$. In other words, the only nontrivial pieces of $E$ are
$E\(C;0\)$, where $C$ is a cyclically ordered set.  The
elements of the free \nsm\ operad $\Free(E)$ are the
equivalence classes of decorated graphs whose vertices $v$ are umbels with one
blossom, i.e.\ the pancakes~(\ref{the_pancake}) with the spikes
representing the half-edges in the cyclically ordered set $\Leg(v)$.
Free operads of this form will play a central r\^ole in our
proof of~(\ref{Jarka_stale_bez_prace-b}).

A very particular case is the geometric \nsm\ module $\underline{*}$ in
$\Set$  defined by
\[
\underline{*}\(\sfS;g\) := 
\begin{cases}
{*}&\hbox{if $g=0$ and}
\\
\emptyset& \hbox{otherwise.}
\end{cases}
\]
In Section~\ref{lazarev} we visualize the generators of $\Free(\underline{*})$ via
cog wheels~(\ref{Pozitri_se_sem_nastehuje_Andulka.}).  
\end{example}

For a \nsm\ operad $\nsP$ and $\Gamma \in \Gammacat g{\sfS}$ we will call
the composition
\begin{equation}
  \label{eq:Jarusko_prijed!}
\alpha_\Gamma : \nsP(\Gamma) \stackrel{\iota_\Gamma}{\lra}       
(\Free\ \nsP)  \(\sfS;g\) \stackrel{\alpha}{\lra}  \nsP\(\sfS;g\) 
\end{equation}
of~(\ref{eq:Jarka_by_uz_mela_prijet!}) with~(\ref{eq:23}) the {\em
  contraction\/} along the graph $\Gamma$.
The following analog of
\cite[Theorem~II.5.42]{markl-shnider-stasheff:book} 
claims that the contractions are part of a functor:

\begin{theorem}
\label{Opicak_Fuk}
A geometric \nsm\ module $\nsP$ is a \nsm\ operad if and only if the
correspondence $\Gamma \mapsto \nsP(\Gamma)$ is, for each geometric
$(\sfS,g) \in \MultCyc\! \times\! \bbN$, an object part of a functor
$\alpha : \Gammacat{g}{\sfS} \to \ttM$
extending~(\ref{eq:Jarusko_prijed!}). By this we mean that $\alpha(f)
= \alpha_\Gamma$ for the unique morphism $f: \Gamma \to
\star_{\sfS,g}$.
\end{theorem}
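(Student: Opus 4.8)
The plan is to follow the proof of~\cite[Theorem~II.5.42]{markl-shnider-stasheff:book}, the only genuinely new point being the behaviour of the local multicyclic orders under edge contraction, which is governed by Proposition~\ref{Jarka_nevola}. The combinatorial input is the factorization of an arbitrary morphism $f : \Gamma \to \Gamma'$ in $\Gammacat{g}{\sfS}$ as a relabelling of vertices followed by the contraction of exactly those edges of $\Gamma$ that are not edges of $\Gamma'$. Hence the $f$-preimage of each vertex $v' \in \Vert(\Gamma')$ is a connected \ns\ modular subgraph $\Gamma_{v'} \subseteq \Gamma$, and -- this is where Proposition~\ref{Jarka_nevola} enters -- the oriented edge cycles through $v'$ restrict to oriented edge cycles of $\Gamma_{v'}$, giving a canonical isomorphism $\sfLeg(\Gamma_{v'}) \cong \sfLeg(v')$ of multicyclically ordered sets. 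Grouping the tensor factors of the unordered product~(\ref{Cosmo}) according to this decomposition yields a natural identification
\[
\nsP(\Gamma) \ \cong \ \bigotimes_{v' \in \Vert(\Gamma')} \nsP(\Gamma_{v'}),
\]
which is precisely the content of the monad multiplication $\mu$ read along $f$: a single morphism $f$ of graphs carries the same datum as one level of nesting, and forces the morphism part of any extension to be the unordered product of the local contractions.

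Assuming first that $\nsP$ carries a $\Free$-algebra structure $\alpha : \Free(\nsP) \to \nsP$, so that the contractions~(\ref{eq:Jarusko_prijed!}) are defined, I set
\[
\alpha(f) \ := \ \bigotimes_{v' \in \Vert(\Gamma')} \alpha_{\Gamma_{v'}} \ : \ \nsP(\Gamma) \ \cong \ \bigotimes_{v'} \nsP(\Gamma_{v'}) \ \lra \ \bigotimes_{v'} \nsP\(\sfLeg(v');g_{v'}\) \ = \ \nsP(\Gamma').
\]
Compatibility with composition, $\alpha(h \circ f) = \alpha(h)\,\alpha(f)$, is exactly the associativity $\alpha\,\mu = \alpha\,\Free(\alpha)$ read along a two-step contraction $\Gamma \to \Gamma' \to \Gamma''$, while $\alpha$ sends identities to identities by the unit axiom $\alpha\,\nu = \id$. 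For the unique $f_\Gamma : \Gamma \to \star_{\sfS,g}$ the target corolla has a single vertex, whence $\Gamma_{v'} = \Gamma$ and $\alpha(f_\Gamma) = \alpha_\Gamma$, the required agreement with~(\ref{eq:Jarusko_prijed!}).

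Conversely, suppose $\Gamma \mapsto \nsP(\Gamma)$ extends to a functor $\alpha : \Gammacat{g}{\sfS} \to \ttM$ agreeing with~(\ref{eq:Jarusko_prijed!}) on terminal morphisms, and put $\alpha_\Gamma := \alpha(f_\Gamma)$. Because $\star_{\sfS,g}$ is terminal, every isomorphism $\phi : \Gamma \to \Gamma'$ satisfies $f_\Gamma = f_{\Gamma'} \circ \phi$, so functoriality on $\Iso\Gammacat{g}{\sfS}$ gives $\alpha_\Gamma = \alpha_{\Gamma'} \circ \alpha(\phi)$; this is the compatibility that lets the family $\{\alpha_\Gamma\}$ factor through the colimit defining
\[
\alpha \ : \ \Free(\nsP)\(\sfS;g\) \ = \ \colim{\Gamma\in\Iso\Gammacat{g}{\sfS}} \nsP(\Gamma) \ \lra \ \nsP\(\sfS;g\).
\]
The unit axiom $\alpha\,\nu = \id$ holds because $\alpha(\id_{\star_{\sfS,g}}) = \id$. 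For associativity one applies functoriality to $f_\Gamma = f_{\Gamma'} \circ f$, where $f : \Gamma \to \Gamma'$ contracts the inner edges of a one-level nesting; identifying $\alpha(f)$ with $\bigotimes_{v'} \alpha_{\Gamma_{v'}}$ as in the first paragraph turns this into the nesting identity $\alpha_\Gamma = \alpha_{\Gamma'} \circ \bigotimes_{v'} \alpha_{\Gamma_{v'}}$, i.e.\ $\alpha\,\mu = \alpha\,\Free(\alpha)$. Hence $\nsP$ is a $\Free$-algebra, and so a \ns\ modular operad by Theorem~\ref{sec:un-biased-definition}; concretely, the operations $\ooo uv$ and $\xi_{uv}$ of Definition~\ref{opet_podleham} arise as the contractions along the one-edge two-corolla and the one-loop corolla, and axioms~(i)--(vii) become equalities of two factorizations of a single terminal morphism of a fixed two- or three-vertex graph, valid by functoriality together with the equivariance carried by the isomorphisms in $\Gammacat{g}{\sfS}$.

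The step I expect to require the most care is the first one: verifying that the decomposition $\nsP(\Gamma) \cong \bigotimes_{v'} \nsP(\Gamma_{v'})$ is natural in $f$ and that contracting the edges internal to $\Gamma_{v'}$ induces on $\sfLeg(\Gamma_{v'})$ precisely the multicyclic order that $\sfLeg(v')$ carries in $\Gamma'$, i.e.\ that the oriented edge cycles of Proposition~\ref{Jarka_nevola} are compatible with passing to subgraphs and with the genus grading, so that geometricity is preserved throughout. Once this compatibility, and the matching between morphisms of $\Gammacat{g}{\sfS}$ and one level of nesting, are in place, both implications reduce to the formal dictionary between the monad axioms for $\Free$ and the functoriality of $\alpha$, exactly as in~\cite[Theorem~II.5.42]{markl-shnider-stasheff:book}.
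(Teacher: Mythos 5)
Your proposal is correct and follows essentially the same route as the paper, whose entire proof of Theorem~\ref{Opicak_Fuk} is the single sentence that it is ``a simple modification of the proof of \cite[Theorem~II.5.42]{markl-shnider-stasheff:book}''; what you have written is precisely that modification spelled out. You also correctly isolate the one genuinely new ingredient of the \ns\ modular setting, namely the compatibility of the local multicyclic orders with edge contraction furnished by Proposition~\ref{Jarka_nevola}.
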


\begin{proof}
A simple modification of the proof of \cite[Theorem~II.5.42]{markl-shnider-stasheff:book}.
\end{proof}

\section{Modular envelopes}
\label{envelopes}

Modular envelopes of (ordinary) cyclic operads were introduced under
the name modular operadic completions by the author
in~\cite[Definition~2]{markl:la}.  The {\em modular envelope\/}
functor $\Mod: \CycOp \to \ModOp$ is the left adjoint to the obvious
forgetful functor $\Box: \ModOp \to \CycOp$. Analogously we define the
modular envelope of a \ns\ cyclic operad via the left adjoint to the
\ns\ version $\nsBox$ of the forgetful functor considered in
Section~\ref{sec:bias-defin-non}.  We of course need to prove that
this left adjoint exists:

\begin{proposition}
The forgetful functor~(\ref{eq:21}) has a left adjoint $\nsMod:
\NsCycOp \to \NsModOp$. 
\end{proposition}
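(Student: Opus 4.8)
The statement asserts the existence of a left adjoint $\nsMod$ to the forgetful functor $\nsBox : \NsModOp \to \NsCycOp$. The plan is to construct $\nsMod$ explicitly via the free--forgetful machinery already set up in Section~\ref{sec:bias-defin-non}, rather than appealing to an abstract adjoint functor theorem. The key observation is that a \ns\ cyclic operad is, by the geometricity conventions fixed after Example~\ref{Jaruska_ma_obsazeno}, the same thing as a \ns\ modular operad concentrated in genus $0$; this is exactly the content of Remark~\ref{why_geometric}. So the raw material of the construction is clean: given $\nsP \in \NsCycOp$, I may regard its underlying collection $\{\nsP\(C;0\)\}$ as a geometric \ns\ modular module $E$ that vanishes in positive genus, precisely the situation analyzed in Example~\ref{zase_mne_boli_v_krku_tentokrat_jsem_to_chytl_od_Andulky}.

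\textbf{Construction of the functor.}
First I would build a candidate $\nsMod(\nsP)$ as a quotient of the free \ns\ modular operad $\Free(E)$ on this underlying module. Concretely, $\Free(E)$ is the colimit of $E$-decorated \ns\ modular graphs whose vertices are pancakes (genus $0$, one cyclic component), as described in Example~\ref{zase_mne_boli_v_krku_tentokrat_jsem_to_chytl_od_Andulky}. The cyclic-operad composition $\ooo uv$ already present on $\nsP$ must be respected: I would form the quotient of $\Free(E)$ by the smallest congruence of \ns\ modular operads identifying a graph with an internal edge joining two genus-$0$ vertices via the $\ooo uv$-operation of $\nsP$ with the corolla obtained by performing that composition inside $\nsP$. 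This is the standard ``impose the genus-$0$ relations, freely generate the contractions'' recipe, and it produces a \ns\ modular module that I equip with the biased operations $\ooo uv$ and $\xi_{uv}$ inherited from the grafting and contraction operations of Remark~\ref{K_JArce_zalit_kyticky}.

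\textbf{Verifying the universal property.}
Next I would check that $\nsBox\big(\nsMod(\nsP)\big) = \nsP$ as \ns\ cyclic operads, using that the genus-$0$ part of $\nsMod(\nsP)$ is built out of corollas and tree-shaped graphs whose internal edges have all been collapsed by the imposed relations, so that no genus is created; geometricity guarantees the relevant colimits are finite and well-behaved. Then, for any \ns\ modular operad $\calQ$, I would establish the natural bijection
\[
\NsModOp\big(\nsMod(\nsP),\calQ\big) \;\cong\; \NsCycOp\big(\nsP, \nsBox\,\calQ\big)
\]
by the usual argument: a \ns\ cyclic operad map $\nsP \to \nsBox\,\calQ$ determines the values on generators, hence by freeness a \ns\ modular operad map $\Free(E) \to \calQ$ (using Theorem~\ref{Opicak_Fuk} to read off the required functoriality of contractions), which descends to the quotient precisely because the imposed relations are the images of the cyclic relations holding in $\nsP$, and conversely every map out of $\nsMod(\nsP)$ restricts to a cyclic map in genus $0$. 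Naturality in both variables is then routine.

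\textbf{Main obstacle.}
The technical heart is the descent step: showing that the congruence I impose is compatible with the contraction axioms (iii)--(v) of Definition~\ref{opet_podleham}, so that the quotient is genuinely a \ns\ modular operad and not merely a module. The delicate point is that contractions $\xi_{uv}$ can join two distinct cyclic components or split one, changing the boundary count $b$ and the genus $g$ in the controlled way recorded after Example~\ref{zas_mne_poboliva_v_krku}; I must confirm that the multicyclic bookkeeping of~(\ref{eq:4})--(\ref{eq:6}) is stable under the relations, i.e.\ that applying a genus-$0$ composition relation before or after a contraction yields identified elements. This is exactly the multicyclic refinement that resolved the quandary of Example~\ref{dead_end}, and the proof that it now works is an application of the interchange axioms together with the fact that the monad $\Free$ restricts correctly to geometric modules. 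I expect this compatibility, rather than the formal adjunction, to be where the real work lies.
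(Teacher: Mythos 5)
Your construction is exactly the paper's: the paper also defines $\nsMod(\nsP)$ as the quotient of the free \ns\ modular operad $\Free(\nsF\,\nsP)$ on the underlying collection placed in genus $0$ by the operadic ideal generated by the relations $x\,{}_u{\circ^\nsP}_v\,y = x\,{}_u{\circ^\Free}_v\,y$, and then observes that this yields the left adjoint to $\nsBox$. The only difference is emphasis — the ``descent'' compatibility you flag as the main obstacle is absorbed automatically by quotienting by an operadic ideal (which is by definition stable under all structure operations, including the contractions), so the paper treats the verification as immediate.
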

  
\begin{proof}
  The proof will be transparent if we assume that the objects of
  the basic category $\ttM$ have elements. Then we take the
  free \ns\ modular operad $\Free(\nsF\, \nsP)$ generated by the
  \ns\ cyclic collection $\nsF \,\nsP$ placed in the operadic genus
  $0$ 
and define
  $\nsMod(\nsP)$ as the quotient
\begin{equation}
\label{uplne_na_mne_masti}
\nsMod(\nsP) := \Free(\nsF\,\nsP)/{\mathcal I}
\end{equation}
of $\Free(\nsF\,\nsP)$ by the operadic ideal ${\mathcal I}$
generated by
\[
x {}_u {\circ^\nsP \hskip -.3em}_v y = x {}_u {\circ^\Free \hskip -.3em}_v y,
\] 
where $ {}_u {\circ^\nsP \hskip -.3em}_v$ resp.\ $ {}_u {\circ^\Free
  \hskip -.3em}_v$ are the $\ooo uv$-operations in $\nsP$ resp.\
$\Free$, $x \in \nsP(C')$, $y \in \nsP(C'')$, $u\in C'$ and $v \in
C''$ for some disjoint cyclically ordered sets $C'$, $C''$. 

If objects of $\ttM$ do not have elements, we replace the
quotient~(\ref{uplne_na_mne_masti}) by an obvious colimit. It is clear
that~(\ref{uplne_na_mne_masti}) defines a left adjoint
to~(\ref{eq:21}).
\end{proof}

\begin{definition}
We call $\nsMod(\nsP)$ the {\em \ns\ modular
  envelope\/} of the \ns\ cyclic operad $\nsP$.
\end{definition}

Informally, $\nsMod(\nsP)$ is obtained by adding
to $\nsP$ the results of contractions, splinting the cyclically ordered
groups of inputs if the contraction takes place
within the same group. This process is nicely visible at
Doubek's  construction of the modular envelope of the operad for
associative algebras~\cite{doubek:modass}.

If the basic category $\ttM$ is \Set, the category of cyclic
(resp.\ \ns\ cyclic, resp.\ modular, resp.\ \nsm) operads has a
terminal object $*_C$ (resp.\ $\underline*_C$, resp.\
$*_M$ resp.\
$\underline*_M$) consisting of a chosen one-point set $*$ in each
arity; the terminal \nsm\ operad $\underline*_M$ has already been
mentioned in Example~\ref{Jaruska_ma_obsazeno}. The main result of this
section is:

\begin{theorem}
\label{sec:modular-envelopes-6}
The modular envelope of the terminal operad in the category of cyclic
(resp.\ \ns\ cyclic) \Set-operads
is the terminal modular (resp.\ the
terminal \ns\ modular) \Set-operad, in formulas:
\begin{subequations}
\begin{align}
\label{Jarka_stale_bez_prace-a}
\Mod(*_C) & \cong *_M \ \hbox { and } 
\\ 
\label{Jarka_stale_bez_prace-b}
\nsMod(\underline*_C) & \cong \underline*_M.
\end{align}
\end{subequations}
\end{theorem}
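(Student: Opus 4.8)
The plan is to deduce both isomorphisms from a single computation, namely that each component of the left-hand side is a one-point set on the admissible arities and is empty elsewhere; the canonical map to the terminal object on the right is then a componentwise bijection, hence an isomorphism. I treat the \ns\ case~(\ref{Jarka_stale_bez_prace-b}) in detail, the symmetric case~(\ref{Jarka_stale_bez_prace-a}) being the same argument with multicyclically ordered sets replaced by bare finite sets (and thereby recovering the result of~\cite{markl:la}). Since $\underline*_M$ is terminal in $\NsModOp$ there is a unique morphism $\Phi:\nsMod(\underline*_C)\to\underline*_M$, and by Example~\ref{Jaruska_ma_obsazeno} it suffices to show that $\nsMod(\underline*_C)\(\sfS;g\)$ is a singleton for every geometric $(\sfS,g)$ and empty otherwise.

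First I would unwind the description~(\ref{uplne_na_mne_masti}) of the envelope: an element of $\nsMod(\underline*_C)\(\sfS;g\)$ is represented by a genus-$g$ \ns\ modular graph $\Gamma$ with $\sfLeg(\Gamma)\cong\sfS$, all of whose vertices carry the unique one-point decoration of $\underline*_C$ placed in operadic genus $0$, two representatives being identified exactly when related by the ideal $\mathcal I$, i.e.\ by contraction of genus-$0$ bridge edges. Because the generating collection sits in genus $0$, the envelope is a \emph{geometric} \ns\ modular operad, so its non-geometric components vanish; this disposes of the `empty otherwise' clause.

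The heart of the matter is to reduce an arbitrary representative to a canonical form. Here I would use Theorem~\ref{Opicak_Fuk}: choosing a spanning tree $T\subseteq\Gamma$, the contraction $\alpha_\Gamma$ along $\Gamma$ can be evaluated by first performing the $\ooo uv$-compositions along the edges of $T$ and only afterwards the $\xi_{uv}$-contractions along the $g=b_1(\Gamma)$ remaining edges, the interchange axioms~(ii)--(v) of Definition~\ref{opet_podleham} making this reordering legitimate. At the tree stage every edge is a genuine bridge, so the defining relation of $\mathcal I$, combined with the terminality of $\underline*_C$ (which forces every iterated $\ooo uv$-composite of one-point pieces to be \emph{the} one-point piece), collapses all of $T$ to a single genus-$0$ corolla, whose cyclic leg-order is produced by the pancake merging of Section~\ref{multord}. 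What survives is $\xi_{u_1v_1}\cdots\xi_{u_gv_g}$ applied to that corolla, so every element of $\nsMod(\underline*_C)\(\sfS;g\)$ is of this shape; for geometric $(\sfS,g)$ at least one such graph exists, giving non-emptiness.

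The main obstacle is the final independence statement: one must verify that this reduced element does not depend on the chosen spanning tree, on the order in which the elementary operations are applied, or on the auxiliary multicyclic data (the cyclic order of the collapsed corolla and the pairing of its residual half-edges by the $\xi$'s). In the symmetric case this coherence is routine, but in the \ns\ case it is precisely what the bookkeeping of Sections~\ref{multord} and~\ref{sec:bias-defin-non} was designed to guarantee: axiom~(i) and axioms~(iii)--(vii) of Definition~\ref{opet_podleham} control the interchange of the $\xi$'s and the action of reordering isomorphisms, Proposition~\ref{Jarka_nevola} and the merging/cutting rules keep the multicyclic orders consistent, and geometricity~(\ref{eq:Jaruska}) ensures every intermediate stage stays within the admissible range. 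Once this is in hand, $\nsMod(\underline*_C)\(\sfS;g\)$ is a singleton for each geometric $(\sfS,g)$, so $\Phi$ is an isomorphism and~(\ref{Jarka_stale_bez_prace-b}), together with~(\ref{Jarka_stale_bez_prace-a}), follows.
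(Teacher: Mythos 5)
Your overall architecture is sound and close in spirit to the paper's: reduce to one-vertex graphs by collapsing a spanning tree, then analyse what the $g$ residual contractions can produce. (The paper phrases this through the adjunction $\NsModOp\big(\nsMod(\underline*_C),\nsP\big)\cong\NsCycOp\big(\underline*_C,\nsBox\,\nsP\big)$ and ``internal operads'' in an arbitrary target $\nsP$, rather than computing the components of the quotient~(\ref{uplne_na_mne_masti}) directly, but for \Set-operads these are two faces of the same computation.) The problem is that you stop exactly where the proof begins. The statement you defer to ``the bookkeeping of Sections~\ref{multord} and~\ref{sec:bias-defin-non}'' --- that the reduced element is independent of the spanning tree, of the linear orders chosen to represent the cyclic blocks, and of the pairing of the residual half-edges --- is not a formal consequence of axioms (i)--(vii) of Definition~\ref{opet_podleham}; it is the entire content of the theorem. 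Example~\ref{dead_end} shows that na\"ive bookkeeping fails, and the multicyclic formalism only makes the two sides of~(\ref{za_chvili_prijde_rodina}) land in the same component; it does not by itself make two different collapsing schemes produce the same element of that component.

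Concretely: two spanning trees of the same $\Gamma$ collapse to corollas decorated by $\varsigma([X_1X_2])$ and $\varsigma([X_2X_1])$ with different residual pairings, and one must actually prove, for instance, that $\xi_{uv}\varsigma([XuYv])$ depends only on the cyclic classes $[X]$ and $[Y]$. The paper does this by exhibiting, for each pair of competing presentations, a common ``equalizer'' element one genus lower (e.g.\ $\varpi\big([v''Yu''Zv'Uu'X];g-2\big)$ in its Case~1) from which both presentations are obtained by a single contraction, and then invoking the interchange axioms (iii)--(v); this is organised as an induction on $g$ using Lemma~\ref{sec:modular-envelopes-2} to remove ribs one at a time, with separate treatments of $b(K)=1$ versus $b(K)>1$ and of crossing versus parallel ribs. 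None of this appears in your proposal, so as written the argument has a genuine gap at its crucial step. (Your reduction of the symmetric case to the same scheme is fine, and the observation that the non-geometric components are empty is correct.)
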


The cyclic operad $\Com$ for commutative associative algebras is the
linear span of the terminal cyclic $\Set$-operad, that is 
$\Com = \Span(*_C)$. 
The following immediate corollary of
Theorem~\ref{sec:modular-envelopes-6} 
was stated without proof in
\cite[page~382]{markl:la}.  

\begin{theorem}[\cite{markl:la}]
\label{sec:modular-envelopes}
The modular envelope $\Mod(\Com)$ of the cyclic operad for commutative
associative algebras is the linear span of the terminal
modular set-operad, i.e. 
\[
\Mod(\Com)\(S;g\) = \bfk, \ (S,g) \in \Fin\! \times\! \bbN,
\]
the maps $\Mod(\Com)\(\sigma\)$ induced by morphisms in $\Fin$
are the identities,
all $\ooo uv$-operations are the canonical
isomorphisms $\bfk \ot \bfk  \rediso  \bfk$ and all contractions $\xi_{uv}$ are the
identities.
\end{theorem}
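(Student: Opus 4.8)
The plan is to deduce the statement from the main Theorem~\ref{sec:modular-envelopes-6} by exploiting that both the linearization $\Span$ and the modular envelope $\Mod$ are left adjoints, so that they commute up to natural isomorphism. First I would record that, by definition, $\Com = \Span(*_C)$, whence the object to be identified is $\Mod\big(\Span(*_C)\big)$; here $\Mod : \CycOp \to \ModOp$ denotes the modular envelope in the category of $\k-Mod$-operads, and $\Span$ denotes the linearization of Figure~\ref{Jaruska_bez_prace} lifted from the base category to cyclic and to modular operads. This lift exists and is left adjoint to the corresponding forgetful functor $U$ because $\Span : \Set \to \k-Mod$ is strong monoidal, sending $\times$ to $\ot$, and preserves all colimits.

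The main step is the natural isomorphism
\[
\Mod \circ \Span \ \cong\ \Span \circ \Mod
\]
of functors from cyclic $\Set$-operads to modular $\k-Mod$-operads. I would prove it by uniqueness of adjoints. The two forgetful composites
\[
U \circ \Box,\ \ \Box \circ U \ :\ \ModOp(\k-Mod) \longrightarrow \CycOp(\Set)
\]
coincide on the nose, since each simply discards the genus-raising modular operations and the $\bfk$-linear structure, leaving the underlying cyclic $\Set$-operad. Passing to left adjoints, and using that the left adjoint of a composite is the composite of the left adjoints in the reverse order, the left adjoint of $U \circ \Box$ is $\Mod \circ \Span$ while that of $\Box \circ U$ is $\Span \circ \Mod$; equality of the two forgetful functors thus forces the displayed isomorphism.

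Granting this, the computation is immediate:
\[
\Mod(\Com) = \Mod\big(\Span(*_C)\big) \cong \Span\big(\Mod(*_C)\big) \cong \Span(*_M),
\]
the last isomorphism being~(\ref{Jarka_stale_bez_prace-a}) of Theorem~\ref{sec:modular-envelopes-6}. Since $*_M\(S;g\) = *$ for each $(S,g) \in \Fin \times \bbN$ and $\Span(*) = \bfk$, this gives $\Mod(\Com)\(S;g\) = \bfk$. The claims about the structure maps are then read off by applying the strong monoidal $\Span$ to the structure maps of $*_M$: the unique maps $* \to *$ induced by morphisms of $\Fin$, as well as the contractions, become the identity $\bfk \to \bfk$, while the operations $\ooo uv : * \times * \to *$ become the canonical isomorphisms $\bfk \ot \bfk \rediso \bfk$.

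The hard part will be making the adjoint commutation fully rigorous: one must check that $\Span$ lifts to the operad categories as a genuine left adjoint and that the square of forgetful functors commutes strictly rather than merely up to isomorphism. Both are routine once one knows $\Span$ is strong monoidal and colimit-preserving. As a concrete fallback to the abstract adjoint argument, one can verify $\Mod \circ \Span \cong \Span \circ \Mod$ by hand, using that $\Span$ commutes with the colimits of decorated graphs defining the free modular operad monad $\Freesym$, and hence with the quotient presentation of $\Mod$ analogous to~(\ref{uplne_na_mne_masti}).
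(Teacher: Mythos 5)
Your proposal is correct and follows essentially the same route as the paper: the paper's proof is precisely the observation that $\Span$ and $\Mod$ are left adjoints to forgetful functors that commute, so $\Mod\big(\Span({\mathcal S})\big) \cong \Span\big(\Mod({\mathcal S})\big)$, after which~(\ref{Jarka_stale_bez_prace-a}) of Theorem~\ref{sec:modular-envelopes-6} finishes the argument. Your additional detail on why the adjoints commute (strong monoidality and colimit preservation of $\Span$, strict equality of the forgetful composites) merely fleshes out what the paper leaves implicit.
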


\begin{proof}
Both $\Span(-)$ and $\Mod(-)$ are the left adjoints to forgetful functors
that commute with each other, so 
\[
\Span\big(\Mod({\mathcal S})\big) \cong \Mod\big(\Span({\mathcal S})\big)
\]
for each cyclic operad ${\mathcal S}$ in $\Set$.
\end{proof}

Since the \ns\ cyclic operad $\nsAss$ is the linear span of the
terminal  \ns\ cyclic  $\Set$-operad $\underline *_C$, 
we likewise obtain from Theorem~\ref{sec:modular-envelopes-6}:

\begin{theorem}
\label{sec:modular-envelopes-1}
The \ns\ modular envelope $\nsMod(\nsAss)$ of the \ns\ cyclic operad
$\nsAss$ for associative algebras is the linear span of the terminal \ns\ modular operad in the
category of sets. Explicitly,
\[
\nsMod(\nsAss)\(\sfS;g\) = \bfk
\]
for each multicyclically ordered set $\sfS$
  and
$g \in \bbN$. All structure operations are either the identities
of $\bfk$ or the canonical isomorphisms $\bfk \ot \bfk \rediso \bfk$.
\end{theorem}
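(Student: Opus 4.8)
The plan is to obtain this statement from Theorem~\ref{sec:modular-envelopes-6} by exactly the formal device used to pass from the commutative case to Theorem~\ref{sec:modular-envelopes}, namely the fact that left adjoints to a commuting pair of right adjoints commute up to canonical natural isomorphism.

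First I would record the relevant square of forgetful functors. The linearization $\Span : \Set \to \k-Mod$ induces, arity-wise, forgetful functors from \ns\ cyclic (resp.\ \ns\ modular) operads in $\k-Mod$ to their counterparts in $\Set$, and these fit together with two copies of the functor $\nsBox$ of~(\ref{eq:21}) into a square of forgetful functors. This square commutes on the nose: forgetting the $\bfk$-linear structure and passing to the genus-zero components as in~(\ref{eq:21}) act on independent parts of the data, so the order in which they are performed is immaterial. Since $\Span$ is left adjoint to the underlying-set functor $U$ and $\nsMod$ is left adjoint to $\nsBox$, uniqueness of adjoints then yields a natural isomorphism
\[
\Span\big(\nsMod(\calP)\big)\ \cong\ \nsMod\big(\Span(\calP)\big)
\]
for every \ns\ cyclic $\Set$-operad $\calP$.

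Next I would specialize to $\calP = \underline*_C$. Using $\nsAss = \Span(\underline*_C)$ together with the displayed isomorphism and isomorphism~(\ref{Jarka_stale_bez_prace-b}) of Theorem~\ref{sec:modular-envelopes-6}, I obtain
\[
\nsMod(\nsAss) = \nsMod\big(\Span(\underline*_C)\big)\ \cong\ \Span\big(\nsMod(\underline*_C)\big)\ \cong\ \Span(\underline*_M).
\]
Finally, since $\underline*_M\(\sfS;g\) = *$ for every geometric $(\sfS,g)$ by Example~\ref{Jaruska_ma_obsazeno}, its span takes the value $\bfk$ in each such arity, and the structure operations of $\underline*_M$---the unique maps $* \to *$ and $*\times * \to *$---linearize to the identity of $\bfk$ and the canonical isomorphism $\bfk\ot\bfk\rediso\bfk$ respectively. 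This gives the asserted explicit description.

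The step I expect to carry the real content is the commutativity of the square of forgetful functors, for only once that is in place does the abstract adjunction principle apply. In practice this amounts to checking that linearizing a \ns\ cyclic $\Set$-operad and then forming its \ns\ modular envelope produces the same result, up to canonical isomorphism, as first forming the envelope in $\Set$ and then linearizing; everything after this observation is purely formal.
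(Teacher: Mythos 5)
Your proposal is correct and follows essentially the same route as the paper: the paper derives Theorem~\ref{sec:modular-envelopes-1} from isomorphism~(\ref{Jarka_stale_bez_prace-b}) of Theorem~\ref{sec:modular-envelopes-6} by the identical commuting-left-adjoints argument it uses for Theorem~\ref{sec:modular-envelopes} (the word ``likewise'' in the text is doing exactly the work you spell out). Your only refinement is to make explicit the commutativity of the square of forgetful functors, which the paper asserts without elaboration; note also that, as in Example~\ref{Jaruska_ma_obsazeno}, the components equal $\bfk$ only for \emph{geometric} $(\sfS,g)$, a restriction your write-up correctly records.
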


Since the symmetrization~(\ref{eq:20}) clearly commutes with the \ns\
modular envelope functor, Theorem~\ref{sec:modular-envelopes-1}
implies the isomorphisms
\begin{equation}
\label{Jaruska_dalsi_tyden_na_chalupe.}
\Mod(\Ass) \cong \Sym\big(\Span(\underline*_M)\big)
\end{equation} 
proved in \cite{chuang-lazarev:dual-fey,doubek:modass}
though not expressed in this form there.

Let us start proving the isomorphisms~(\ref{Jarka_stale_bez_prace-a})
and~(\ref{Jarka_stale_bez_prace-b}) of
Theorem~\ref{sec:modular-envelopes-6}.  From here till the end of this
section the basic category will be the category of sets.

\begin{proof}[Proof of~(\ref{Jarka_stale_bez_prace-a})]
  It will be a warm-up for the proof
  of~(\ref{Jarka_stale_bez_prace-b}) given below.  The modular
  envelope $\Mod(*_C)$ is characterized by the adjunction
\begin{equation}
\label{Zase_podleham}
\ModOp\big(\Mod(*_C),\oP\big) \cong \CycOp(*_C,\Box\, \oP)
\end{equation}
that must hold for each modular $\Set$-operad $\calP$.
It is clear that there is a
one-to-one correspondence between morphisms in $\CycOp(*_C,\Box\, \oP)$
and families
\begin{equation}
\label{boli_mne_v_krku}
\varsigma(S) \in \oP\(S;0\),\ S \in \Fin,
\end{equation}
such that
\begin{equation}
\label{eq:12}
\oP\(\sigma\)\big(\varsigma(S)\big) = \varsigma(D) \hbox { and }
\varsigma(S') \ooo uv \varsigma(S'') = \varsigma (S' \cup S'' 
\setminus \{u,v\})
\end{equation}
for each $S',S'', u,v,\sigma$ for which the above expressions make sense. 

The theorem will obviously be proved if we exhibit a
one-to-one correspondence between morphisms   $*_C \to \Box\,\oP$
represented by~(\ref{boli_mne_v_krku}), and families 
\begin{equation}
\label{1boli_mne_v_krku}
\varpi(S;g) \in \oP\(S;g\),\ S \in \Fin\! \times\! \bbN,
\end{equation}
such that  $\varpi(S;0) = \varsigma(S)$ for each $S \in \Fin$, and    
\begin{subequations}
\begin{align}
\label{eq:9}
\oP\(\sigma\)\big(\varpi(S;g)\big)& = \varpi(D;g),\ \sigma : S \to D \in \Fin,
\\
\label{eq:2}
\varpi(S';g') \ooo uv \varpi(S'';g'')& = \varpi \big(S' \cup S'' 
\setminus \{u,v\};g' + g''\big), \ \hbox{ and }
\\
\label{eq:10}
\xi_{uv}\varpi(S;g) &= \varpi\big(S \setminus \{u,v\}; g-2\big),
\end{align}
\end{subequations}
whenever the above objects are defined. In the light
of~(\ref{Zase_podleham}) this is the same as to show that 
each family~(\ref{boli_mne_v_krku}) uniquely
determines a family~(\ref{1boli_mne_v_krku}) with $\varsigma(S;0) =
\varsigma(S)$ for each $S \in \Fin$.

Let $\Gamma$ be a connected graph  of genus $g$ with trivial local genera
$g_v$ at each vertex, and $\Leg(\Gamma) = S$. Decorate the
vertices of $\Gamma$ by~(\ref{boli_mne_v_krku}) and denote the result
by 
\[
\varsigma(\Gamma) = \bigotimes_{v \in \Vert(\Gamma)}
\varsigma\big(\Leg(v)\big) \in \Freesym(\oP)\(S;g\),
\]
where $\Freesym(\oP)$ is the free modular
operad~\cite[Section~II.5.3]{markl-shnider-stasheff:book} on the
modular module $F\oP$. Formally correct notation would
  therefore be $\Freesym(F \oP)$ but we want to save space.
The composition
(contraction) \hbox{$c:\Freesym(\oP)\(S;g\) \to \oP\(S;g\)$}
along the graph $\Gamma$ determines
\begin{equation}
\label{eq:13}
\varpi_\Gamma := c\big(\varsigma(\Gamma)\big)  \in \oP\(S;g\).
\end{equation}
Assume we proved, for $\Gamma$, $S$ and $g$ as above, that 
\begin{equation}
\label{eq:11}
\hbox{$\varpi_\Gamma$ depends only on the type of $\Gamma$, i.e. on
$S$ and $g$, not on the concrete $\Gamma$.}
\end{equation}
We claim that then $\varpi(S;g) := \varpi_\Gamma$ is the
requisite extension of~(\ref{1boli_mne_v_krku}).

Indeed, to establish~(\ref{eq:9}), denote by $\Gamma'$ the graph
$\Gamma$ with the legs relabeled according to
$\sigma$. Then~(\ref{eq:11}) implies that
$\oP\(\sigma\) \varpi_\Gamma = \varpi_{\Gamma'}$. As for~(\ref{eq:2}),  
assume that $\varpi(S';g') =
\varpi_{\Gamma'}$ and  $\varpi(S'';g'') =
\varpi_{\Gamma''}$. 
Then $\varpi(S';g') \ooo uv
\varpi(S'';g'')$ equals $\varpi_{\Gamma' \ooo uv \Gamma''}$ with
$\Gamma' \ooo uv \Gamma''$ the grafting recalled in Remark \ref{K_JArce_zalit_kyticky},
which in turn equals $\varpi (S' \cup S'' 
\setminus \{u,v\};g'\! + \! g'')$. This
proves~(\ref{eq:2}). Property~(\ref{eq:10}) can be
discussed similarly. 

Choose a maximal subtree $T$ of $\Gamma$ and denote by $K := \Gamma/T$
the result of shrinking $T \subset K$ into a corolla. Notice that $K$
has only one vertex. By the
associativity~\cite[Theorem~II.5.42]{markl-shnider-stasheff:book}
of the contractions, $\varpi_\Gamma =\varpi_K$, it is therefore enough
to prove~(\ref{eq:11}) for graphs $K$ with one vertex. Graphically,
such a $K$ is a non-planar `tick' with $g$ `bellies' as in
\begin{center}
\psscalebox{.3.3} 
{
\begin{pspicture}(0,-2.1025)(7.775051,2.6)
\psbezier[linecolor=black, linewidth=0.102](4.385,0.2225)(4.698754,-0.7461785)(4.7731714,-2.3635066)(5.42371,-2.8775)(6.074248,-3.3914936)(6.9987535,-2.5634198)(6.685,-1.5947413)(6.3712463,-0.62606287)(5.3323126,-0.5052866)(4.681774,0.008706897)
\psbezier[linecolor=black, linewidth=0.102](4.485,0.1225)(5.482268,0.048628315)(6.2240734,-0.6862293)(7.085,-0.1775)(7.9459267,0.3312293)(7.6822677,1.3486283)(6.685,1.4225)(5.687732,1.4963716)(5.2459264,0.5312293)(4.385,0.0225)
\psbezier[linecolor=black, linewidth=0.102](4.585,0.0225)(3.991801,0.82755584)(4.040779,1.6284142)(3.085,1.9225)(2.129221,2.2165859)(1.391801,1.6275558)(1.985,0.8225)(2.5781991,0.017444164)(3.529221,0.31658584)(4.485,0.0225)
\psline[linecolor=black, linewidth=0.102](4.485,0.1225)(4.185,3.2225)(4.185,3.2225)
\psline[linecolor=black, linewidth=0.102](4.485,0.0225)(5.585,3.0225)(5.585,3.1225)
\psline[linecolor=black, linewidth=0.102](4.585,0.0225)(6.585,2.6225)(6.585,2.6225)
\psline[linecolor=black, linewidth=0.102](4.485,0.1225)(1.285,-0.7775)(1.285,-0.7775)
\psline[linecolor=black, linewidth=0.102](4.585,0.0225)(1.785,-1.8775)(1.785,-1.7775)
\psline[linecolor=black, linewidth=0.102](4.485,0.2225)(3.685,-3.2775)(3.685,-3.2775)
\psline[linecolor=black, linewidth=0.102](4.485,0.2225)(2.585,-2.5775)(2.585,-2.5775)
\rput{-1.2351013}(-0.0016216917,0.094546765){\psdots[linecolor=black,
  dotsize=0.7671875](4.385,0.1225)}
\end{pspicture}
}
\end{center}
where $g=3$. We prove~(\ref{eq:11}) by
induction on the genus $g$.

If $g = 0$, $K$ is the corolla with $\Leg(K) = S$ whose unique vertex
is decorated by $\varpi_T$ which equals $\varsigma(S)$, because the
contraction in~(\ref{eq:13}) uses only the cyclic part $\Box\,\oP$ of
the operad~$\oP$. Therefore $\varpi_T$ does not depend on the choice
of $T$ and is determined by a given map $*_C \to \Box\, \oP$.

Assume we have proved~(\ref{eq:11}) for all $g'< g$. Let $K'$ be a
tick obtained by removing one belly of $K$. Then clearly $\Leg(K') = S
\cup \{u,v\}$ for some $u$ and $v$, and
\[
\varpi_K =  \xi_{uv} (\varpi_{K'}).
\]
By the induction assumption, $\varpi_{K'}$ equals $\varpi\big( S \cup
\{u,v\};g-1\big)$ and does not depend on the concrete form of $K'$, so
\[
\varpi_K =  \xi_{uv}\big( \varpi( S \cup
\{u,v\};g-1)\big)
\]
depends only on the finite set $S$ and the genus $g$.
\end{proof}

Let us formulate a useful

\begin{definition}
\label{internal}
A family~(\ref{boli_mne_v_krku}) satisfying~(\ref{eq:12}) is an
  {\em internal operad\/} in the cyclic operad $\Box\, \oP$. Similarly~(\ref{1boli_mne_v_krku})
  satisfying~(\ref{eq:9})--(\ref{eq:10}) is an {\em internal operad\/} in the
  modular operad~$\oP$. These notions have obvious \ns\ analogs.
\end{definition}

The proof of~(\ref{Jarka_stale_bez_prace-b}) occupies the rest
of this section.
 Since it follows the scheme of the proof of
  Theorem~\ref{sec:modular-envelopes}, we only emphasize the
  differences. We must to show that each
  internal \ns\ cyclic operad
\begin{equation}
\label{boli_mne_v_krku1}
\varsigma(C) \in \nsP\(C;0\),\ C \in \Cyc,
\end{equation}
in $\nsBox\, \nsP$ uniquely
extends to an internal \ns\ modular operad 
\[
\varpi(\sfS;g) \in \nsP\(\sfS,g\),\ \sfS \in \MultCyc \! \times\! \bbN
\]
in $\nsP$ 
such that $\varpi(C;0) =  \varsigma(C)$ for $C \in \Cyc \subset
\MultCyc$.

As in the proof of Theorem~\ref{sec:modular-envelopes}, for a \ns\
modular graph $\Gamma$ with $\Leg(\Gamma) = \sfS$ whose local genera
vanish we define $\varpi_\Gamma \in \nsP\(\sfS;g\)$ using the
contractions~(\ref{eq:Jarusko_prijed!}) along $\Gamma$ in $\nsP$. We then need to prove an analog
of~(\ref{eq:11}):
\begin{equation}
\label{eq:1bis1}
\begin{array}{cc}
\hbox{$\varpi_\Gamma$ depends only on the type of $\Gamma$, i.e.\ on
  the multicyclically }
\\
\hbox{ordered set 
$\sfS = \Leg(\Gamma)$ and the genus $g = g(\Gamma)$.}
\end{array}
\end{equation}

By choosing a maximal   
subtree $T$ of $\Gamma$ we again reduce (\ref{eq:1bis1}) to graphs $K$ 
with only one vertex. This time, $K$ is not a `tick'
but a pancake
\begin{equation}
\label{Jaruska_se_uz_na_chalupe_prudi.}
\raisebox{-2.8em}{}
\psscalebox{.2.2}
{
\begin{pspicture}(0,0)(12.818973,6.)
\pscircle[linecolor=black, linewidth=0.22, dimen=outer](6.7040453,-0.050000153){4.7}
\psline[linecolor=black, linewidth=0.162](2.1040454,-0.95000017)(11.204045,1.2499999)(11.204045,1.2499999)
\psline[linecolor=black, linewidth=0.162](3.0040455,2.6499999)(9.304046,-4.05)(9.304046,-4.05)
\psline[linecolor=black, linewidth=0.162](9.104046,3.85)(4.4040456,-4.25)(4.4040456,-4.25)
\psbezier[linecolor=black, linewidth=0.162](7.1040454,-4.65)(6.27657,-5.2115016)(7.853254,-2.4092567)(8.5040455,-1.6500001)(9.154837,-0.89074355)(11.726931,1.156183)(11.304046,0.24999985)
\psbezier[linecolor=black, linewidth=0.162](4.0040455,3.7499998)(4.0040455,2.9499998)(8.104046,3.55)(8.104046,4.35)
\psline[linecolor=black, linewidth=0.162](2.1040454,0.7499998)(10.204045,2.9499998)(10.204045,2.9499998)
\psline[linecolor=black, linewidth=0.162](8.5040455,4.25)(9.304046,5.5499997)(9.304046,5.5499997)
\psline[linecolor=black, linewidth=0.162](9.704045,3.4499998)(10.804046,4.5499997)(10.804046,4.5499997)
\psline[linecolor=black, linewidth=0.162](10.804046,2.1499999)(12.304046,2.7499998)(12.304046,2.7499998)
\psline[linecolor=black, linewidth=0.162](11.204045,-1.4500002)(12.804046,-1.7500001)(12.804046,-1.7500001)
\psline[linecolor=black, linewidth=0.162](10.204045,-3.0500002)(11.604046,-4.15)(11.604046,-4.15)
\psline[linecolor=black, linewidth=0.162](8.204045,-4.3500004)(9.104046,-6.05)(9.104046,-6.05)
\psline[linecolor=black, linewidth=0.162](5.5040455,-4.4500003)(4.9040456,-6.15)(5.0040455,-6.15)
\psline[linecolor=black, linewidth=0.162](3.5040455,-3.45)(2.3040454,-5.15)(2.3040454,-5.15)
\psline[linecolor=black, linewidth=0.162](2.6040454,-2.45)(0.9040454,-3.5500002)(0.9040454,-3.5500002)
\psline[linecolor=black, linewidth=0.162](2.0040455,0.049999848)(0.00404541,-0.050000153)(0.00404541,-0.050000153)
\psline[linecolor=black, linewidth=0.162](2.3040454,1.7499999)(0.6040454,2.55)(0.6040454,2.55)
\psline[linecolor=black, linewidth=0.162](3.5040455,3.2499998)(2.2040453,4.85)(2.2040453,4.85)
\psline[linecolor=black, linewidth=0.162](5.5040455,4.45)(5.0040455,6.1499996)(5.0040455,6.1499996)
\psdots[linecolor=black, dotstyle=o, dotsize=0.4](9.204045,-3.95)
\psdots[linecolor=black, dotstyle=o, dotsize=0.4](11.404045,0.44999984)
\psdots[linecolor=black, dotstyle=o, dotsize=0.4](11.104046,1.2499999)
\psdots[linecolor=black, dotstyle=o, dotsize=0.4](10.204045,2.9499998)
\psdots[linecolor=black, dotstyle=o, dotsize=0.4](9.104046,3.85)
\psdots[linecolor=black, dotstyle=o, dotsize=0.4](8.0040455,4.25)
\psdots[linecolor=black, dotstyle=o, dotsize=0.4](4.1040454,3.6499999)
\psdots[linecolor=black, dotstyle=o, dotsize=0.4](3.0040455,2.6499999)
\psdots[linecolor=black, dotstyle=o, dotsize=0.4](2.2040453,0.7499998)
\psdots[linecolor=black, dotstyle=o, dotsize=0.4](2.2040453,-0.95000017)
\psdots[linecolor=black, dotstyle=o, dotsize=0.4](4.5040455,-4.15)
\psdots[linecolor=black, dotstyle=o, dotsize=0.4](6.9040456,-4.65)
\end{pspicture}
}
\end{equation}
with the internal ribs marking the half-edges which have been contracted.

\begin{example}
\label{sec:modular-envelopes-5}
The only  pancake $K$ with the operadic genus $g=0$ 
has $b(K) = 0$. It is a
circle with the circumference decorated by a cyclically ordered set $C
:= \Leg(K)$. In this case $\varpi_K = \varsigma(C)$, so~(\ref{eq:11}) is
satisfied trivially. 
\end{example}

\begin{example}
\label{sec:modular-envelopes-4}
There is only one type of a  pancake with $g= 1$, the left one in
Figure~\ref{fig:zitra_do_Tabora_na_Vivat_tour}.  
\begin{figure}[t]
  \centering
\rule{0em}{8em}
\hskip 5em
\psscalebox{.3 .3}{ 
\begin{pspicture}(5,-5)(0,0)
\pscircle[linecolor=black, linewidth=0.22, dimen=outer](5.725,-0.1775){4.7}
\pscircle[linecolor=black, linewidth=0.04, dimen=outer](5.725,-0.1775){4.1}
\pscircle[linecolor=black, linewidth=0.04, dimen=outer](4.495,3.0525){0.65}
\pscircle[linecolor=black, linewidth=0.04, dimen=outer](6.785,3.1625){0.6}
\pscircle[linecolor=black, linewidth=0.04, dimen=outer](4.625,-3.4775){0.6}
\psline[linecolor=white, linewidth=0.1](2.125,1.5225)(2.125,1.6225)(2.125,1.2225)(2.225,1.3225)(2.225,1.3225)
\pscircle[linecolor=white, linewidth=0.1, dimen=outer](3.965,2.2825){0.1}
\pscircle[linecolor=white, linewidth=0.1, dimen=outer](4.045,3.4025){0.1}
\psline[linecolor=white, linewidth=0.1](3.925,-2.4775)(3.625,-2.7775)(3.625,-2.7775)
\psline[linecolor=white, linewidth=0.1](3.825,-3.6775)(3.925,-3.7775)
\psline[linecolor=white, linewidth=0.1](6.925,3.6225)(7.225,3.5225)(7.225,3.5225)
\psline[linecolor=white, linewidth=0.1](9.125,2.0225)(9.125,1.8225)
\psline[linecolor=white, linewidth=0.1](6.125,3.5225)(5.925,3.2225)
\psline[linecolor=white, linewidth=0.2](3.825,-2.5775)(4.025,-2.6775)
\psline[linecolor=white, linewidth=0.1](9.425,-3.2775)(9.225,-3.5775)(9.225,-3.5775)
\psline[linecolor=white, linewidth=0.1](9.425,-1.6775)(9.325,-1.9775)
\psline[linecolor=white, linewidth=0.1](3.825,2.5225)(3.825,2.3225)
\psline[linecolor=black, linewidth=0.04, arrowsize=0.04cm 4.0,arrowlength=3.0,arrowinset=0.0]{<-}(9.125,2.1225)(9.325,1.7225)
\psline[linecolor=black, linewidth=0.04, arrowsize=0.04cm 4.0,arrowlength=3.0,arrowinset=0.0]{<-}(8.025,-3.5775)(7.725,-3.6775)
\psline[linecolor=black, linewidth=0.04, arrowsize=0.04cm 4.0,arrowlength=3.0,arrowinset=0.0]{<-}(2.025,-1.8775)(1.925,-1.5775)
\psline[linecolor=black, linewidth=0.04](6.225,3.1225)(6.225,-3.3775)
\pscircle[linecolor=black, linewidth=0.04, dimen=outer](6.825,-3.4775){0.6}
\psline[linecolor=white, linewidth=0.08](5.325,-4.1775)(6.025,-4.1775)
\psline[linecolor=white, linewidth=0.08](6.525,-4.0775)(6.125,-4.3775)
\psline[linecolor=white, linewidth=0.08](2.725,2.5225)(2.525,2.2225)
\psline[linecolor=white, linewidth=0.08](2.625,-2.7775)(2.925,-2.9775)
\psline[linecolor=white, linewidth=0.08](3.725,-1.9775)(3.725,-2.5775)(3.725,-2.5775)
\psline[linecolor=white, linewidth=0.08](8.425,2.7225)(8.625,2.6225)
\psline[linecolor=white, linewidth=0.08](8.825,-2.4775)(8.725,-2.2775)
\psline[linecolor=white, linewidth=0.08](8.725,-2.7775)(8.625,-2.9775)(8.625,-2.9775)
\psline[linecolor=white, linewidth=0.08](5.425,3.8225)(5.925,3.8225)
\psline[linecolor=white, linewidth=0.08](6.525,3.2225)(6.525,2.7225)
\psline[linecolor=white, linewidth=0.08](4.825,-3.1775)(4.825,-3.6775)
\psline[linecolor=white, linewidth=0.08](4.825,-3.4775)(4.725,-3.4775)
\psline[linecolor=white, linewidth=0.144](2.625,-2.6775)(2.625,-2.6775)
\psline[linecolor=white, linewidth=0.144](5.825,3.5225)(5.825,3.6225)(5.925,3.6225)
\psline[linecolor=white, linewidth=0.144](5.525,-4.0775)(5.725,-4.0775)
\psline[linecolor=white, linewidth=0.144](7.725,-2.2775)(7.625,-2.2775)
\psline[linecolor=black, linewidth=0.04, arrowsize=0.04cm 4.0,arrowlength=3.0,arrowinset=0.0]{<-}(6.2245874,0.5224997)(6.225413,0.02250034)
\psline[linecolor=black, linewidth=0.04, arrowsize=0.04cm 4.0,arrowlength=3.0,arrowinset=0.0]{<-}(5.125,-0.0775)(5.125,0.4225)
\rput(0.125,0.2225){\psscalebox{3 3}{$X$}}
\psline[linecolor=black, linewidth=0.04](5.165,3.0825)(5.165,-3.4175)
\psline[linecolor=white, linewidth=0.08](4.025,-3.7775)(4.425,-3.9775)
\psline[linecolor=white, linewidth=0.08](5.025,-2.6775)(5.025,-3.3775)
\psline[linecolor=white, linewidth=0.08](5.125,-3.0775)(5.025,-3.2775)
\psline[linecolor=white, linewidth=0.08](6.325,-3.0775)(6.325,-3.3775)
\psline[linecolor=white, linewidth=0.08](6.325,-3.1775)(6.325,-3.3775)
\psline[linecolor=white, linewidth=0.08](7.025,-3.8775)(7.925,-3.5775)(8.025,-2.4775)
\psline[linecolor=white, linewidth=0.08](7.025,-3.9775)(7.425,-3.7775)(7.725,-3.6775)(7.725,-3.6775)
\psline[linecolor=white, linewidth=0.08](7.325,-3.6775)(7.625,-3.4775)(7.625,-3.3775)
\psline[linecolor=white, linewidth=0.08](5.025,3.0225)(5.125,3.0225)(5.125,3.0225)
\psline[linecolor=white, linewidth=0.08](5.025,2.9225)(5.125,3.0225)(5.125,3.0225)
\psline[linecolor=white, linewidth=0.08](5.125,2.9225)(5.025,2.6225)
\psline[linecolor=white, linewidth=0.08](6.325,3.2225)(6.325,3.0225)(6.325,2.8225)(6.325,2.8225)
\psline[linecolor=white, linewidth=0.08](4.325,3.7225)(4.825,3.7225)(7.125,3.8225)(7.125,3.8225)
\psline[linecolor=white, linewidth=0.08](4.325,-4.0775)(5.025,-4.1775)
\psline[linecolor=white, linewidth=0.08](6.225,-4.0775)(7.025,-4.1775)(7.025,-4.1775)
\psline[linecolor=white, linewidth=0.16](3.925,3.3225)(4.025,2.6225)(3.825,3.2225)(4.125,2.5225)(4.325,2.5225)(4.325,2.3225)(4.425,2.5225)(4.525,2.4225)(4.725,2.4225)(4.825,2.5225)(5.025,2.8225)(5.025,2.8225)
\psline[linecolor=white, linewidth=0.16](3.925,3.3225)(3.925,3.3225)
\psline[linecolor=white, linewidth=0.16](4.625,3.8225)(5.525,3.9225)(6.125,3.9225)(6.525,3.8225)(6.525,3.8225)
\psline[linecolor=white, linewidth=0.16](5.025,-4.1775)(5.925,-4.2775)(6.725,-4.1775)(6.725,-4.1775)
\psline[linecolor=white, linewidth=0.16](6.325,-3.0775)(6.725,-2.8775)(7.025,-2.8775)(7.225,-3.0775)(7.325,-3.1775)(7.425,-3.3775)(7.325,-3.7775)(7.625,-3.5775)
\psline[linecolor=white, linewidth=0.16](6.325,2.9225)(6.425,2.6225)(6.625,2.6225)(6.825,2.5225)(7.025,2.6225)(7.125,2.7225)(7.225,2.8225)(7.325,3.0225)(7.425,3.1225)(7.325,3.3225)(7.225,3.5225)(7.225,3.5225)
\psline[linecolor=white, linewidth=0.16](4.925,-2.9775)(4.925,-3.0775)(4.825,-2.8775)(4.525,-2.8775)(4.325,-2.9775)(4.125,-3.1775)(4.025,-3.4775)(4.125,-3.7775)(4.125,-3.7775)
\psline[linecolor=black, linewidth=0.12](5.725,4.4225)(5.725,-4.7775)(5.725,-4.7775)
\psdots[linecolor=black, dotstyle=o, dotsize=0.4](5.725,4.4225)
\psdots[linecolor=black, dotstyle=o, dotsize=0.4](5.725,-4.7775)
\rput(11.125,0.2225){\psscalebox{3 3}{$Y$}}
\rput[b](5.725,5.1225){\psscalebox{3 3}{$u$}}
\rput[t](5.725,-5.2775){\psscalebox{3 3}{$v$}}
\end{pspicture}
}  
\psscalebox{.3.3} 
{
\begin{pspicture}(25,-4.9)(1.5,0)
\pscircle[linecolor=black, linewidth=0.22, dimen=outer](15.418964,-0.1){4.7}
\pscircle[linecolor=black, linewidth=0.04, dimen=outer](15.418964,-0.1){4.1}
\pscircle[linecolor=black, linewidth=0.04, dimen=outer](15.018964,3.4){0.6}
\pscircle[linecolor=black, linewidth=0.04, dimen=outer](15.698964,-3.58){0.6}
\pscircle[linecolor=black, linewidth=0.04, dimen=outer](12.888965,2.23){0.65}
\pscircle[linecolor=black, linewidth=0.04, dimen=outer](17.778965,2.44){0.6}
\pscircle[linecolor=black, linewidth=0.04, dimen=outer](12.918964,-2.5){0.6}
\psline[linecolor=white, linewidth=0.1](11.818964,1.6)(11.818964,1.7)(11.818964,1.3)(11.918964,1.4)(11.918964,1.4)
\pscircle[linecolor=white, linewidth=0.1, dimen=outer](13.658964,2.36){0.1}
\pscircle[linecolor=white, linewidth=0.1, dimen=outer](13.738964,3.48){0.1}
\pscircle[linecolor=white, linewidth=0.1, dimen=outer](12.878964,1.74){0.1}
\psline[linecolor=white, linewidth=0.1](11.818964,1.4)(11.918964,1.2)
\psline[linecolor=white, linewidth=0.1](12.818964,-1.8)(12.618964,-1.9)
\psline[linecolor=white, linewidth=0.1](13.618964,-2.4)(13.318964,-2.7)(13.318964,-2.7)
\psline[linecolor=white, linewidth=0.1](13.518964,-3.6)(13.618964,-3.7)
\psline[linecolor=white, linewidth=0.1](16.618965,3.7)(16.918964,3.6)(16.918964,3.6)
\psline[linecolor=white, linewidth=0.1](17.218964,3.7)(17.418964,3.4)(17.418964,3.4)
\psline[linecolor=white, linewidth=0.1](17.918964,2.2)(17.718964,2.0)(17.718964,1.9)
\psline[linecolor=white, linewidth=0.1](17.318964,-2.4)(17.618965,-2.7)
\psline[linecolor=white, linewidth=0.1](18.818964,2.1)(18.818964,1.9)
\psline[linecolor=white, linewidth=0.1](17.118965,2.8)(16.918964,2.5)
\psline[linecolor=white, linewidth=0.2](13.518964,-2.5)(13.718965,-2.6)
\psline[linecolor=white, linewidth=0.1](19.118965,-3.2)(18.918964,-3.5)(18.918964,-3.5)
\psline[linecolor=white, linewidth=0.1](19.118965,-1.6)(19.018965,-1.9)
\psline[linecolor=white, linewidth=0.1](13.518964,2.6)(13.518964,2.4)
\psline[linecolor=black, linewidth=0.04, arrowsize=0.04cm 4.0,arrowlength=3.0,arrowinset=0.0]{<-}(18.818964,2.2)(19.018965,1.8)
\psline[linecolor=black, linewidth=0.04, arrowsize=0.04cm 4.0,arrowlength=3.0,arrowinset=0.0]{<-}(17.718964,-3.5)(17.418964,-3.6)
\psline[linecolor=black, linewidth=0.04, arrowsize=0.04cm 4.0,arrowlength=3.0,arrowinset=0.0]{<-}(11.718965,-1.8)(11.618964,-1.5)
\psline[linecolor=black, linewidth=0.04, arrowsize=0.04cm 4.0,arrowlength=3.0,arrowinset=0.0]{<-}(17.718964,-3.5)(17.518965,-3.6)
\pscircle[linecolor=black, linewidth=0.04, dimen=outer](15.758965,3.36){0.6}
\pscircle[linecolor=black, linewidth=0.04, dimen=outer](15.038964,-3.62){0.6}
\psline[linecolor=black, linewidth=0.04](14.418964,3.4)(14.418964,-3.5)(14.418964,-3.5)
\psline[linecolor=black, linewidth=0.04](16.318964,3.3)(16.318964,-3.6)(16.318964,-3.6)
\psline[linecolor=black, linewidth=0.04](13.518964,2.2)(13.518964,-2.6)
\pscircle[linecolor=black, linewidth=0.04, dimen=outer](17.818964,-2.6){0.6}
\psline[linecolor=black, linewidth=0.04](17.258965,2.26)(17.258965,-2.54)
\psline[linecolor=white, linewidth=0.08](0.018964233,-1.3)(1.3189642,-2.0)
\psline[linecolor=white, linewidth=0.08](12.318964,2.7)(12.818964,3.0)(12.818964,3.0)
\psline[linecolor=white, linewidth=0.08](14.918964,4.1)(14.318964,3.7)
\psline[linecolor=white, linewidth=0.08](12.118964,-2.7)(13.018964,-3.4)
\psline[linecolor=white, linewidth=0.08](14.218965,-3.8)(14.718965,-4.2)
\psline[linecolor=white, linewidth=0.08](15.018964,-4.1)(15.718965,-4.1)
\psline[linecolor=white, linewidth=0.08](16.218964,-4.0)(15.818964,-4.3)
\psline[linecolor=white, linewidth=0.08](17.818964,-3.3)(18.318964,-3.1)(18.318964,-3.1)
\psline[linecolor=white, linewidth=0.08](17.718964,3.2)(18.318964,2.9)(18.318964,2.9)
\psline[linecolor=white, linewidth=0.08](12.418964,2.6)(12.218965,2.3)
\psline[linecolor=white, linewidth=0.08](12.318964,-2.7)(12.618964,-2.9)
\psline[linecolor=white, linewidth=0.08](13.418964,2.2)(13.418964,1.7)
\psline[linecolor=white, linewidth=0.08](13.418964,-1.9)(13.418964,-2.5)(13.418964,-2.5)
\psline[linecolor=white, linewidth=0.08](15.618964,4.1)(16.318964,3.8)(16.318964,3.8)
\psline[linecolor=white, linewidth=0.08](17.318964,2.4)(17.318964,2.0)
\psline[linecolor=white, linewidth=0.08](17.318964,-2.0)(17.318964,-2.6)
\psline[linecolor=white, linewidth=0.08](18.118965,2.8)(18.318964,2.7)
\psline[linecolor=white, linewidth=0.08](18.518965,-2.4)(18.418964,-2.2)
\psline[linecolor=white, linewidth=0.08](18.418964,-2.7)(18.318964,-2.9)(18.318964,-2.9)
\psline[linecolor=white, linewidth=0.08](15.118964,3.9)(15.618964,3.9)
\psline[linecolor=white, linewidth=0.08](14.518964,3.6)(14.618964,3.1)
\psline[linecolor=white, linewidth=0.08](14.518964,3.5)(14.518964,3.0)
\psline[linecolor=white, linewidth=0.08](16.218964,3.3)(16.218964,2.8)
\psline[linecolor=white, linewidth=0.08](14.518964,-3.1)(14.518964,-3.6)
\psline[linecolor=white, linewidth=0.08](14.518964,-3.4)(14.418964,-3.4)
\psline[linecolor=white, linewidth=0.08](16.218964,-3.0)(16.118965,-3.5)(16.118965,-3.5)(16.118965,-3.5)(16.218964,-3.3)(16.218964,-3.5)(16.018965,-3.0)
\psline[linecolor=white, linewidth=0.144](12.218965,2.3)(12.318964,2.0)(12.418964,1.8)(12.618964,1.7)(12.918964,1.6)(13.418964,1.8)(13.418964,1.8)
\psline[linecolor=white, linewidth=0.144](12.418964,-2.6)(12.318964,-2.6)(12.418964,-2.2)(12.618964,-2.0)(12.918964,-1.9)(13.018964,-1.9)(13.418964,-2.2)(13.418964,-2.4)
\psline[linecolor=white, linewidth=0.144](13.418964,2.4)(13.418964,1.8)
\psline[linecolor=white, linewidth=0.144](12.318964,-2.6)(12.318964,-2.6)
\psline[linecolor=white, linewidth=0.144](12.818964,3.1)(12.718965,3.0)(13.118964,3.3)(13.618964,3.5)(13.518964,3.6)(13.918964,3.7)(14.218965,3.8)(14.618964,3.9)(14.518964,3.8)
\psline[linecolor=white, linewidth=0.144](14.518964,3.4)(14.518964,3.1)
\psline[linecolor=white, linewidth=0.144](14.518964,-3.1)(14.518964,-3.6)
\psline[linecolor=white, linewidth=0.144](14.618964,-3.3)(14.718965,-3.0)(14.818964,-3.1)(15.018964,-3.0)(15.318964,-3.1)(15.518964,-3.3)(15.618964,-3.5)(15.618964,-3.7)(15.418964,-4.0)(15.518964,-4.0)(15.218965,-3.9)(15.118964,-3.6)(15.218965,-3.2)(15.618964,-2.9)(15.618964,-3.1)(15.818964,-3.0)(16.118965,-3.2)(16.218964,-3.2)(16.218964,-3.6)
\psline[linecolor=white, linewidth=0.144](14.618964,3.0)(14.518964,3.1)(14.818964,2.8)(15.118964,2.9)(15.118964,2.8)(15.418964,3.0)(15.618964,3.3)(15.618964,3.7)(15.318964,3.9)(14.918964,3.8)
\psline[linecolor=white, linewidth=0.144](15.418964,3.8)(15.218965,3.7)(15.218965,3.2)(15.318964,3.0)(15.618964,2.8)(16.018965,2.8)(16.218964,3.0)(16.218964,3.4)(16.218964,3.4)
\psline[linecolor=white, linewidth=0.144](15.518964,3.6)(15.518964,3.7)(15.618964,3.7)
\psline[linecolor=white, linewidth=0.144](15.218965,-4.0)(15.418964,-4.0)
\psline[linecolor=white, linewidth=0.144](17.318964,2.1)(17.618965,1.9)(18.018965,1.9)(18.318964,2.1)(18.318964,2.4)(18.318964,2.6)(18.318964,2.7)(18.318964,2.7)
\psline[linecolor=white, linewidth=0.144](13.018964,-3.3)(13.218965,-3.7)(13.218965,-3.5)(13.618964,-3.7)(13.818964,-3.9)(14.518964,-4.1)(14.518964,-4.1)
\psline[linecolor=white, linewidth=0.144](16.118965,-4.1)(16.218964,-4.1)(17.118965,-3.8)(17.318964,-3.8)(18.018965,-3.3)(17.118965,-3.8)(17.118965,-3.6)(17.418964,-3.5)(17.218964,-3.7)(17.318964,-3.6)
\psline[linecolor=white, linewidth=0.144](17.418964,-2.2)(17.518965,-2.1)(17.818964,-2.0)(18.118965,-2.1)(18.318964,-2.2)(18.218964,-2.3)(18.418964,-2.4)(18.418964,-2.6)(18.418964,-2.6)
\psline[linecolor=white, linewidth=0.144](17.418964,-2.2)(17.318964,-2.2)
\psline[linecolor=white, linewidth=0.144](16.018965,4.0)(16.618965,3.8)(17.118965,3.6)(18.018965,3.1)(18.018965,3.1)
\psline[linecolor=black, linewidth=0.12](13.918964,4.1)(13.918964,-4.5)(13.918964,-4.5)
\psline[linecolor=black, linewidth=0.12](16.818964,4.3)(16.818964,-4.6)
\psdots[linecolor=black, dotstyle=o, dotsize=0.4](13.918964,-4.4)
\psdots[linecolor=black, dotstyle=o, dotsize=0.4](16.818964,-4.5)
\psdots[linecolor=black, dotstyle=o, dotsize=0.4](16.818964,4.3)
\psdots[linecolor=black, dotstyle=o, dotsize=0.4](13.918964,4.2)
\psline[linecolor=black, linewidth=0.04, arrowsize=0.04cm 4.0,arrowlength=3.0,arrowinset=0.0]{<-}(16.318964,0.7)(16.318964,0.0)
\psline[linecolor=black, linewidth=0.04, arrowsize=0.04cm 4.0,arrowlength=3.0,arrowinset=0.0]{<-}(14.418964,0.1)(14.418964,0.6)
\psline[linecolor=black, linewidth=0.04, arrowsize=0.04cm 4.0,arrowlength=3.0,arrowinset=0.0]{<-}(13.518964,0.7)(13.518964,0.5)
\psline[linecolor=black, linewidth=0.04, arrowsize=0.04cm 4.0,arrowlength=3.0,arrowinset=0.0]{<-}(17.218964,0.1)(17.218964,0.5)
\rput(9.818964,0.3){\psscalebox{3 3}{$X$}}
\rput(20.918964,0.2){\psscalebox{3 3}{$Z$}}
\rput[b](15.218965,5.4){\psscalebox{3 3}{$Y_1$}}
\rput[t](15.318964,-5.4){\psscalebox{3 3}{$Y_2$}}
\rput[r](13.818964,5.1){\psscalebox{3 3}{$u'$}}
\rput[r](13.918964,-5.3){\psscalebox{3 3}{$u''$}}
\rput[l](16.818964,5.1){\psscalebox{3 3}{$v'$}}
\rput[l](16.918964,-5.3){\psscalebox{3 3}{$v''$}}
\end{pspicture}
}
\psscalebox{.3.3} 
{
\begin{pspicture}(20,-4.5)(3,0)
\pscircle[linecolor=black, linewidth=0.22, dimen=outer](6.24,0.2025){4.7}
\psline[linecolor=black, linewidth=0.12](3.04,3.3025)(9.84,-3.0975)(9.84,-3.0975)
\psline[linecolor=black, linewidth=0.12](2.94,-2.9975)(9.34,3.7025)(9.34,3.7025)
\psdots[linecolor=black, dotstyle=o, dotsize=0.4](9.24,3.7025)
\psdots[linecolor=black, dotstyle=o, dotsize=0.4](2.94,-2.9975)
\psdots[linecolor=black, dotstyle=o, dotsize=0.4](2.94,3.4025)
\psdots[linecolor=black, dotstyle=o, dotsize=0.4](9.64,-2.8975)
\rput[b](6.14,5.5025){\psscalebox{3 3}{$X$}}
\rput(6.14,-5.4975){\psscalebox{3 3}{$Z$}}
\rput[l](0.14,0.5025){\psscalebox{3 3}{$U$}}
\rput[r](12.14,0.5025){\psscalebox{3 3}{$Y$}}
\pscircle[linecolor=black, linewidth=0.04, dimen=outer](6.24,0.2025){4.1}
\pscircle[linecolor=black, linewidth=0.04, dimen=outer](7.64,3.4025){0.6}
\pscircle[linecolor=black, linewidth=0.04, dimen=outer](3.08,-1.3375){0.6}
\pscircle[linecolor=black, linewidth=0.04, dimen=outer](9.32,-1.3775){0.6}
\pscircle[linecolor=black, linewidth=0.04, dimen=outer](4.61,3.2325){0.65}
\pscircle[linecolor=black, linewidth=0.04, dimen=outer](3.2,1.9425){0.6}
\pscircle[linecolor=black, linewidth=0.04, dimen=outer](9.11,2.2325){0.55}
\pscircle[linecolor=black, linewidth=0.04, dimen=outer](8.1,-2.7575){0.6}
\pscircle[linecolor=black, linewidth=0.04, dimen=outer](4.44,-2.7975){0.6}
\psline[linecolor=black, linewidth=0.04](4.14,2.8025)(8.84,-1.6975)(8.84,-1.6975)
\psline[linecolor=black, linewidth=0.04](8.14,3.1025)(3.34,-1.8975)
\psline[linecolor=black, linewidth=0.04](3.64,2.3025)(8.54,-2.3975)(8.54,-2.3975)
\psline[linecolor=black, linewidth=0.04](8.74,2.6025)(3.94,-2.4975)(3.94,-2.4975)
\psline[linecolor=white, linewidth=0.1](2.64,1.9025)(2.64,2.0025)(2.64,1.6025)(2.74,1.7025)(2.74,1.7025)
\pscircle[linecolor=white, linewidth=0.1, dimen=outer](4.48,2.6625){0.1}
\pscircle[linecolor=white, linewidth=0.1, dimen=outer](4.56,3.7825){0.1}
\pscircle[linecolor=white, linewidth=0.1, dimen=outer](3.7,2.0425){0.1}
\psline[linecolor=white, linewidth=0.1](2.74,1.6025)(2.74,1.6025)(2.94,1.4025)(3.14,1.4025)(3.44,1.3025)(3.34,1.4025)(3.54,1.5025)(3.74,1.8025)(3.84,2.0025)(3.74,1.7025)
\psline[linecolor=white, linewidth=0.1](3.74,1.7025)(3.44,1.4025)
\psline[linecolor=white, linewidth=0.1](2.64,1.7025)(2.74,1.5025)
\psline[linecolor=white, linewidth=0.1](2.54,-1.2975)(2.54,-1.0975)(2.74,-0.7975)(2.74,-0.8975)(2.84,-0.6975)(2.84,-0.7975)(2.54,-1.2975)(2.54,-1.2975)
\psline[linecolor=white, linewidth=0.1](3.64,-1.4975)(3.44,-1.5975)
\psline[linecolor=white, linewidth=0.1](4.44,-2.0975)(4.14,-2.3975)(4.14,-2.3975)
\psline[linecolor=white, linewidth=0.1](4.34,-3.2975)(4.44,-3.3975)
\psline[linecolor=white, linewidth=0.1](4.54,3.8025)(4.74,3.9025)(4.74,3.9025)
\psline[linecolor=white, linewidth=0.1](7.44,4.0025)(7.74,3.9025)(7.74,3.9025)
\psline[linecolor=white, linewidth=0.1](8.04,4.0025)(8.24,3.7025)(8.24,3.7025)
\psline[linecolor=white, linewidth=0.1](8.74,2.5025)(8.54,2.3025)(8.54,2.2025)
\psline[linecolor=white, linewidth=0.1](9.34,2.8025)(9.74,2.5025)(9.74,2.5025)
\psline[linecolor=white, linewidth=0.1](9.54,-1.9975)(10.04,-1.5975)
\psline[linecolor=white, linewidth=0.1](2.44,-1.5975)(2.74,-1.8975)
\psline[linecolor=white, linewidth=0.1](3.74,-2.9975)(3.94,-3.1975)
\psline[linecolor=white, linewidth=0.1](8.14,-2.0975)(8.44,-2.3975)
\psline[linecolor=white, linewidth=0.1](8.74,-2.8975)(8.54,-3.3975)
\psline[linecolor=white, linewidth=0.1](9.64,2.4025)(9.64,2.2025)
\psline[linecolor=white, linewidth=0.1](7.94,3.1025)(7.74,2.8025)
\psline[linecolor=white, linewidth=0.2](7.44,4.0025)(7.24,3.7025)(7.04,3.3025)(7.24,3.1025)(7.54,2.8025)(7.74,2.9025)
\psline[linecolor=white, linewidth=0.2](4.64,3.9025)(5.14,3.7025)(5.14,3.3025)(5.34,3.0025)(5.14,2.9025)(4.94,2.7025)(4.54,2.6025)
\psline[linecolor=white, linewidth=0.2](2.84,-0.7975)(3.44,-0.7975)(3.54,-1.0975)(3.64,-1.4975)
\psline[linecolor=white, linewidth=0.2](4.44,-2.1975)(4.44,-2.1975)(4.84,-2.3975)(5.04,-2.7975)(4.84,-3.1975)(4.74,-3.2975)(4.44,-3.2975)(4.64,-3.2975)
\psline[linecolor=white, linewidth=0.2](4.34,-2.1975)(4.54,-2.2975)
\psline[linecolor=white, linewidth=0.2](8.14,-2.0975)(7.74,-2.3975)(7.54,-2.5975)(7.54,-2.7975)(7.64,-3.1975)(8.34,-3.2975)
\psline[linecolor=white, linewidth=0.2](8.64,-1.2975)(8.94,-1.5975)
\psline[linecolor=white, linewidth=0.2](8.54,2.3025)(8.84,1.8025)(9.24,1.7025)(9.54,2.0025)(9.64,2.2025)(9.64,2.2025)(9.64,2.2025)
\psline[linecolor=white, linewidth=0.2](8.74,-1.2975)(8.94,-0.8975)(9.34,-0.7975)(9.64,-0.8975)(9.84,-1.0975)(9.84,-1.2975)(9.74,-1.4975)
\psline[linecolor=white, linewidth=0.2](9.74,-1.8975)(8.74,-2.9975)(9.14,-2.6975)
\psline[linecolor=white, linewidth=0.2](4.14,3.9025)(3.94,3.6025)
\psline[linecolor=white, linewidth=0.2](2.64,2.4025)(3.04,2.7025)
\psline[linecolor=white, linewidth=0.2](2.94,2.8025)(3.24,2.8025)(3.14,3.1025)(3.34,3.1025)(3.94,3.7025)(3.94,3.7025)
\psline[linecolor=white, linewidth=0.2](2.64,-1.8975)(3.74,-3.0975)(3.74,-3.0975)
\psline[linecolor=white, linewidth=0.2](8.34,3.8025)(9.04,3.1025)(9.44,2.8025)(9.44,2.8025)
\psline[linecolor=white, linewidth=0.2](8.24,3.9025)(8.34,3.7025)
\psline[linecolor=black, linewidth=0.12](3.18,-2.7375)(9.08,3.4654675)(9.043125,3.5025)
\psline[linecolor=black, linewidth=0.12](3.08,3.2625)(9.48,-2.7375)(9.48,-2.7375)
\psline[linecolor=white, linewidth=0.1](9.94,-2.8975)(9.74,-3.1975)(9.74,-3.1975)
\psline[linecolor=white, linewidth=0.1](9.94,-1.2975)(9.84,-1.5975)
\psline[linecolor=white, linewidth=0.1](4.34,2.9025)(4.34,2.7025)
\psline[linecolor=black, linewidth=0.04, arrowsize=0.04cm 4.0,arrowlength=3.0,arrowinset=0.0]{<-}(7.84,2.8025)(7.64,2.6025)
\psline[linecolor=black, linewidth=0.04, arrowsize=0.04cm 4.0,arrowlength=3.0,arrowinset=0.0]{<-}(8.84,-1.6975)(8.54,-1.3975)
\psline[linecolor=black, linewidth=0.04, arrowsize=0.04cm 4.0,arrowlength=3.0,arrowinset=0.0]{<-}(3.74,2.2025)(3.94,2.0025)
\psline[linecolor=black, linewidth=0.04, arrowsize=0.04cm 4.0,arrowlength=3.0,arrowinset=0.0]{<-}(4.14,-2.2975)(4.54,-1.8975)
\psline[linecolor=black, linewidth=0.04, arrowsize=0.04cm 4.0,arrowlength=3.0,arrowinset=0.0]{<-}(9.64,2.5025)(9.84,2.1025)
\psline[linecolor=black, linewidth=0.04, arrowsize=0.04cm 4.0,arrowlength=3.0,arrowinset=0.0]{<-}(8.54,-3.1975)(8.24,-3.2975)
\psline[linecolor=black, linewidth=0.04, arrowsize=0.04cm 4.0,arrowlength=3.0,arrowinset=0.0]{<-}(2.54,-1.4975)(2.44,-1.1975)
\psline[linecolor=black, linewidth=0.04, arrowsize=0.04cm 4.0,arrowlength=3.0,arrowinset=0.0]{<-}(4.34,3.8025)(5.04,4.1025)
\psline[linecolor=black, linewidth=0.04, arrowsize=0.04cm 4.0,arrowlength=3.0,arrowinset=0.0]{<-}(8.54,-3.1975)(8.34,-3.2975)
\psline[linecolor=white, linewidth=0.2](5.64,0.8025)(5.94,0.6025)
\psline[linecolor=white, linewidth=0.2](6.04,0.5025)(6.34,0.2025)(6.34,0.2025)
\psline[linecolor=white, linewidth=0.2](5.64,0.9025)(6.54,0.0025)
\psline[linecolor=white, linewidth=0.2](5.94,1.1025)(6.84,0.2025)
\psline[linecolor=white, linewidth=0.2](5.44,0.6025)(6.34,-0.2975)
\psline[linecolor=black, linewidth=0.12](6.74,1.0025)(5.14,-0.6975)(5.14,-0.6975)
\psline[linecolor=black, linewidth=0.04](6.24,1.1025)(5.44,0.3025)
\psline[linecolor=black, linewidth=0.04](6.84,0.6025)(6.04,-0.2975)
\rput(2.04,4.4025){\psscalebox{3 3}{$u'$}}
\rput(10.44,-3.6975){\psscalebox{3 3}{$u''$}}
\rput(2.04,-3.6975){\psscalebox{3 3}{$v''$}}
\rput(9.94,4.6025){\psscalebox{3 3}{$v'$}}
\end{pspicture}
}
\caption{\label{fig:zitra_do_Tabora_na_Vivat_tour}Three  pancakes.}
 \end{figure}
When its circumference is labeled by the ordered sets $X$ and $Y$ as
in the figure, then, by definition
\[
\varpi_K := \xi_{uv}\varsigma\big([XuYv]\big) \in
\nsP\bl[X],[Y];1\br.
\]
We  prove that $\varpi_K$ depends only
on the induced cyclically ordered sets $[X]$ and $[Y]$, not on the
particular orders of $X$ and $Y$.

Let, for instance, $X'$ be an ordered set such that $[X'] = [X]$ and $K'$ be
the pancake obtained from $K$ by replacing $X$ by $X'$.
We will show that
\begin{equation}
  \label{eq:Jaruska_porad_na_chlupe}
\varpi_K = \varpi_{K'},  
\end{equation}
where 
\[
\varpi_{K'} := \xi_{uv}\varsigma\big([X'uYv]\big) \in
\nsP\bl[X'],[Y];1\br = \nsP\bl[X],[Y];1\br.
\]

As we noticed in Remark~\ref{zase_mam_nutkani}, $[X'] = [X]$ if and
only if there are ordered sets $X_1$ and $X_2$ such that $X = X_1X_2$
and $X' = X_2X_1$. By the interchange (iv) of
Definition~\ref{opet_podleham},
\begin{equation}
\label{Jaruska_prijede_snad_v_nedeli}
\xi_{v'v''}\big\{\varsigma([u'v'X_1]) \ooo {u'}{u''}
\varsigma([X_2v''Yu''])\big\}
=
\xi_{u'u''}\big\{\varsigma([u'v'X_1]) \ooo {v'}{v''}
\varsigma([X_2v''Yu''])\big\}. 
\end{equation}
The corresponding term in the curly bracket in the left hand side equals 
\[
\varsigma\big([u'v'X_1]\big) \ooo {u'}{u''} \varsigma\big([X_2v''Yu'']\big)
= \varsigma\big([v'X_1X_2v''Y]\big)= \varsigma\big([X_1X_2v''Yv']\big) =
\varsigma\big([Xv''Yv']\big),
\]
while the term in the right hand side is
\[
\varsigma\big([u'v'X_1]\big) \ooo {v'}{v''} \varsigma\big([X_2v''Yu'']\big) =
\varsigma\big([X_1u'Yu''X_2]\big) = \varsigma\big([X_2X_1u'Yu'']\big) = \varsigma\big([X'u'Yu'']\big),
\]
thus~(\ref{Jaruska_prijede_snad_v_nedeli}) implies
\[
 \xi_{v'v''}\varsigma\big([Xv''Yv']\big) =\xi_{u'u''}\varsigma\big([X'u'Yu'']\big)
\]
which is~(\ref{eq:Jaruska_porad_na_chlupe}).  The independence of the
particular order of $Y$ can be proved similarly.
\end{example}

\begin{example}
\label{sec:modular-envelopes-3}
There are two types of  pancakes with $g=2$. The middle one in 
Figure~\ref{fig:zitra_do_Tabora_na_Vivat_tour} has
$b=3$ and 
\begin{subequations}
\begin{equation}
\label{eq:17}
\varpi_K
=\xi_{u'u''}\xi_{v'v''}\varsigma\big([Xu'Y_1v'Zv''Y_2u'']\big) \in
\nsP\bl[X],[Y_1Y_2],[Z];3\br.
\end{equation}
The second type
in the right hand side of
Figure~\ref{fig:zitra_do_Tabora_na_Vivat_tour} has $b=1$ and
\begin{equation}
\label{eq:18}
\varpi_K
=\xi_{u'u''}\xi_{v'v''}\varsigma\big(Xv' Yu''Zv''Uu'\big) \in
\nsP\bl[{\it UZYX}];2\br.
\end{equation}
\end{subequations}
We leave as an exercise on the axioms of \ns\ modular operads to prove
that the elements $\varpi_K$ in~(\ref{eq:17}) resp.\ in~(\ref{eq:18})
depend only on the cyclically ordered sets $[X]$, $[Y_1Y_2]$ and $[Z]$
resp.~$[{\it UZYX}]$.
\end{example}

We prove~(\ref{eq:1bis1}) by induction based on the following simple:

\begin{lemma}
\label{sec:modular-envelopes-2}
Let $K$ be a  pancake with $g > 0$. 
\begin{itemize}
\itemindent -1em
 \itemsep .3em 
\item [(i)]
If $b(K)=1$, then there exist a  pancake $K'$ obtained  by
removing one rib of $K$ such that  $b(K') = 2$.
\item [(ii)]
If $b(K)>1$, then there exist a  pancake $K'$ obtained from $K$ by
removing one rib that has $b(K') = b(K)-1$.
\end{itemize}
\end{lemma}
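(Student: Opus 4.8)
The plan is to encode a pancake $K$ combinatorially and reduce both claims to an elementary statement about what happens to a permutation when one transposition is deleted. Since $K$ is a pancake, its single vertex carries \emph{one} cyclic order; write $\Flag(K)$ for the set of its half-edges (the legs together with the two ends of each rib), put $n:=|\Flag(K)|$, and let $\zeta$ denote the cyclic-successor permutation, a single $n$-cycle on $\Flag(K)$. The involution $\sigma$ has the $g$ ribs as its transpositions and the legs as its fixed points. Tracing the oriented edge cycles of Proposition~\ref{Jarka_nevola} amounts to iterating $\varphi:=\zeta\sigma$, since $a_{i+1}=\zeta(\sigma(a_i))=\varphi(a_i)$; thus the boundary cycles $X_1,\dots,X_b$ are exactly the cycles of $\varphi$, and $b(K)$ is their number. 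Removing a rib $\{x,y\}$ leaves $\zeta$ unchanged and replaces $\sigma$ by $\sigma\cdot(x\,y)$, hence $\varphi$ by $\varphi':=\varphi\cdot(x\,y)$. By the standard fact that multiplying a permutation by a transposition changes the number of cycles by exactly one — splitting a cycle when $x,y$ lie in a common cycle, merging two cycles when they lie in different ones — we obtain
\[
b(K') =
\begin{cases}
b(K)+1 & \text{if $x,y$ lie in the same boundary cycle of $K$,}\\
b(K)-1 & \text{if $x,y$ lie in distinct boundary cycles of $K$.}
\end{cases}
\]

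Claim~(i) is then immediate: if $b(K)=1$ there is a single boundary cycle, so the two ends of \emph{any} rib lie in it; as $g>0$ a rib exists, and deleting it splits this cycle, giving $b(K')=2$. For~(ii) I must instead produce a rib whose two ends lie in \emph{different} boundary cycles, after which deleting it merges them and yields $b(K')=b(K)-1$. The key observation is that the cyclic-successor step always links the face of a half-edge $a$ to the face of $\zeta(a)$: if $a$ is a leg then $\varphi(a)=\zeta(a)$ puts $a$ and $\zeta(a)$ in the same cycle, while if $a$ is a rib-end then $\varphi(\sigma(a))=\zeta(a)$ puts $\sigma(a)$ and $\zeta(a)$ in one cycle, so the faces of $a$ and of $\zeta(a)$ are joined by the rib $\{a,\sigma(a)\}$. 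Consequently, were every rib to have both ends in a single boundary cycle, then $a$ and $\zeta(a)$ would lie in the same cycle of $\varphi$ for \emph{every} $a$; since $\zeta$ is a single $n$-cycle this would force all of $\Flag(K)$ into one boundary cycle, i.e.\ $b(K)=1$, contrary to the hypothesis $b(K)>1$. Hence some rib $\{x,y\}$ has its ends in distinct boundary cycles, and deleting it produces the required $K'$.

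The only genuine obstacle is the existence of this ``cross-face'' rib in part~(ii); equivalently, the connectedness of the graph whose vertices are the boundary cycles and whose edges are the ribs. The walk-along-$\zeta$ argument above is exactly what supplies it, the point being that a single cyclic order at the vertex forces this face graph to be connected. Everything else is bookkeeping with the transposition lemma, and one checks in passing that geometricity is preserved (removing a rib lowers $g$ by $1$, so $G=\tfrac12(g-b+1)$ drops by $1$ in case~(i) and is unchanged in case~(ii)).
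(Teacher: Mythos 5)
Your proof is correct and rests on the same underlying idea as the paper's: removing a rib changes the number of boundary cycles by exactly one, splitting a cycle when both rib-ends lie on it and merging two cycles when they lie on different ones, so the lemma reduces to exhibiting a rib of the right kind. The one point where you do strictly more than the paper is the existence of a ``cross-face'' rib in part~(ii) -- the paper reads this off the picture of regions between the ribs and calls it ``simple to see,'' whereas your walk along $\zeta$ actually proves that the face graph is connected; likewise your part~(i) avoids the paper's appeal to geometricity by observing directly that both ends of any rib lie on the unique boundary cycle.
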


\begin{proof}
  The case $b(K)=1$ may happen only when $g(K)$ is even, by the 
geometricity~(\ref{eq:Jaruska}). Thus by removing
  an {\em arbitrary\/} rib we obtain a pancake $K'$ with $b(K') =
  2$. This proves (i).

To prove (ii) we analyze the
pancake~(\ref{Jaruska_se_uz_na_chalupe_prudi.}) representing $K$. 
The legs of $K$ are irrelevant for the proof so we
ignore them. Let us inspect how the ribs enter the circumference of
the pancake.
The
oriented edge cycles~(\ref{zasil_jsem_polstarky}) 
are the boundaries of the regions between the ribs:
\begin{center}
\psscalebox{.4.4} 
{
\begin{pspicture}(0,-2.696111)(15.213337,2.696111)
\psline[linecolor=black, linewidth=0.16](2.106667,-2.494722)(13.106667,-2.494722)
\psline[linecolor=black, linewidth=0.16, linestyle=dotted, dotsep=0.10583334cm](13.006667,-2.494722)(15.206667,-2.494722)
\psline[linecolor=black, linewidth=0.16, linestyle=dotted, dotsep=0.10583334cm](2.206667,-2.494722)(-0.093332976,-2.494722)
\psline[linecolor=black, linewidth=0.12](4.106667,1.5052781)(4.106667,-2.494722)
\psline[linecolor=black, linewidth=0.12](11.106667,1.5052781)(11.106667,-2.494722)
\psline[linecolor=black, linewidth=0.12, linestyle=dotted, dotsep=0.10583334cm](4.106667,1.4052781)(4.106667,2.605278)
\psline[linecolor=black, linewidth=0.12, linestyle=dotted, dotsep=0.10583334cm](11.106667,1.5052781)(11.106667,2.7052782)
\psline[linecolor=black, linewidth=0.06, arrowsize=0.2cm 2.0,arrowlength=1.4,arrowinset=0.0]{<-}(5.106667,-0.49472192)(5.106667,1.5052781)
\psline[linecolor=black, linewidth=0.06, arrowsize=0.2cm 2.0,arrowlength=1.4,arrowinset=0.0]{<-}(10.106667,1.5052781)(10.106667,-0.49472192)
\psline[linecolor=black, linewidth=0.06, arrowsize=0.2cm 2.0,arrowlength=1.4,arrowinset=0.0]{<-}(9.106667,-1.4947219)(6.106667,-1.4947219)
\psline[linecolor=black, linewidth=0.06, arrowsize=0.2cm 2.0,arrowlength=1.4,arrowinset=0.0]{<-}(3.106667,1.5052781)(3.106667,-0.49472192)
\psline[linecolor=black, linewidth=0.06, arrowsize=0.2cm 2.0,arrowlength=1.4,arrowinset=0.0]{<-}(12.106667,-0.49472192)(12.106667,1.5052781)
\psline[linecolor=black, linewidth=0.06, linestyle=dotted, dotsep=0.10583334cm, arrowsize=0.2cm 2.0,arrowlength=1.4,arrowinset=0.0]{<-}(2.106667,-1.4947219)(0.10666702,-1.4947219)
\psline[linecolor=black, linewidth=0.06, linestyle=dotted, dotsep=0.10583334cm, arrowsize=0.2cm 2.0,arrowlength=1.4,arrowinset=0.0]{<-}(15.106667,-1.4947219)(13.106667,-1.4947219)
\psarc[linecolor=black, linewidth=0.06, dimen=outer](6.056667,-0.5447219){0.95}{163.9798}{279.5601}
\psarc[linecolor=black, linewidth=0.06, dimen=outer](13.056667,-0.5447219){0.95}{163.9798}{279.5601}
\psarc[linecolor=black, linewidth=0.06, dimen=outer](2.156667,-0.5447219){0.95}{270}0
\psarc[linecolor=black,linewidth=0.06,dimen=outer](9.156667,-0.5447219){0.95}{270}{0}
\psline[linecolor=black, linewidth=0.06, linestyle=dotted, dotsep=0.10583334cm](5.106667,1.5052781)(5.106667,2.505278)
\psline[linecolor=black, linewidth=0.06, linestyle=dotted, dotsep=0.10583334cm](10.106667,1.3052781)(10.106667,2.7052782)
\psline[linecolor=black, linewidth=0.06, linestyle=dotted, dotsep=0.10583334cm](3.106667,1.3052781)(3.106667,2.605278)
\psline[linecolor=black, linewidth=0.06, linestyle=dotted, dotsep=0.10583334cm](12.106667,1.5052781)(12.106667,2.7052782)
\rput(0.506667,0.5052781){\psscalebox{2.5 2.5}{$C_i$}}
\rput(7.606667,0.5052781){\psscalebox{2.5 2.5}{$C_j$}}
\rput(14.606667,0.5052781){\psscalebox{2.5 2.5}{$C_k$}}
\psdots[linecolor=black, dotstyle=o, dotsize=0.4](4.106667,-2.494722)
\psdots[linecolor=black, dotstyle=o, dotsize=0.4](11.106667,-2.494722)
\end{pspicture}
}
\end{center}
In this picture, the horizontal line represents a part of the
circumference of the pancake and the vertical lines the ribs.

It is  simple to see that removing a rib
adjacent to two different regions decreases $b(K)$ by
one. Since $b(K) > 1$ by assumption, there are at least two different
edge cycles, therefore such a rib exists. This finishes the proof. 
\end{proof}

Let us finally start the actual inductive proof of~(\ref{eq:1bis1}).
The cases when $g(K) \leq 2$ are analyzed in
Examples~\ref{sec:modular-envelopes-5}--\ref{sec:modular-envelopes-3}. Fix
$g \geq 3$, assume that we have proved~(\ref{eq:1bis1})
for all $K$'s with $g(K) < g$ and prove it for $K$ with $g(K) = g$.  
As in Lemma~\ref{sec:modular-envelopes-2} we distinguish two cases.

\vskip .5em

\noindent {\bf Case 1: $b(K) = 1$.} As there are no pancakes
with $b(K) = 1$ and $g(K) = 4$, in this case in fact $g(K) \geq 4$.
By Lemma~\ref{sec:modular-envelopes-2}.(i) and the
inductive assumption,
\begin{equation}
\label{eq:19}
\varpi_K = \xi_{u'u''}
\varpi\big([X_1u'],[u''X_2];g-1\big),
\end{equation}
where $X_1$, $X_2$ are ordered sets such that $C := \Leg(K) =
[X_1X_2]$. We must show
that the right hand side of~(\ref{eq:19}) 
does not depend on the particular choices of $X_1$ and $X_2$. The
choices are 
represented by a rib of a circle with the circumference decorated
by $C$ as in Figure~\ref{fig:cut}--left.\begin{figure}
\centering
\psscalebox{.25.25}
{ 
\begin{pspicture}(0,-4.2225)(34.66,6.2225)
\rput(-4,0){
\pscircle[linecolor=black, linewidth=0.22, dimen=outer](4.98,0.6025){4.7}
\psline[linecolor=black, linewidth=0.12](1.02,4.6625)(9.52,-3.4375)(9.52,-3.4375)
\psdots[linecolor=black, dotstyle=o, dotsize=0.7](1.82,3.8625)
\psdots[linecolor=black, dotstyle=o, dotsize=0.7](8.42,-2.4375)
\rput[b](1.78,5.0025){\psscalebox{4 4}{{$u'$}}}
\rput[t](8.38,-3.3975){\psscalebox{4 4}{{$u''$}}}
\rput(0.18,-2.7975){\psscalebox{4 4}{{$X_1$}}}
\rput(9.48,4.7025){\psscalebox{4 4}{{$X_2$}}}
}

\psline[linecolor=black, linewidth=0.12](13.58,-3.5975)(19.48,5.8025)(19.410587,5.8025)
\pscircle[linecolor=black, linewidth=0.22, dimen=outer](17.62,0.5625){4.7}
\psline[linecolor=black, linewidth=0.12](13.66,5.1225)(23.460613,-1.5765196)(13.691611,5.1021967)
\psdots[linecolor=black, dotstyle=o, dotsize=0.7](22.06,-0.5775)
\psdots[linecolor=black, dotstyle=o, dotsize=0.7](14.3,-2.5175)
\psdots[linecolor=black, dotstyle=o, dotsize=0.7](18.94,4.9425)
\psdots[linecolor=black, dotstyle=o, dotsize=0.7](14.88,4.3025)
\rput[bl](15.08,-2.5){\psscalebox{4 4}{{$v''$}}}
\rput[b](18.48,5.8025){\psscalebox{4 4}{{$v'$}}}
\rput[rt](21.5,-.8){\psscalebox{4 4}{{$u''$}}}
\rput[t](15,3.5){\psscalebox{4 4}{{$u'$}}}
\rput(16.08,6.1025){\psscalebox{4 4}{{$X$}}}
\rput(12,0.9025){\psscalebox{4 4}{{$Y$}}}
\rput(17.88,-5.1){\psscalebox{4 4}{{$Z$}}}
\rput(22.4,3.8025){\psscalebox{4 4}{{$U$}}}

\rput(4,0){
\pscircle[linecolor=black, linewidth=0.22, dimen=outer](29.96,0.5225){4.7}
\psline[linecolor=black, linewidth=0.12](27.783419,5.3822923)(27.800613,-4.6165195)(27.78,-4.596912)
\psline[linecolor=black, linewidth=0.12](30.24,5.8425)(32.140614,-4.4565196)(30.24613,5.8112864)
\psdots[linecolor=black, dotstyle=o, dotsize=0.7](32.02,-3.5375)
\psdots[linecolor=black, dotstyle=o, dotsize=0.7](27.76,4.6225)
\psdots[linecolor=black, dotstyle=o, dotsize=0.7](27.8,-3.5175)
\psdots[linecolor=black, dotstyle=o, dotsize=0.7](30.36,5.1225)
\rput(24.28,0.8025){\psscalebox{4 4}{{$Y$}}}
\rput(29,6.1025){\psscalebox{4 4}{{$X$}}}
\rput(34.38,4.4025){\psscalebox{4 4}{{$U$}}}
\rput(29.98,-5.1975){\psscalebox{4 4}{{$Z$}}}
\rput[b](26.7,5.0025){\psscalebox{4 4}{{$v'$}}}
\rput[t](26.5,-3.8975){\psscalebox{4 4}{{$v''$}}}
\rput[b](31.2,5.8025){\psscalebox{4 4}{{$u'$}}}
\rput[t](33.5,-3.75){\psscalebox{4 4}{{$u''$}}}
}
\end{pspicture}
}
\caption{\label{fig:cut}
One rib (left), two crossing ribs (middle) and two parallel
ribs (left).}
\end{figure}

Assume we have two different ribs, $C =[X'_1X'_2]$ and $C
=[X''_1X''_2]$.  They may either be crossing as in the middle of
Figure~\ref{fig:cut}, or parallel as in the rightmost picture of
Figure~\ref{fig:cut}.
The crossing case is parametrized by ordered sets $X,Y,Z,U$ such that
\[
X'_1 = {\it ZY},\ X'_2 = {\it XU},\ X''_1 = {\it YX} \ \hbox { and } \
    X''_2 = {\it UZ},
\] 
see Figure~\ref{fig:cut} again.
The element 
\[
\varpi\big([v''Yu''Zv'Uu'X];g-2\big) \in \nsP\bl[v''Yu''Zv'Uu'X];g-2\br
\]
is then the `equalizer' of the ribs,
which is expressed by the commutative square
\[
\xymatrix@C = -5em@R = 2em{  
& \varpi\big([v''Yu''Zv'Uu'X];g-2\big) \ar@{|->}[dl]_{\xi_{v'v''}}
\ar@{|->}[dr]^{\xi_{u'u''}}   & 
\\
\varpi\big([X'_1u''],[u'X'_2] ; g-1\big)\ar@{|->}[dr]_{\xi_{u'u''}} & & \
\varpi\big([X''_1v''],[v'X''_2] ; g-1\big).
\ar@{|->}[dl]^{\xi_{v'v''}}
\\
&\varpi_K&
}
\]
Therefore 
\[
 \xi_{u'u''} \varpi\big([X'_1u'],[u''X'_2];g-1\big) = \xi_{u'u''}
\varpi\big([X''_1u'],[u''X''_2];g-1\big) 
\]
as we needed to show.
Notice that we need to assume $g \geq 2$ in order the equalizer to exist.

The non-crossing case is parametrized by ordered sets $X,Y,Z,U$ such
that
\[
X'_1 = {\it ZYX},\ X'_2 = U,\ X''_1 = {\it XUZ} \ \hbox { and } \ X''_2 = Y.
\] 
The equalizer of these two choices is 
$\varpi\big([Xu'Zv'],[v''Y],[u''U];g-2\big)$ as the reader easily
verifies. This finishes the discussion of the $b(K) = 1$ case.

\vskip .5em

\noindent 
{\bf Case 2: $b(K) > 1$.}
Now $K$ is of type $(C_1,\ldots,C_b;g)$ with $b \geq 2$. 
Lemma~\ref{sec:modular-envelopes-2}.(ii) translates to the formula 
\begin{equation}
\label{eq:16bis}
\varpi_K =
\xi_{u',u''}\varpi\big(C_1,\ldots,\widehat{C_i},
\ldots,\widehat{C_j},\ldots,C_b,[u'X_iu''X_j];g-1\big),
\end{equation}
where $\widehat{\hphantom{-}}$ indicates the omission and $X_i$, $X_j$
are ordered sets such that $[X_i]= C_i$ and $[X_j] = C_j$.
As before we must  prove that the right hand side of~(\ref{eq:16bis}) 
does not depend on the particular choices of $i$, $j$ and ordered sets
$X_i$, $X_j$. So suppose that we have two different choices
\[
[X'_i] = C_{i'}, \ [X'_j] = C_{j'}, \ \hbox{ resp.\ } \ 
[X''_i] = C_{i''},\ 
[X''_j] = C_{j''}
\] 
for some ordered sets $X'_i,X'_j, X''_i, X''_j$
and $1 \leq i' < j' \leq b$,  $1 \leq i'' < j'' \leq b$. We
distinguish three cases.

\vskip .3em

\noindent 
{\it Case 2(i): $\{i',j'\} = \{i'',j''\}$.}
We may clearly assume that $i' \! = \!i''= \!1$, $j'\!=\!j''\!=\!2$. 
Since $C_k$'s with $k > 2$ do not affect calculations we will not
explicitly mention them.
We therefore have 
\[
C_1 = [X'_1] =  [X''_1], \ C_2 = [X'_2] =  [X''_2],
\]
which, as observed in Remark~\ref{zase_mam_nutkani}, happens if and
only if there are ordered sets $X,Y,Z,U$ such that
\[
X'_1 = XY,\ X'_2 = ZU,\ X''_1 = YX \  \hbox { and } \ X''_2 = UZ.
\]
One easily verifies that then
$\varpi\big([Yv''Zu'],[u''Uv'X];g-2\big)$ is a equalizer of these choices.

\vskip .3em

\noindent 
{\it Case 2(ii): the cardinality of $\{i',j',i'',j''\}$ is $3$.}
We may assume that $i'\!= \!1$, $j'\! =\! i''\! = \!2$, $j''\! = \!3$,
and neglect $C_k$'s with $k >3$. So we have two presentations
\begin{equation}
  \label{eq:16}
\varpi_K = \varpi\big([u'X_1u''X'_2],C_3; g-1\big) \ \hbox { and } \
\varpi_K = \varpi\big(C_1,[v'X''_2v''X_3]; g-1\big)
\end{equation}
in which $C_1 = [X_1],\ C_2 = [X'_2] = [X''_2]$ and $C_3 = [X_3]$. By
Remark~\ref{zase_mam_nutkani}, there are ordered sets $Y$, $Z$ such
that $X'_2 = YZ$ and $X''_2 = ZY$. One easily sees that
$\varpi\big(u'X_1u''Yv'X_3v''Z; g-2\big)$ is a equalizer for the two
presentations in~(\ref{eq:16}).
The last case is

\vskip .3em

\noindent 
{\it Case 2(iii): the cardinality of $\{i',j',i'',j''\}$ is $4$.}
This case is simple  so we leave its analysis to the reader.
This finishes the proof of Theorem~\ref{sec:modular-envelopes-1}.

\begin{remark}
The computations in this section, namely in
Example~\ref{sec:modular-envelopes-4}, can be used to lift the
commutativity assumption in \cite[Section~4.1]{Kaufmann-FeynmanII},
cf.\ namely Remark~4.1 of \cite{Kaufmann-FeynmanII}, proving that
TFT's as functors from the category of cobordisms to
vector spaces are equivalent to  Frobenius algebras.
\end{remark}

\section{Surface models of $\Mod(\Ass)$.}
\label{lazarev}

In this section we recall the approach which J.~Chuang and A.~Lazarev
used to prove their~\cite[Theorem~3.7]{chuang-lazarev:dual-fey} that
describes the modular envelope $\Mod(\Ass)$ via the set of isomorphism
classes of oriented surfaces with teethed holes.  We also present a
parallel approach due to R.M.~Kaufmann,
M.~Livernet and 
R.C.~Penner~\cite{kaufmann-livernet-penner,kaufmann-penner:NP06}.  
Our setup nicely conveys their ideas.  The above mentioned
authors of course worked in the category of ordinary operads, but what
they did in fact took place within \nsm\ operads.  In the second half
of this section we briefly mention a non-oriented modification due to
C.~Braun~\cite{braun}.  Throughout this section, the basic monoidal
category will be the cartesian category $\Set$ of sets.

Let $\underline{*}$ be the geometric \nsm\  module of
Example \ref{zase_mne_boli_v_krku_tentokrat_jsem_to_chytl_od_Andulky}.
Its generators $\underline *\(C\)$ will this time be visualized as
oriented cog wheels
\begin{equation}
\raisebox{-2.4em}{}
\label{Pozitri_se_sem_nastehuje_Andulka.}
\psscalebox{.3.3}
{

}
\end{center}
\caption{\label{zubata_plocha}The oriented surface with $b$ teethed boundaries and
  genus $G$.}
\end{figure}

By~(\ref{uplne_na_mne_masti}), the modular envelope
$\nsMod(\underline*_C)$ is the quotient of $\Free(\underline*) =
\Free\big(\nsF(\underline*_C)\big)$ by the relations that forget how a
concrete surface was build from the cog wheels. As in the proof
of~\cite[Theorem~3.7]{chuang-lazarev:dual-fey} we therefore identify,
referring to the results of~\cite{igusa}, $\nsMod(\underline*_C)$ with
the set of isomorphism classes of surfaces as in
Figure~\ref{zubata_plocha}. Since there is
only one isomorphism class for a given geometric
$(C_1,\ldots,C_b;g) \in \hbox{$\MultCyc \!\times\! \bbN$}$, we get~(\ref{Jarka_stale_bez_prace-b}).

\begin{remark}
\label{je_mi_trochu_lip}
Notice that, given $(C_1,\ldots,C_b;g) \in \MultCyc \!\times\! \bbN$,
there exists a surface as in Figure~\ref{zubata_plocha} if and only if
$(C_1,\ldots,C_b;g)$ is geometric. This shall explain our terminology.
\end{remark}

Other incarnations of the surface model of the operad
$\nsMod(\underline*_C)$ appeared also e.g.\ in
~\cite{kaufmann-livernet-penner,kaufmann-penner:NP06}. Let us outline its basic features.
Our exposition will be merely sketched and also the notation
will serve only for the purposes of this section.
Denote by $\Wind$  the modular operad
of isomorphism classes of surfaces as in Figure~\ref{zubata_plocha},
with the operad structure given by  glueing the tips of the
teeth. This operad is a version of the operad  
of isomorphism classes of windowed surfaces 
introduced in~\cite[Section~1]{kaufmann-penner:NP06}, with the r\^ole
of teeth played by marked points on the boundary. 

The operad $\Arc$ of isotopy classes of arc families in windowed
surfaces, see~\cite[Section~1]{kaufmann-livernet-penner} or
\cite[Section~2]{kaufmann-penner:NP06}, contains the suboperad
$\Triang$ of arc families whose complementary regions triangulate the
underlying surface.  One has the commutative diagram
\[
\xymatrix{\nsMod(\underline*_C) \ar[rr]^\omega_\cong & &  {\Wind}   &
\\
& {\Triang}\ \ar@{->>}[lu]_\beta \ar@{->>}[ru]^\alpha \ar@{^{(}->}[rr]&& 
{\Arc}\ar@{->>}[lu]_\gamma
}
\]
in which $\gamma : \Arc \to \Wind$ associates to an arc family its
underlying surface, and $\alpha : \Triang \to \Wind$ is the
restriction of $\gamma$.  Both maps are surjective because every
surface has a triangulation. The existence of the third map $\beta:
\Triang \to \nsMod(\underline*_C)$ follows from the freeness $\Triang$
while $\omega$ exists because $\Wind$ contains $\underline*_C$ as a
cyclic suboperad. The morphism $\beta$ is surjective as well, since
$\Triang$ contains the generators of the terminal \ns\ cyclic operad
$\underline*_C$.

To establish that $\omega$ is an isomorphism, one needs to know that 
the kernel of $\alpha$ contains the kernel of $\beta$. In geometric terms
this means that all triangulations of a given surface differ by a
sequence of `elementary moves' corresponding to the axioms of modular
operads.

Despite its conceptual clarity, the above approaches relied on a
rather deep result of~\cite{igusa}  that the classifying space
of the category of ribbon graphs of genus $G$ with $b$ boundary
components is homeomorphic to the moduli space
of Riemann surfaces of the same genus and the same number of 
boundary components. We therefore still believe that a direct combinatorial
description of $\Mod(\Ass)$ given in \cite{doubek:modass} or here has some merit.

\begin{remark}
  There are three equivalent pictures of the surface model for
  $\nsMod(\underline*_C)$: teethed surfaces in
  Figure~\ref{zubata_plocha} and the windowed surfaces with marked points on
  the boundary, resp.\ appropriate subdivisions of these surfaces
  given by arc families. The third, dual picture, closer to the
  approach of the present article, uses graphs whose vertices
  correspond to the regions of the subdivision and the edges
  to the their common boundaries. This remark should 
  relate e.g.~\cite{barannikov,kaufmann:dim_vs_genus,kaufmann-penner:NP06}
  where these objects were used in the operadic context,
  to the present article.
\end{remark}

A non-oriented variant of the above calculations starts with
the cyclic operad  $*_D$ whose component $*_D\(S\)$ consists of
cog wheels  whose cogs are indexed by the finite set $S$ and have their tips
decorated by  arrows, as 
\begin{center}
\psscalebox{.3.3}
{
\begin{pspicture}(0,-3.926391)(7.8518815,3.926391)
\psarc[linecolor=black, linewidth=0.1, dimen=outer](2.43,3.6035507){0.8}{206}{20}
\psarc[linecolor=black, linewidth=.1, dimen=outer](0.23,-1.1964494){0.8}{294}{104}
\psarc[linecolor=black, linewidth=.1, dimen=outer](5.63,3.5035505){0.8}{158.33264}{329.5243}
\psarc[linecolor=black, linewidth=.1, dimen=outer](0.33,1.6035506){0.8}{250}{63}
\psarc[linecolor=black, linewidth=0.1, dimen=outer](7.63,1.3035506){0.8}{112.23932}{282.6804}
\psarc[linecolor=black, linewidth=0.1, dimen=outer](2.03,-3.3964493){0.8}{333}{149}
\psarc[linecolor=black, linewidth=0.1, dimen=outer](5.13,-3.6964495){0.8}{21.317913}{192}
\psarc[linecolor=black, linewidth=.1, dimen=outer](7.43,-1.6964494){0.8}{66.55701}{246.0375}
\psline[linecolor=black, linewidth=0.06, arrowsize=0.3cm 2.0,arrowlength=1.4,arrowinset=0.0]{->}(6.33,3.1035507)(7.33,2.0035505)
\psline[linecolor=black, linewidth=0.06, arrowsize=0.3cm 2.0,arrowlength=1.4,arrowinset=0.0]{->}(7.73,-0.89644945)(7.83,0.60355055)
\psline[linecolor=black, linewidth=0.06, arrowsize=0.3cm 2.0,arrowlength=1.4,arrowinset=0.0]{->}(5.83,-3.3964493)(7.13,-2.3964493)
\psline[linecolor=black, linewidth=0.06, arrowsize=0.3cm 2.0,arrowlength=1.4,arrowinset=0.0]{->}(4.33,-3.8964493)(2.73,-3.7964494)
\psline[linecolor=black, linewidth=0.06, arrowsize=0.3cm 2.0,arrowlength=1.4,arrowinset=0.0]{->}(0.53,-1.9964495)(1.33,-2.9964495)
\psline[linecolor=black, linewidth=0.06, arrowsize=0.3cm 2.0,arrowlength=1.4,arrowinset=0.0]{->}(0.03,0.90355057)(0.03,-0.39644945)
\psline[linecolor=black, linewidth=0.06, arrowsize=0.3cm 2.0,arrowlength=1.4,arrowinset=0.0]{->}(0.73,2.3035505)(1.73,3.3035505)
\psline[linecolor=black, linewidth=0.06, arrowsize=0.3cm 2.0,arrowlength=1.4,arrowinset=0.0]{->}(4.83,3.8035505)(3.13,3.9035506)
\end{pspicture}
}
\end{center}
Clearly, if $S$ has $n$ elements, $*_D\(S\)$ has $2^{n-1}(n-1)!$
elements. The structure operations glue the tips of the
cogs in such a way that the arrows go in the opposite directions, as in
\begin{center}
\psscalebox{.35 .35} 
{
\begin{pspicture}(0,-3.1030016)(13.636998,3.1030016)
\psarc[linecolor=black, linewidth=0.1, dimen=outer](12.5057335,-1.400521){0.8}{267}{114}
\rput{-5.0}(-0.060016807,1.1135751)
{\psarc[linecolor=black, linewidth=.1, dimen=outer](12.722523,1.244093){0.8}{243}{90}}
\psarc[linecolor=black, linewidth=.1, dimen=outer](11.346681,1.4704981){0.8}{112.23932}{267.8539}
\rput{5.0}(-0.07798805,-0.9674769){\psarc[linecolor=black, linewidth=.1, dimen=outer](11.040439,-1.3768485){0.8}{78.5253}{246.0375}}
\psline[linecolor=black, linewidth=0.1](11.146681,0.6704981)(12.346681,0.57049805)
\psline[linecolor=black, linewidth=0.1](11.04668,-0.529502)(12.346681,-0.62950194)
\psline[linecolor=black, linewidth=0.1, linestyle=dotted, dotsep=0.10583334cm](12.846681,2.070498)(13.54668,3.070498)
\psline[linecolor=black, linewidth=0.1, linestyle=dotted, dotsep=0.10583334cm](13.146681,-3.229502)(12.54668,-2.229502)
\psline[linecolor=black, linewidth=0.1, linestyle=dotted, dotsep=0.10583334cm](10.04668,-3.129502)(10.74668,-2.129502)
\psline[linecolor=black, linewidth=0.1, linestyle=dotted, dotsep=0.10583334cm](11.04668,2.1704981)(10.44668,3.1704981)
\psarc[linecolor=black, linewidth=.1, dimen=outer](2.4057336,-1.400521){0.8}{267.07745}{95}
\rput{-5.0}(-0.098450355,0.23330204){\psarc[linecolor=black, linewidth=.1, dimen=outer](2.6225226,1.244093){0.8}{255.75034}{90}}
\psarc[linecolor=black, linewidth=0.1, dimen=outer](1.2466805,1.4704981){0.8}{112.23932}{267.8539}
\rput{5.0}(-0.1164216,-0.08720393){\psarc[linecolor=black, linewidth=0.1, dimen=outer](0.94043833,-1.3768485){0.8}{72.48264}{246.0375}}
\psline[linecolor=black, linewidth=0.06, arrowsize=0.3cm 2.0,arrowlength=1.4,arrowinset=0.0]{->}(1.1466805,-0.62950194)(1.2466805,0.6704981)
\psline[linecolor=black, linewidth=0.06, arrowsize=0.3cm 2.0,arrowlength=1.4,arrowinset=0.0]{->}(2.4086409,0.48962328)(2.3466804,-0.62950194)
\psline[linecolor=black, linewidth=0.1, linestyle=dotted, dotsep=0.10583334cm](2.7466805,2.070498)(3.4466805,3.070498)
\psline[linecolor=black, linewidth=0.1, linestyle=dotted, dotsep=0.10583334cm](3.0466805,-3.229502)(2.4466805,-2.229502)
\psline[linecolor=black, linewidth=0.1, linestyle=dotted, dotsep=0.10583334cm](-0.05331955,-3.129502)(0.6466805,-2.129502)
\psline[linecolor=black, linewidth=0.1, linestyle=dotted, dotsep=0.10583334cm](0.9466804,2.1704981)(0.34668046,3.1704981)
\rput(0.14668044,0.17049804){\psscalebox{2.5 2.5}{$u$}}
\rput(3.34604,-0.12950195){\psscalebox{2.5 2.5}{$v$}}
\rput(6.804,0.17049804){\psscalebox{3 3}{{$\stackrel{\raisebox{.2em}{$\ooo uv$}}{\longmapsto}$}}}
\end{pspicture}
}
\end{center}
The operad $*_D$ is the {M\"obiusisation\/} \cite[Definition
3.32]{braun} of the terminal cyclic operad $*_C$, the subscript $D$
referring to the dihedral structure \cite[Section~3]{markl-remm:JA06}
that $*_D$ carries.  Algebras over its linearization $\Span(*_D)$ are
associative algebras with involution \cite[Proposition~3.9]{braun}.
The Chuang-Lazarev approach applies also to this situation, except
that the sewing may not preserve the orientations now, see
Figure~\ref{fig:orandnonor}--right. Indeed, C.~Braun (who of course
worked in $\k-Mod$ not in $\Set$) proved

\begin{theorem}[{\cite[Theorem~3.10]{braun}}]
  The component \, $\Mod(*_D)\(S;g\)$ \, of the modular envelope $\Mod(*_D)$ is
  the set of isomorphism classes of (not necessarily oriented) surfaces
 with $b$ teethed holes whose teeth are labeled by $S$, with
  $m$ handles and $u$ crosscaps such that $g = 2m + b +u-1$.
\end{theorem}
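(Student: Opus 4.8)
The plan is to imitate, step for step, the proof of the isomorphism~(\ref{Jarka_stale_bez_prace-b}), with the oriented cog wheels~(\ref{Pozitri_se_sem_nastehuje_Andulka.}) replaced by the arrow-decorated dihedral wheels generating $*_D$, and with the orientation-preserving gluing replaced by the possibly orientation-reversing gluing depicted in Figure~\ref{fig:orandnonor}--right. By the symmetric analog of the presentation~(\ref{uplne_na_mne_masti}), the modular envelope $\Mod(*_D)$ is the quotient of the free modular operad $\Freesym(*_D)$ by the operadic ideal that forgets how a given configuration was assembled from the generating wheels. An element of $\Freesym(*_D)\(S;g\)$ is a connected graph whose vertices carry the wheels $*_D\(\Leg(v)\)$ and whose edges glue cog tips so that the two decorating arrows point in opposite directions. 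Reading each wheel as a disk and each edge as the identification of a pair of boundary arcs, such a graph is exactly a cell decomposition of a compact, not necessarily orientable, surface whose $b$ teethed boundary circles are labelled by $S$.

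I would first settle the topological bookkeeping. Since the local genera of the wheels vanish, the operadic genus equals the first Betti number of the underlying graph, $g(\Gamma)=b_1(\Gamma)$. Computing the Euler characteristic of the assembled surface from its decomposition and comparing it with the standard invariants of a compact surface with $m$ handles, $u$ crosscaps and $b$ boundary components gives the asserted relation $g=2m+b+u-1$; for $u=0$ this is precisely~(\ref{eq:jarkoprijed}), and each crosscap raises $g$ by one. Next I would interpret the passage to the quotient: two decompositions represent the same element of $\Mod(*_D)\(S;g\)$ exactly when the surfaces they decompose are homeomorphic by a homeomorphism preserving the labelled teeth. Granting this, the classification theorem for compact surfaces with boundary shows that the isomorphism classes are faithfully recorded by the discrete data $(m,u,b)$ together with the distribution of $S$ among the boundaries, which is the content of the theorem.

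The hard part will be the same as in the orientable case: proving that the kernel of the ``assemble a surface'' map is generated by the modular relations, i.e.\ that any two cellular decompositions of a fixed non-orientable surface are joined by a finite chain of the elementary moves encoded in the axioms of Definition~\ref{modular}. In the oriented setting this rested on Igusa's theorem~\cite{igusa} identifying the classifying space of ribbon graphs of genus $G$ with $b$ boundaries with the corresponding moduli space; here one needs its non-oriented incarnation, for M\"obius ribbon graphs and the moduli space of non-orientable surfaces, which is exactly the input Braun uses in~\cite{braun}. A minor but genuine combinatorial point to watch is Dyck's relation, by which a handle together with a crosscap equals three crosscaps: once $u\ge 1$ the pairs $(m,u)$ and $(m-1,u+2)$ describe homeomorphic surfaces, so the genuinely distinct classes are the orientable ones (where $u=0$ and $g-b+1$ is forced to be even) and the non-orientable ones, and this has to be matched carefully against the single constraint $g=2m+b+u-1$.
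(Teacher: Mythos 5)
The paper offers no proof of this statement: it is quoted directly from Braun's Theorem~3.10, accompanied only by the one-line remark that the Chuang--Lazarev surface-model argument of this section carries over once the sewing of cog wheels is allowed to reverse orientation (Figure~\ref{fig:orandnonor}--right). Your sketch is a faithful expansion of exactly that remark and of Braun's actual route: the Euler-characteristic bookkeeping giving $g=2m+b+u-1$ is correct, your caveat about Dyck's relation (so that for fixed $g$ and $b$ all non-orientable pairs $(m,u)$ with $2m+u=g-b+1$ describe one homeomorphism class) is a genuine point the bare statement glosses over, and you rightly isolate the only hard step --- that any two cellular decompositions of a fixed, possibly non-orientable, labelled surface are linked by the elementary moves encoded in the axioms of Definition~\ref{modular} --- as a non-orientable analogue of Igusa's theorem, which is precisely the external input Braun imports. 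So the proposal matches the cited proof in approach; just note that, unlike the direct combinatorial induction the paper carries out for~(\ref{Jarka_stale_bez_prace-b}), nothing here is established without that topological input, so your argument is a reduction to Braun's key lemma rather than a self-contained proof.
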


As we theorized in the Introduction, we believe that $\Mod(*_D)$ is
(related to) the terminal operad in a suitable category of dihedral operads.

\appendix

\section{Cyclic and modular operads}
\label{appendix}

There are two  versions of biased definitions of
operads. The skeletal version has natural numbers as
the arities, in the non-skeletal the arities are finite
sets. We recall, following \cite[Section~2]{doubek:modass}, the
non-skeletal definitions of classical cyclic and modular operads.

\begin{definition}
A {\em cyclic module\/} is a functor
$
E : \Fin  \rightarrow \ttM
$
from the category of finite sets and their isomorphisms to 
our fixed symmetric monoidal category $\ttM$. 
\end{definition}

\begin{definition}
\label{cyclic_Jarka_nepise}
A {\em cyclic  operad\/} in $\ttM = (\ttM,\ot,1)$  is a cyclic module
\[
\oP = \big\{\oP\(S\) \in \ttM;\  S   \in  \Fin \big\}
\]
together with morphisms (compositions)
\begin{equation}
\label{eq:7_Jarka_nepise}
\ooo{u}{v}:\oP\(S'\)
\otimes \oP\(S''\)  \to \oP\bl S' \cup S'' \setminus \{u,v\}\br
\end{equation}
defined for arbitrary disjoint   
sets $S'$ and $S''$ with elements $u \in S'$,
$v \in S''$.
These data are required to satisfy the following axioms.
\begin{enumerate}
\itemindent -1em
 \itemsep .3em 
\item [(i)]
For $S'$, $S''$ and $u$, $v$ as in~(\ref{eq:7_Jarka_nepise}),
one has the equality
\[
\ooo{u}{v} =  \ooo{v}{u} \tau
\]
of maps $\oP\(S'\)\otimes \oP\(S''\) \to
\oP\bl S'\cup S'' 
\setminus  \{u,v\}\br$, where $\tau$ is  the symmetry
constraint in $\ttM$.

\item  [(ii)]
For mutually disjoint   sets
  $S_1,S_2,S_3$, 
and $a \in S_1$, $b,c \in
  S_2$, $b \not= c$, $d  \in S_3$, one has the equality
\[
\ooo ab (\id \ot \ooo cd)  = \ooo cd (\ooo ab \ot
\id)
\] 
of maps $\oP\(S_1\) \otred \oP\(S_2\) \otred  \oP\(S_3\) \to 
\oP\bl S_1 \cupred S_2 \cupred S_3 \setminus \{a,b,c,d\}\br$.

\item  [(iii)]
For arbitrary isomorphisms $\rho : S'\to D'$
  and $\sigma : S''\to D''$ of   sets and
  $u$, $v$ as in~(\ref{eq:7_Jarka_nepise}), one has the equality
\[
\oP\bl\rho|_{S'\setminus \{u\}}\cup\sigma|_{S''\setminus
  \{v\}}\br 
\ooo{u}{v} =
\ooo{\rho(u)}{\sigma(v)} \ \big(\oP\(\rho\)\ot\oP\(\sigma\)\big)
\]
of maps
$\oP\(S'\)\otimes \oP\(S''\) \to
\oP\bl D'\cup D'' 
\setminus  \{\rho(u),\sigma(v)\}\br$.
\end{enumerate} 
\end{definition}

The category $\Fin$ of finite sets is equivalent to its full skeletal
subcategory $\Fin_{\it sk}$ whose objects are the sets $[n] := \{1,\dots,n\}$, $n \geq
0$, with $[0]$ interpreted as the empty set~$\emptyset$. The
components of the skeletal
version of $\calP$ are
\[
\oP(n) := \oP\bl [n\!+\!1]\br, \ n \geq -1,
\]
with the induced action of the
symmetric group $\Sigma_{n+1} = {\rm Aut}\big([n\!+\!1]\big)$. The
structure operations
\[
\ooo ij:  \oP(m) \ot \oP(n) \to \oP(m\!+\!n\!-\! 1),\
\ 0 \leq i \leq m, \ \ 0 \leq j \leq n,
\] 
are induced from the equivalence of $\Fin$ with
$\Fin_{\it sk}$. 

Notice that we allow also the component $\oP\(\,\emptyset\,\) = \oP(-1)$ and the
operation
\[
\ooo uv : \oP \bl \stt u \br \ot  \oP \bl \stt v \br \to
\oP\(\,\emptyset\,\) \hbox { resp.} \ \ooo 00 : \calP(0) \ot \calP(0)
\to \calP(-1),
\]
while the original
definition~\cite[Theorem~2.2]{getzler-kapranov:CPLNGT95} always
requires an `output,' i.e.\ the arities must be non-empty sets (or $n
\geq 0$ in the skeletal $\oP(n)$). We do not demand operadic units.

\begin{definition}
A {\em modular module\/} is a functor
\[
E : \Fin \times \bbN  \rightarrow \ttM,
\]
where the natural numbers $\bbN = \{0,1,2,\ldots\}$ are considered as a
discrete category.
\end{definition}

\begin{definition}
\label{modular}
A { modular operad\/} in $\ttM = (\ttM,\ot,1)$  is a  modular module
\begin{equation}
\label{Jaruska_by_uz_mohla_napsat}
\oP = \big\{\oP\(S;g\) \in \ttM;\  (S,g)   \in  \Fin \times \bbN \big\}
\end{equation}
together with morphisms (compositions)
\[
\ooo{u}{v}:\oP\(S';g'\)
\otimes \oP\(S'';g''\)  \to \oP\bl S'\cup S'' \setminus \{u,v\};g'+g''\br
\]
defined for arbitrary disjoint   
sets $S'$ and $S''$ with elements $u \in S'$,
$v \in S''$, and contractions
\[
\xi_{uv} = \xi_{vu} : \oP\(S;g\) \to \oP\bl S \setminus \{u,v\};g+1\br
\]
given for any   
set $S$ and distinct elements $u,v \in S$.
These data are required to satisfy the following axioms.
\begin{enumerate}
\itemindent -1em
 \itemsep .3em 
\item [(i)]
For $S'$, $S''$ and $u$, $v$ as in~(\ref{Jaruska_by_uz_mohla_napsat}),
one has the equality
\[
\ooo{u}{v} =  \ooo{v}{u} \tau
\]
of maps $\oP\(S';g'\)\otimes \oP\(S'';g''\) \to
\oP\bl S'\cup S'' 
\setminus  \{u,v\};g'+g''\br$.

\item  [(ii)]
For mutually disjoint   sets
  $S_1,S_2,S_3$, 
and $a \in S_1$, $b,c \in
  S_2$, $b \not= c$, $d  \in S_3$, one has the equality
\[
\ooo ab (\id \ot \ooo cd)  = \ooo cd (\ooo ab \ot
\id)
\] 
of maps $\oP\(S_1;g_1\) \otred \oP\(S_2;g_2\) \otred  \oP\(S_3;g_3\) \to 
\oP\bl S_1 \cupred S_2 \cupred S_3 \setminus \{a,b,c,d\};
g_1\!+\!
g_2\! +\! g_3\br$.

\item  [(iii)]
For a   set $S$ and mutually distinct $a,b,c,d \in S$,
one has the equality
\[
\xi_{ab} \ \xi_{cd} = \xi_{cd} \ \xi_{ab}
\]
of maps $\oP\(S;g\) \to \oP\bl S \setminus \{a,b,c,d\};g+2\br$.

\item  [(iv)]
For   sets $S', S''$ and distinct $a,c \in
S'$, $b,d \in S''$, one has the equality
\[
\xi_{ab} \ \ooo{c}{d} = \xi_{cd} \ \ooo{a}{b}
\]
of maps $\oP\(S' \cup S'';g\) \to \oP\bl S' \cup S'' \setminus
\{a,b,c,d\};g+1\br$.

\item [(v)] For   sets $S', S''$ and
  mutually distinct $a,c,d \in S'$, $b \in S''$, one has the equality
\[
\ooo{a}{b} \ (\xi_{cd}\ot\id) = \xi_{cd} \ \ooo{a}{b}
\]
of maps $\oP\(S' \cup S'';g\) \to \oP\bl S' \cup S'' \setminus
\{a,b,c,d\};g+1\br$.

\item  [(vi)]
For arbitrary isomorphisms $\rho : S'\to D'$
  and $\sigma : S''\to D''$ of   sets and
  $u$, $v$ as in~(\ref{eq:7}), one has the equality
\[
\oP\bl\rho|_{S'\setminus \{u\}}\cup\sigma|_{S''\setminus
  \{v\}}\br 
\ooo{u}{v} =
\ooo{\rho(u)}{\sigma(v)} \ \big(\oP\(\rho\)\ot\oP\(\sigma\)\big)
\]
of maps
$\oP\(S';g'\)\otimes \oP\(S'';g''\) \to
\oP\bl D'\cup D'' 
\setminus  \{\rho(u),\sigma(v)\};g'+g''\br$.

\item  [(vii)]
For $S$, $u$, $v$ as in~(\ref{eq:8}) and an isomorphism $\rho :
S \to D$ of   sets, one has the equality
\[
\oP\bl\rho|_{D \setminus \{\rho(u),\rho(v)\}}\br \ \xi_{ab} = 
\xi_{\rho(u)\rho(v)}\oP\(\rho\)
\]
of maps
$\oP\(S;g\) \to \oP\bl S \setminus \{\rho(u),\rho(v)\};g+1\br$.
\end{enumerate} 
\end{definition}

Informally, cyclic operads are modular operads without the
contractions and the genus grading. In the seminal
paper \cite{getzler-kapranov:CompM98} where modular operads were
introduced, the {\em stability\/} demanding that
\[
\oP\(S;g\) = 0 \ \hbox { if $\card(S) < 3$ and $g=0$, or $\card(S) =0$ and $g=1$}
\] 
was assumed, but we do not require this property.  As a matter of
fact, our main examples of \ns\ modular operads are not stable, though
stable versions of of our results can easily be formulated and proved.

\bibliography{b}
\def\cprime{$'$}\def\cprime{$'$}

\end{document}